\documentclass[12pt]{article}
\usepackage{amssymb}
\usepackage{amsmath}
\usepackage{amsthm}
\usepackage{authblk}
\usepackage{color}
\usepackage{comment}
\usepackage{geometry}
\usepackage{graphicx}

\makeatletter
	\@addtoreset{equation}{section}
\makeatother

\geometry{
	hmargin={25mm,25mm},
	vmargin={25mm,30mm}   % <-- for arXiv
}

% for nicer spacing around brackets:
\let\originalleft\left
\let\originalright\right
\renewcommand{\left}{\mathopen{}\mathclose\bgroup\originalleft}
\renewcommand{\right}{\aftergroup\egroup\originalright}

%----------------------------
\begin{document}

\newcommand\cN{\mathcal{N}}
\newcommand\cP{\mathcal{P}}
\newcommand\cQ{\mathcal{Q}}
\newcommand\cS{\mathcal{S}}
\newcommand{\rD}{{\rm D}}
\newcommand{\ee}{\varepsilon}

\newcommand{\myStep}[2]{{\bf Step #1} --- #2\\}

\newtheorem{theorem}{Theorem}
\newtheorem{corollary}[theorem]{Corollary}
\newtheorem{lemma}[theorem]{Lemma}
\newtheorem{proposition}[theorem]{Proposition}

\theoremstyle{definition}
\newtheorem{definition}{Definition}

\theoremstyle{remark}
\newtheorem{remark}{Remark}[section]

\title{From two-dimensional continuous maps to one-dimensional discontinuous maps:
a novel reduction explaining complex bifurcation structures in piecewise-linear families of maps.}
\author[1]{D.J.W.~Simpson\thanks{d.j.w.simpson@massey.ac.nz}}
\author[2]{V.~Avrutin}
\affil[1]{School of Mathematical and Computational Sciences, Massey University, Palmerston North 4410, New Zealand}
\affil[2]{Institute for Systems Theory and Automatic Control, University of Stuttgart, Pfaffenwaldring 9, 70550 Stuttgart, Germany}
\maketitle

\begin{abstract}

Piecewise-linear maps describe dynamical phenomena that switch between distinct states and readily generate complex bifurcation structures due to their strong nonlinearity. We show that two-dimensional continuous piecewise-linear maps near certain codimension-two homoclinic bifurcations are well approximated by a three-parameter family of one-dimensional maps. Each member of the one-dimensional family is discontinuous, because the family is constructed from the first return of iterates to a subset of phase space, and comprised of infinitely many linear pieces, where each piece corresponds to a fixed number of iterations near the saddle associated with the homoclinic bifurcation. The one-dimensional family exhibits period-incrementing, period-adding, bandcount-incrementing, and bandcount-adding structures (all typical for two-piece maps), as well as unique features caused by orbits repeatedly visiting more than two pieces of the map. These structures carry through to the two-dimensional maps with only minor differences in the arrangement of the bifurcations developing with the distance from the codimension-two bifurcations. This leads to a novel and vivid elucidation of the dynamics of the two-dimensional border-collision normal form.

\end{abstract}

%===============================================================================
\section{Introduction}
\label{sec:}

The purpose of this paper is to advance our understanding of 
the discrete-time dynamics $(x,y) \mapsto f(x,y)$, where
\begin{equation}
f(x,y) =
\begin{cases}
\begin{bmatrix} \tau_L x + y + 1 \\ -\delta_L x \end{bmatrix}, & x \le 0, \\[4mm]
\begin{bmatrix} \tau_R x + y + 1 \\ -\delta_R x \end{bmatrix}, & x \ge 0,
\end{cases}
\label{eq:f}
\end{equation}
and $\tau_L, \delta_L, \tau_R, \delta_R \in \mathbb{R}$ are parameters.
The family \eqref{eq:f} is the two-dimensional border-collision normal form,
except the border-collision parameter, often denoted $\mu$, has been scaled to $1$.
Any continuous, two-piece, piecewise-linear map on $\mathbb{R}^2$ satisfying a certain non-degeneracy condition
can be converted to \eqref{eq:f} via an affine change of coordinates \cite{NuYo92,Si23e}.
Such maps provide leading-order approximations to Poincar\'e maps
of piecewise-smooth ODE systems \cite{DiBu08,Si16},
particularly Filippov systems modelling mechanical apparatus with stick-slip friction \cite{DiKo03,SzOs08},
and hybrid systems modelling electrical devices with switching control strategies \cite{ZhMo03}.
The maps are also used as mathematical models, especially in economics where discrete-time models are preferred
and piecewise-smooth functions reflect the use of different investment strategies,
for example, under different circumstances \cite{PuSu06}.
Families of piecewise-linear maps, such as the Lozi family \cite{Lo78}, which is a subfamily of \eqref{eq:f},
are also used as test-beds for exploring and advancing theoretical aspects of chaos theory.

We are most interested in the {\em attractors} of \eqref{eq:f}, as these govern the long-term behaviour of typical orbits.
In principle, the parameter space of \eqref{eq:f} can be divided into regions according to the nature of its attractors,
where region boundaries are bifurcations at which attractors are created or destroyed or change in a fundamental way.
This has motivated many studies, e.g.~\cite{AvSc12,AvZh16,BaGr99,BaYo98,FaSi23,PaBa02,SiMe08b,SuAv22},
but such a division can probably never be fully achieved because parameter space is $\mathbb{R}^4$ (which is large),
and the dynamics of \eqref{eq:f} can be stupendously complicated
(for every $N \ge 1$ there exist open parameter regions in which \eqref{eq:f} has $N$ coexisting stable periodic solutions \cite{Si14},
and open parameter regions in which \eqref{eq:f} has $N$ coexisting chaotic attractors \cite{Si24c}).

Thus, rather than attempting a complete characterisation of the attractors across parameter space,
it is more valuable to characterise broad aspects of the bifurcation patterns that dominate parameter space.
One such pattern is the distinctive sausage-string geometry
of regions where the map has a stable period-$p$ solution, for some fixed $p$ \cite{Si17c,SuGa08,ZhMo06b}.
This geometry arises when the dynamics is strongly rotational,
and occurs because the piecewise-linearity
forces periodic solutions to cross the switching line
only at codimension-two shrinking points \cite{Si24d}.
Another pattern concerns {\em robust chaos} \cite{BaYo98,Gl17}. 
Regions of robust chaos can be divided into subregions according to the number of connected components of the attractor.
Where these numbers are powers of two, the organisation of the subregions
is dictated by a simple renormalisation operator \cite{GhSi22,GhMc24}.
Third, when a period-$p$ solution loses stability by attaining a stability multiplier of $-1$,
it does not experience a flip (or period-doubling) bifurcation because \eqref{eq:f} lacks the nonlinear terms necessary to
generate a local period-$2 p$ solution.
Instead, this often triggers a sequence
of three bifurcations whereby the attractor changes from period-$p$ to a chaotic attractor with $2 p$ connected components,
then to a chaotic attractor with $p$ connected components,
and then to a chaotic attractor with one connected component.
This sequence is well understood for one-dimensional maps \cite{NuYo95,SuAv16}
and occurs analogously in higher dimensions.

%%%%%%%%%%%%%%%%%%%%%%%%%%%%%%%%%%%%%%%%%%%%%%%%%%%%%%%%%%%%%%%%%%%%%%%%%%%%%%%%%%%%%%%%%%%%%%%%%%%%
\begin{figure}[b!]
\begin{center}
\includegraphics[width=7cm]{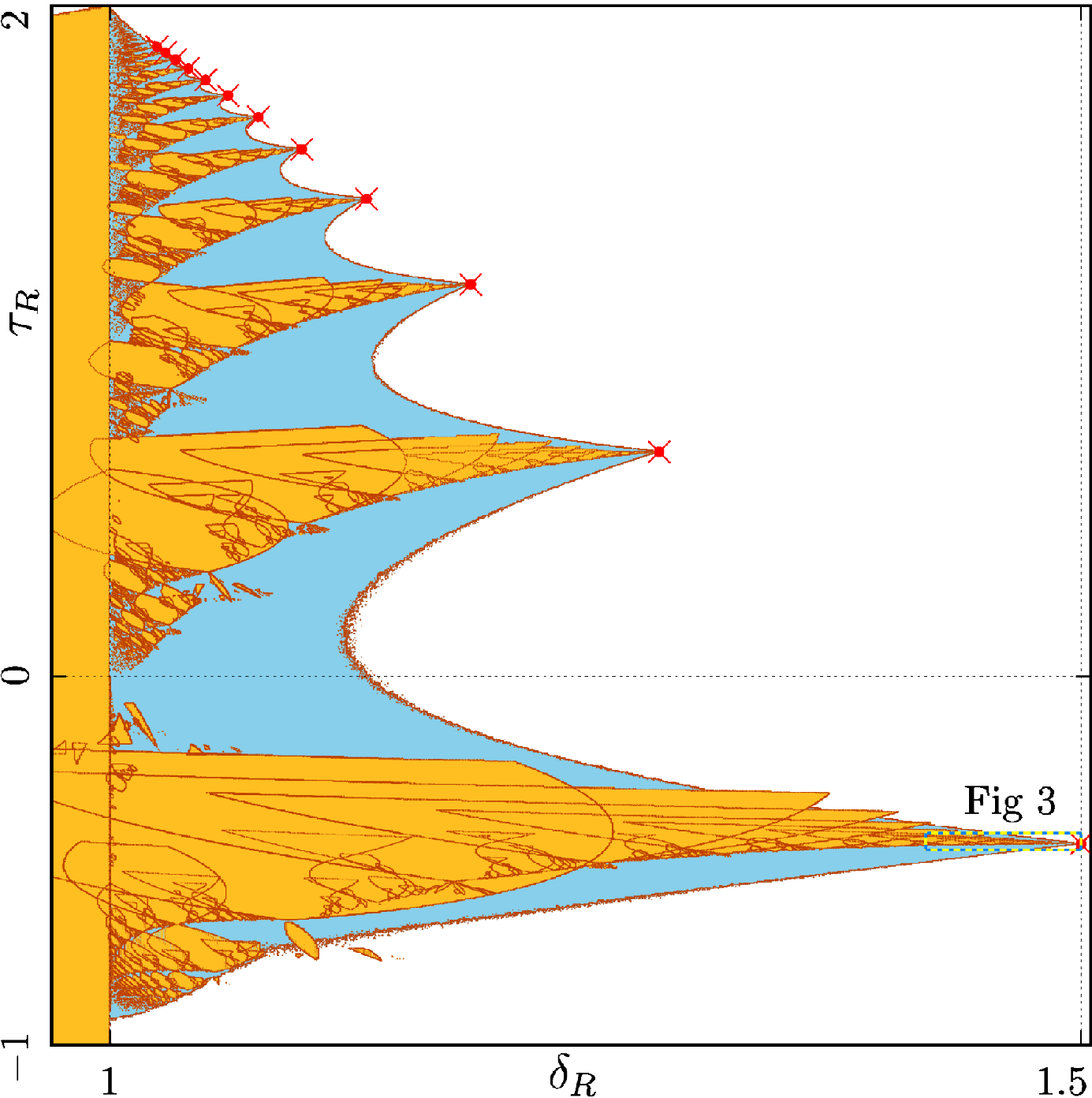}
\caption{
A large-scale bifurcation set of the
two-dimensional border-collision normal form $f$, given by \eqref{eq:f}, with $\tau_L = 2$ and $\delta_L = 0.75$.
In yellow regions $f$ has a periodic attractor,
in blue regions $f$ has a chaotic attractor,
and in white regions $f$ has no attractor.
The orange curves are boundaries of regions where $f$ has a periodic attractor of fixed period.
The red X's indicate parameter points of subsumed homoclinic connections.
Refer to \S\ref{sec:numericalMethods} for a description of the numerical methods used to generate this figure.
\label{fig:bifSetLargeScale}
} 
\end{center}
\end{figure}
%%%%%%%%%%%%%%%%%%%%%%%%%%%%%%%%%%%%%%%%%%%%%%%%%%%%%%%%%%%%%%%%%%%%%%%%%%%%%%%%%%%%%%%%%%%%%%%%%%%%

Fig.~\ref{fig:bifSetLargeScale} illustrates a fourth pattern.
From each node marked with a red X there issues a sequence
of periodicity regions (yellow) sandwiched between regions of robust chaos (blue).
The nodes are parameter points at which \eqref{eq:f} has a {\em subsumed homoclinic connection} \cite{SiTu17}
where one branch of the unstable set of a saddle periodic solution
is completely contained within its stable set.
For the right-most node in Fig.~\ref{fig:bifSetLargeScale}, this connection is shown in Fig.~\ref{fig:subHCC}a,
where the saddle is the fixed point of the left piece of \eqref{eq:f}.
As another example, Fig.~\ref{fig:subHCC}b shows a subsumed homoclinic connection
for which the saddle is a period-three solution.
Such connections are infinitely degenerate for smooth maps,
but codimension-two for piecewise-linear maps.

%%%%%%%%%%%%%%%%%%%%%%%%%%%%%%%%%%%%%%%%%%%%%%%%%%%%%%%%%%%%%%%%%%%%%%%%%%%%%%%%%%%%%%%%%%%%%%%%%%%%
\begin{figure}[b!]
\begin{center}
\includegraphics[width=17cm]{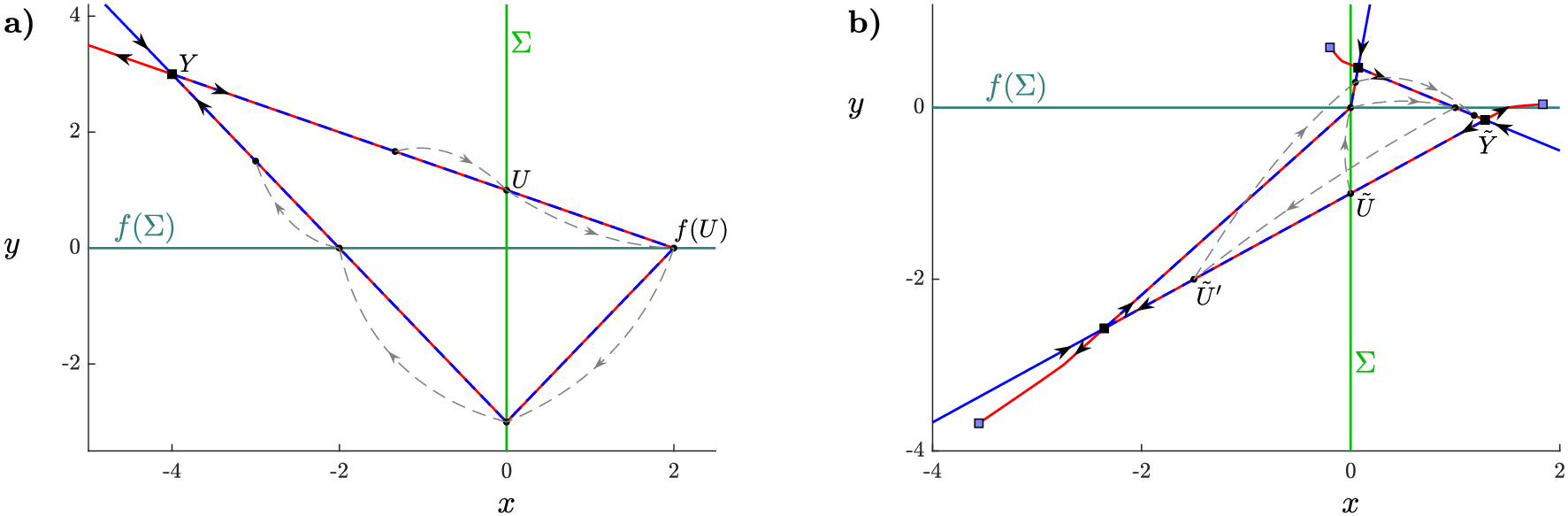}
\caption{
Subsumed homoclinic connections of the two-dimensional border-collision normal form \eqref{eq:f}.
Panel (a) uses $(\tau_L,\delta_L,\tau_R,\delta_R) = (2,0.75,-0.5,1.5)$ corresponding to
the right-most X in Fig.~\ref{fig:bifSetLargeScale}.
The black square $Y$ is a saddle fixed point and its stable and unstable sets are coloured blue and red respectively.
The vertical line is the switching line $\Sigma$;
the horizontal line is its image $f(\Sigma)$;
the dashed curves indicate the action of the map.
Panel (b) uses $\left( \tau_L, \delta_L, \tau_R, \delta_R \right)
= \left( -\frac{23}{33}, \frac{13}{66}, -\frac{5}{2}, 2 \right)$, taken from \cite{Si20}.
The three black squares form a saddle period-three solution ($RLR$-cycle)
and its stable and unstable sets are coloured blue and red respectively.
There is also a stable period-three solution ($RLL$-cycle) indicated with blue squares.
\label{fig:subHCC}
} 
\end{center}
\end{figure}
%%%%%%%%%%%%%%%%%%%%%%%%%%%%%%%%%%%%%%%%%%%%%%%%%%%%%%%%%%%%%%%%%%%%%%%%%%%%%%%%%%%%%%%%%%%%%%%%%%%%

Previous work \cite{Si20} explained the main sequence of roughly triangular regions $\cP_k'$
issuing from a subsumed homoclinic connection,
seen more clearly for the right-most node in the magnification Fig.~\ref{fig:bifSetBCNF}a.
In each $\cP_k'$ \eqref{eq:f} has an asymptotically stable period-$(k+m)$ solution
with one excursion far from the saddle, where $m \ge 1$ is a constant specific to the homoclinic connection
(Fig.~\ref{fig:bifSetBCNF}a uses $m = 2$).
These regions were explained by performing asymptotic calculations about an arbitrary subsumed homoclinic connection
to obtain formulas for the three bifurcation curves that bound each region.

%%%%%%%%%%%%%%%%%%%%%%%%%%%%%%%%%%%%%%%%%%%%%%%%%%%%%%%%%%%%%%%%%%%%%%%%%%%%%%%%%%%%%%%%%%%%%%%%%%%%
\begin{figure}[b!]
\begin{center}
\setlength{\unitlength}{1cm}
\begin{picture}(17,7)
\put(.7,0){\includegraphics[height=7cm]{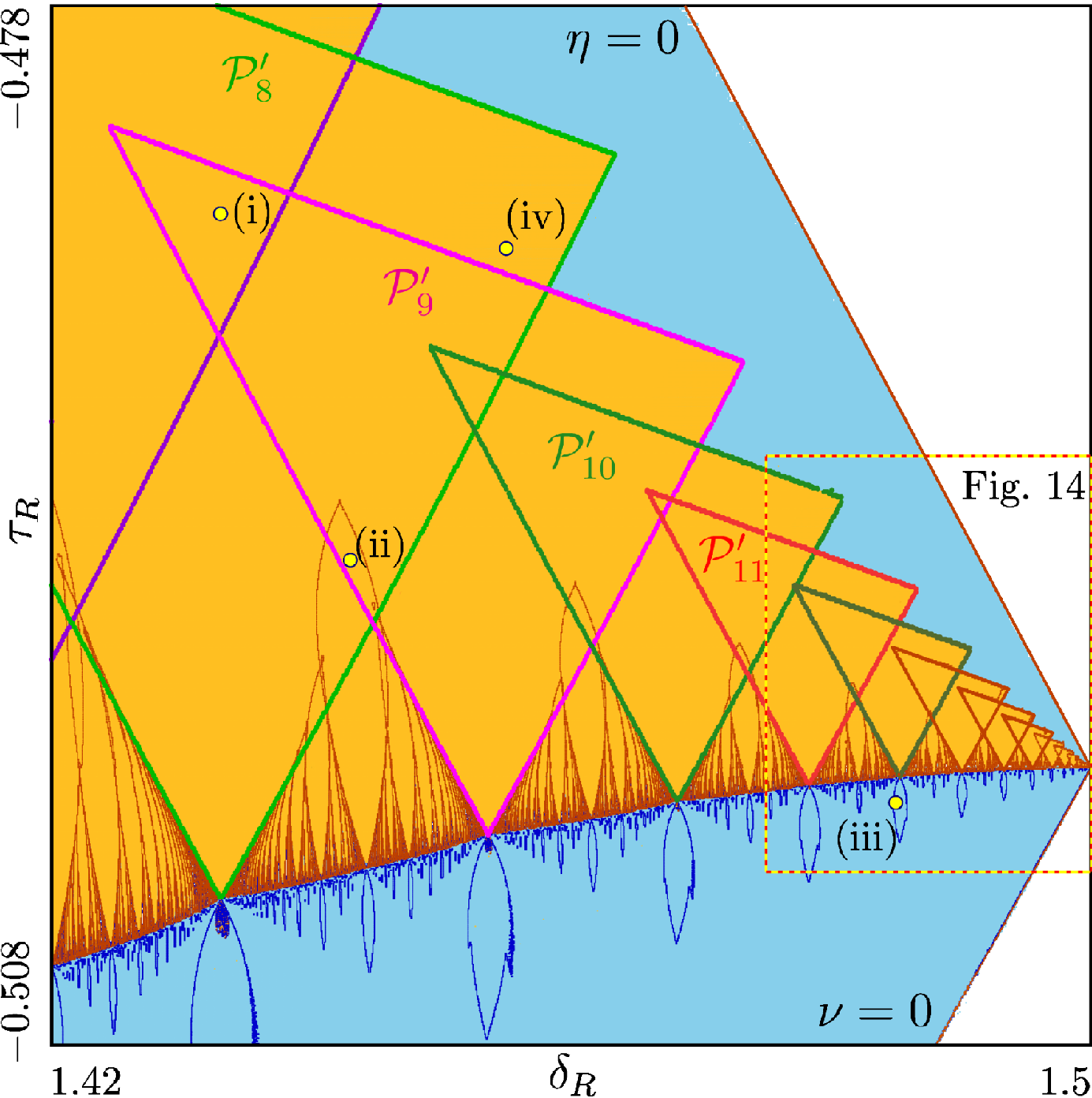}}
\put(10,0){\includegraphics[height=7cm]{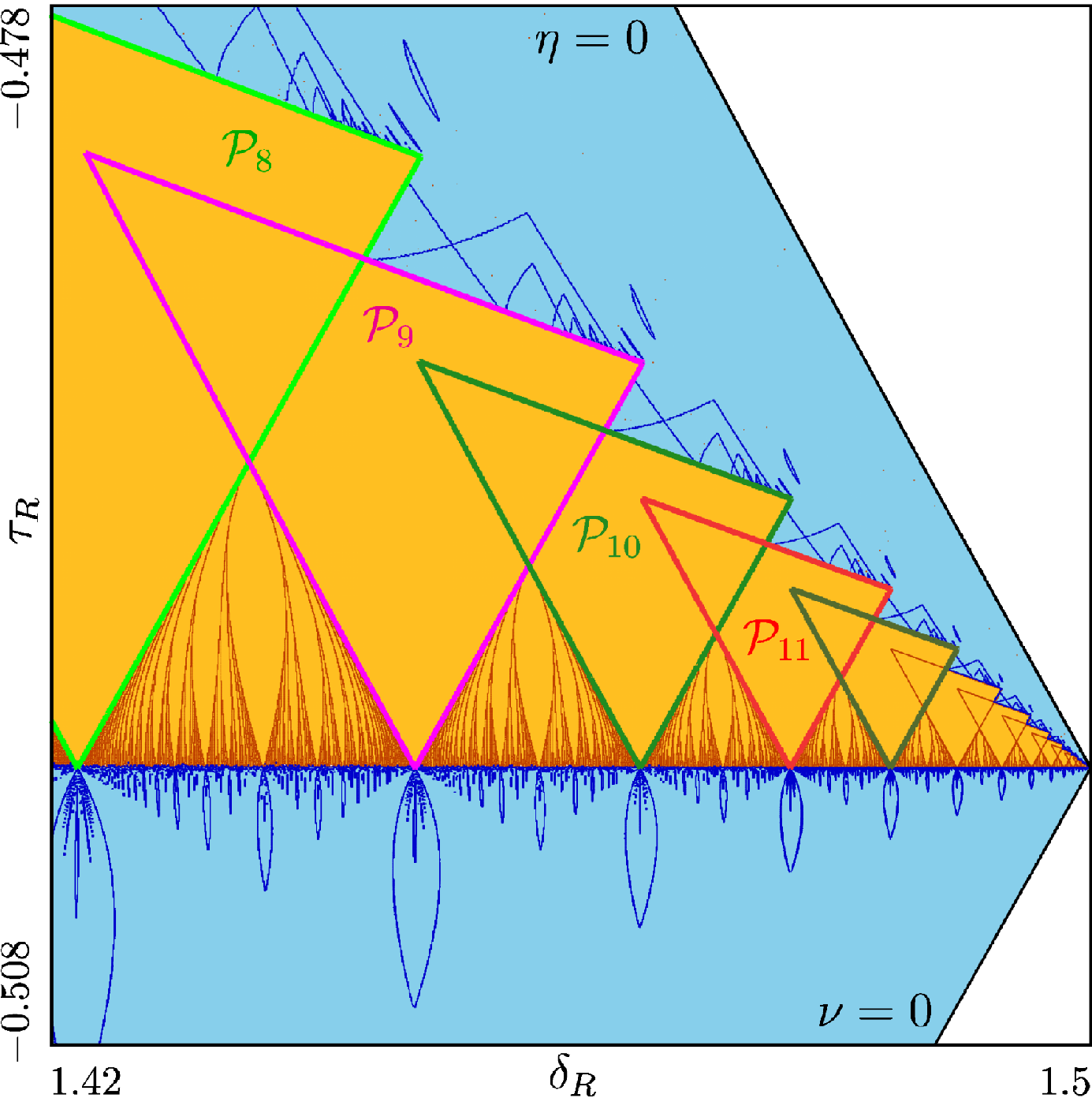}}
\put(0,6.6){{\bf a)}}
\put(9.3,6.6){{\bf b)}}
\end{picture}
\caption{
Panel (a) is a magnification of Fig.~\ref{fig:bifSetLargeScale};
panel (b) is a bifurcation set of the corresponding one-dimensional family $h$, given by \eqref{eq:h}.
Specifically, panel (b) uses $\sigma = 1.5$ and $\eta$ and $\nu$
given by \eqref{eq:eta2} and \eqref{eq:nu2}, with also $\lambda = 0.5$.
For panel (a) [panel (b)],
in yellow regions $f$ [$h$] has a periodic attractor,
in blue regions $f$ [$h$] has a chaotic attractor,
and in white regions $f$ [$h$] has no attractor.
The orange curves are boundaries of regions where $f$ [$h$] has a periodic attractor of fixed period;
the blue curves are boundaries of regions where $f$ [$h$] has a chaotic attractor with a fixed number of connected components.
In (a) the roughly triangular regions $\cP_k'$ are where $f$ has a stable periodic solution of period $p = k+2$;
in (b) the triangular regions $\cP_k$ are where $h$ has a stable fixed point $z_k^* \in I_k$.
Both panels show the homoclinic corner curves $\eta = 0$ and $\nu = 0$.
These curves intersect at the right-most X in Fig.~\ref{fig:bifSetLargeScale}.
In (a) the parameter points (i), (ii), (iii), and (iv)
are examined in Figs.~\ref{fig:basins}a, \ref{fig:basins}b, \ref{fig:bandexA}, and \ref{fig:bandexB} respectively.
\label{fig:bifSetBCNF}
} 
\end{center}
\end{figure}
%%%%%%%%%%%%%%%%%%%%%%%%%%%%%%%%%%%%%%%%%%%%%%%%%%%%%%%%%%%%%%%%%%%%%%%%%%%%%%%%%%%%%%%%%%%%%%%%%%%%

The aim of this paper is to explain the {\em entire} bifurcation structure
issuing from a subsumed homoclinic connection.
This is achieved by showing that the dynamics is well approximated by a family of one-dimensional maps $h(z;\eta,\nu,\sigma)$,
and performing a comprehensive analysis of this family.
The one-dimensional family has variable $z > 0$,
parameters $\eta, \nu \in \mathbb{R}$ and $\sigma > 1$, and is defined as follows.
For each $k \in \mathbb{Z}$, define the interval
\begin{equation}
I_k = \left[ \sigma^{-k}, \sigma^{-(k-1)} \right),
\label{eq:Ik}
\end{equation}
and the linear function
\begin{equation}
h_k(z;\eta,\nu,\sigma) = \frac{\eta - \nu}{\sigma - 1} \,\sigma^k z + \frac{-\eta + \sigma \nu}{\sigma - 1}.
\label{eq:hk}
\end{equation}
Then
\begin{equation}
h(z;\eta,\nu,\sigma) = h_k(z;\eta,\nu,\sigma), \quad \text{where $k \in \mathbb{Z}$ is such that $z \in I_k$},
\label{eq:h}
\end{equation}
see Fig.~\ref{fig:discMap}.

%%%%%%%%%%%%%%%%%%%%%%%%%%%%%%%%%%%%%%%%%%%%%%%%%%%%%%%%%%%%%%%%%%%%%%%%%%%%%%%%%%%%%%%%%%%%%%%%%%%%
\begin{figure}[t!]
\begin{center}
\includegraphics[width=15.5cm]{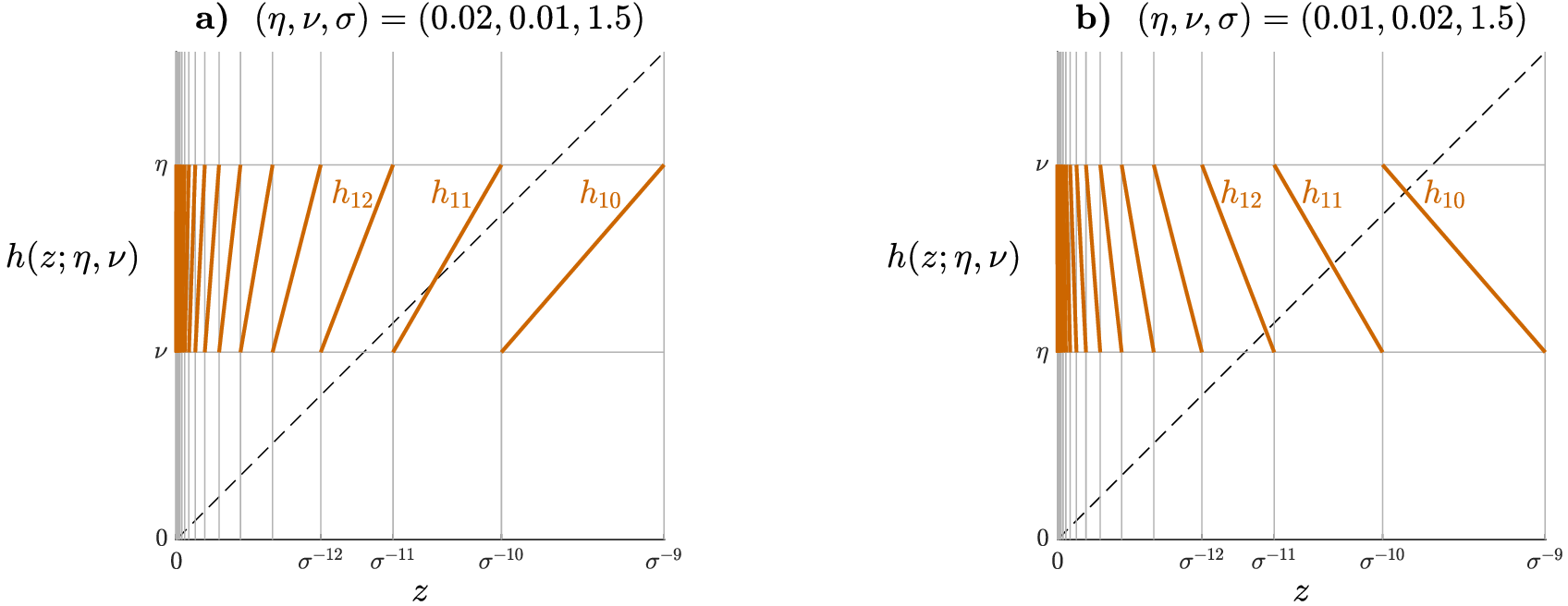}
\caption{
Two instances of the one-dimensional map $h$, given by \eqref{eq:h}.
If $\eta > \nu$, as in (a), then every branch of $h$ is increasing.
If $\eta < \nu$, as in (b), then every branch of $h$ is decreasing.
\label{fig:discMap}
} 
\end{center}
\end{figure}
%%%%%%%%%%%%%%%%%%%%%%%%%%%%%%%%%%%%%%%%%%%%%%%%%%%%%%%%%%%%%%%%%%%%%%%%%%%%%%%%%%%%%%%%%%%%%%%%%%%%

As an example, panel (b) of Fig.~\ref{fig:bifSetBCNF} shows a bifurcation set of \eqref{eq:h}
using values of $\eta$, $\nu$, and $\sigma$ that correspond to panel (a).
Remarkably, the full bifurcation structure of the two-dimensional family \eqref{eq:f}
is successfully reproduced by the one-dimensional family \eqref{eq:h}
with only minor differences that diminish as we approach the codimension-two point.

The remainder of the paper is organised as follows.
In \S\ref{sec:numericalMethods} we summarise the numerical methods used to generate the bifurcation sets.
Then in \S\ref{sec:derivation} we derive the one-dimensional family \eqref{eq:h}
as an approximation to the dynamics of \eqref{eq:f}.
For simplicity, here we only treat subsumed homoclinic connections
to saddle fixed points of the left piece of the map
(other saddles can be handled in a similar way, see \S\ref{sub:exD} and \cite{Si20}).
The approximation is obtained by considering the first return map $F$
for orbits of \eqref{eq:f} to the third quadrant of the plane.
Proximity to the homoclinic connection ensures that
each iteration of $F$ corresponds to many iterations of \eqref{eq:f} near the fixed point,
so over the course of these iterations the orbit comes close to the unstable subspace associated with the fixed point.
This subspace is one-dimensional, consequently $F$ is usefully approximated by a one-dimensional map.
This map has the form \eqref{eq:h}, where each $h_k$ corresponds to orbits of \eqref{eq:f}
that undergo $k$ iterations in the left half-plane and $m$ iterations in the right half-plane
before returning to the third quadrant.
The approximation is stated precisely as Theorem \ref{th:main},
which is then proved in \S\ref{sec:proof}.

Section \ref{sec:examples} provides three examples.
We first derive explicit expressions for the values of $\sigma$, $\eta$, and $\nu$
in terms of the parameters of \eqref{eq:f}
for the example with $m = 2$ shown in Fig.~\ref{fig:bifSetBCNF}.
We then derive analogous expressions when $m = 3$ and $\delta_L = 0$.
In this case the left piece of \eqref{eq:f} has degenerate range ---
this arises when \eqref{eq:f} models the oscillatory dynamics of a Filippov system
near a grazing-sliding bifurcation \cite{DiKo02,Si25f}.
Lastly we consider parameter values near those of Fig.~\ref{fig:subHCC}b for which the saddle
is a period-three solution, and
show how $\sigma$, $\eta$, and $\nu$ can be defined and evaluated in this situation.
For all three examples we compare a bifurcation set of \eqref{eq:f} to that of
the corresponding subfamily of \eqref{eq:h}.

In \S\ref{sec:1d} we study the basic properties of \eqref{eq:h}.
We first show \eqref{eq:h} is unchanged when $\eta$ and $\nu$ are scaled by $\sigma$,
except that the value of $k$ shifts by one.
We then identify an absorbing interval $J$ that contains all invariant sets of \eqref{eq:h}
and determine the number of pieces of \eqref{eq:h} over $J$.
Lastly we analyse fixed points of \eqref{eq:h}
and compute the triangles $\cP_k$ where $h_k$ has a stable fixed point in $J$.

In \S\ref{sec:fourStructures} we study the bifurcation structures of \eqref{eq:h}
and investigate the degree to which these are realised by \eqref{eq:f}.
The most significant difference is that above the triangles $\cP_k$ in Fig.~\ref{fig:bifSetBCNF}b
the attractor of \eqref{eq:h} is chaotic and can be the union of several disjoint intervals,
whereas above the corresponding regions $\cP_k'$ in Fig.~\ref{fig:bifSetBCNF}a
the attractor of \eqref{eq:f} is chaotic but has only one connected component.
We explain why the attractor of \eqref{eq:f} here has low density instead of gaps,
and argue that this discrepancy often does not occur below each $\cP_k'$.
The parameter space of \eqref{eq:h} can broadly be separated into four areas:
period-incrementing, period-adding, bandcount-adding, and bandcount-incrementing,
and we examine each of these areas in turn.
Where \eqref{eq:h} has at most two pieces over $J$,
its dynamics can be understood from the theory of two-piece, piecewise-linear maps \cite{AvGa19},
while elsewhere \eqref{eq:h} can have additional complexities.
Concluding comments are provided in \S\ref{sec:conc}.

%===============================================================================
\section{Numerical methods}
\label{sec:numericalMethods}

Here we summarise the methods used to generate Fig.~\ref{fig:bifSetBCNF}.
Other bifurcation sets were generated in the same way.
Other plots, such as one-parameter bifurcation diagrams, were produced via standard techniques.

To numerically compute a bifurcation set, the most basic approach is a Monte-Carlo simulation
whereby forward orbits of the map are computed over a grid of parameter points,
and each point coloured by the nature of the long-term behaviour of the orbit or orbits.
This approach is useful for a preliminary exploration,
but yields an unclear picture where the map has multiple attractors, even if multiple initial values are used.
In particular, the approach often fails to detect attractors with small basins of attraction,
and hence does not accurately resolve bifurcation boundaries where the size of a basin tends to zero.

To mitigate this we take advantage of the piecewise-linear nature of $f$.
For any finite symbolic itinerary $\cS$ comprised of $L$'s and $R$'s,
the periodic solution of $f$ that follows $\cS$ (termed an $\cS$-cycle)
can be calculated by solving the fixed point equation
for the composition of the pieces of $f$ in order specified by $\cS$ \cite{Si16}.
This equation is linear, so the periodic solution (and its stability multipliers)
can be computed directly and accurately without requiring an iterative root-finding algorithm.

So to produce Fig.~\ref{fig:bifSetBCNF}a we first performed a Monte-Carlo simulation over a coarse grid to
identify a set of candidate symbolic itineraries.
For each candidate $\cS$, we then used the corresponding fixed point equation
to identify over a fine ($1200 \times 1200$) grid the parameter region where $f$ has a stable $\cS$-cycle.
Points in this region were identified by growing outwards from potential points found during the coarse grid simulation,
and using the fixed point equation to verify existence and stability.
This allowed us to consider subsets of the full parameter region for each $\cS$, significantly reducing run time.
The orange curves in Fig.~\ref{fig:bifSetBCNF}a are the result of post-processing contour fits to the boundaries of these regions.
In this way the bifurcation boundaries of the periodic solutions are found accurately
and the extent of coexisting attractors is clear.

The blue curves in Fig.~\ref{fig:bifSetBCNF}a bound regions where a chaotic attractor
of $f$ has a fixed number of connected components.
These curves were computed by Eckstein's greatest common divisor algorithm \cite{AvEc07,Ec06}, which proceeds as follows.
For each parameter point we compute an orbit $(x_i,y_i)$ and select a collection of reference points
$\left( x_{i_1}, y_{i_1} \right), \ldots, \left( x_{i_N}, y_{i_N} \right)$ that are assumed to belong to the attractor.
Given $\ee > 0$, we then identify values $k$ for which
$\left\| (x_k,y_k) - \left( x_{i_j}, y_{i_j} \right) \right\| < \ee$ for some $j \in \{ 1,\ldots,N \}$ with $k > i_j$.
Our estimate for the number of connected components of the attractor is
the greatest common divisor of all differences $k - i_j$.
This algorithm relies on the continuity of $f$ and the assumed ergodicity of the attractor \cite{AvEc07,Ec06}.
For Fig.~\ref{fig:bifSetBCNF}a we used $\ee = 10^{-4}$ and $N = 2000$.
Again contour fits were used to produce the blue curves.
The white regions indicate the absence of an attractor
and are simply where the norm of a point in a sample forward orbit exceeded a threshold value.

Fig.~\ref{fig:bifSetBCNF}b was computed with the same techniques,
except, since $h$ is discontinuous, we resorted to a box-counting approach
to estimate the number of connected components.
To do this we partitioned the range of $h$ into $1000$ intervals (boxes) of equal size,
marked those visited by a long forward orbit in the attractor,
and counted the number of connected clusters of marked intervals.
This approach allows for discontinuities,
but is less efficient than the greatest common divisor algorithm.
Hence many iterates were required to obtain clear results (we used $10^6$ iterations),
particularly in areas of parameter space where the attractor has low density over some intervals.

%===============================================================================
\section{Construction of the one-dimensional approximation}
\label{sec:derivation}

In this section we study the two-dimensional border-collision normal form $f$, given by \eqref{eq:f},
for parameter values near a subsumed homoclinic connection of the fixed point of the left piece of $f$.
We consider the first return of iterates to the third quadrant,
and show how this can be approximated by the one-dimensional family $h$, given by \eqref{eq:h}.

Let $f_L$ and $f_R$ denote the left and right pieces of $f$.
We assume $\delta_L \ge 0$ and $\delta_R > 0$ so that some of the constructions can be simplified.
Note that $\delta_L$ and $\delta_R$ are the determinants of the Jacobian matrices of $f_L$ and $f_R$,
so these are both positive when $f$ is orientation-preserving,
as is usually the case when $f$ approximates the Poincar\'e map of an ODE system.
We also allow $\delta_L = 0$ to accommodate the scenario that $f$ approximates
a return map for a grazing-sliding bifurcation \cite{DiKo02,Si25f}.
For any point $P \in \mathbb{R}^2$ we write $P = (P_1,P_2)$.

%-------------------------------------------------------------------------------
\subsection{First return to the third quadrant}
\label{sub:inducedMap}

Let
\begin{align}
\Omega_L &= \left\{ P \in \mathbb{R}^2 \,\middle|\, P_1 < 0 \right\}, &
\Omega_R &= \left\{ P \in \mathbb{R}^2 \,\middle|\, P_1 > 0 \right\},
\nonumber
\end{align}
denote the open left and right half-planes of $\mathbb{R}^2$,
and let $\Sigma = \left\{ P \in \mathbb{R}^2 \,\middle|\, P_1 = 0 \right\}$ denote the switching line.
These three sets form a partition of the phase space of $f$.

With $\delta_R > 0$, if a forward orbit of $f$ exits $\Omega_R$, it arrives in the third quadrant
\begin{equation}
\cQ_3 = \left\{ P \in \mathbb{R}^2 \,\middle|\, P_1 \le 0,\, P_2 < 0 \right\}.
\nonumber
\end{equation}
This is because if $P \in \Omega_R$ and $f(P) \notin \Omega_R$,
then $P_1 > 0$, $f(P)_1 \le 0$, and $f(P)_2 = -\delta_R P_1 < 0$, so $f(P) \in \cQ_3$.
With also $\delta_L \ge 0$, the forward orbit of any point $P \in \cQ_3$
cannot return to $\cQ_3$ until first entering $\Omega_R$.
This is because if $P \notin \Omega_R$,
then $f(P)_2 = -\delta_L P_1 \ge 0$, so $f(P) \notin \cQ_3$.

The {\em first return map} (or {\em induced map}) $F : \cQ_3 \to \cQ_3$ is defined by
\begin{equation}
F(P) = f^n(P), \quad \text{for the smallest $n \ge 1$ for which $f^n(P) \in \cQ_3$},
\label{eq:F}
\end{equation}
where $P \in \cQ_3$ and $F(P)$ is undefined if no such $n$ exists.
In view of the above remarks,
if $n$ exists then $P$ undergoes some $\ell \ge 1$ iterations under $f_L$,
then $r = n - \ell$ iterations under $f_R$ upon reaching $F(P) \in \cQ_3$.
That is, there exist unique $\ell \ge 1$ and $r \ge 1$ (with $\ell + r = n$) such that
\begin{equation}
F(P) = f_R^r \left( f_L^\ell(P) \right),
\label{eq:F2}
\end{equation}
and for each $k \in \{ 1,2,\ldots,\ell+r-1 \}$,
$f^k(P) \in \Omega_L$ if and only if $k < \ell$.
This is illustrated in Fig.~\ref{fig:schem}
for a point $P$ with $\ell = 6$ and $r = 2$.

%%%%%%%%%%%%%%%%%%%%%%%%%%%%%%%%%%%%%%%%%%%%%%%%%%%%%%%%%%%%%%%%%%%%%%%%%%%%%%%%%%%%%%%%%%%%%%%%%%%%
\begin{figure}[b!]
\begin{center}
\includegraphics[width=8cm]{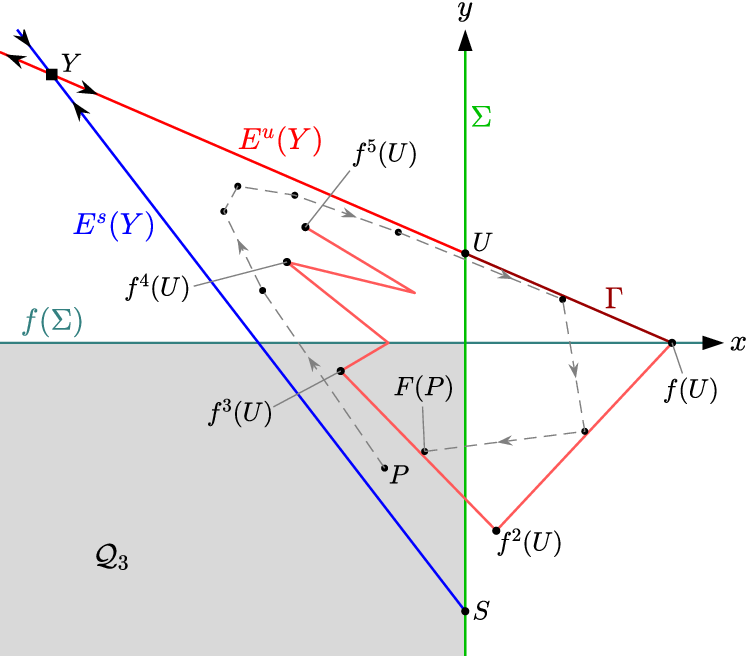}
\caption{
A sketch of the phase space of the two-dimensional border-collision normal form $f$
with $(\tau_L,\delta_L,\tau_R,\delta_R) = (2,0.75,-0.35,1.05)$.
The $x \le 0$ parts of the stable and unstable subspaces $E^s(Y)$ and $E^u(Y)$ are coloured blue and red respectively.
We also show $\Gamma$ (the line segment from $U$ to $f(U)$)
and some of its images under $f$ (these form part of the unstable set $W^u(Y)$).
The third quadrant $\cQ_3$ is shaded,
and we illustrate the forward orbit of a typical point $P \in \cQ_3$ located just above $E^s(Y)$.
\label{fig:schem}
} 
\end{center}
\end{figure}
%%%%%%%%%%%%%%%%%%%%%%%%%%%%%%%%%%%%%%%%%%%%%%%%%%%%%%%%%%%%%%%%%%%%%%%%%%%%%%%%%%%%%%%%%%%%%%%%%%%%

%-------------------------------------------------------------------------------
\subsection{A saddle fixed point}
\label{sub:saddle}

Now suppose $f_L$ has a saddle fixed point in $\Omega_L$.
With $\delta_L \ge 0$ this occurs if and only if $\tau_L > \delta_L + 1$,
and the fixed point is
\begin{equation}
Y = \left( \frac{-1}{\tau_L - \delta_L - 1}, \frac{\delta_L}{\tau_L - \delta_L - 1} \right).
\label{eq:Y}
\end{equation}
The stability multipliers of $Y$, $\lambda \in (0,1)$ and $\sigma > 1$, are the eigenvalues of
the Jacobian matrix $\rD f_L = \begin{bmatrix} \tau_L & 1 \\ -\delta_L & 0 \end{bmatrix}$
(notice $\tau_L = \lambda + \sigma$ and $\delta_L = \lambda \sigma$).
The stable and unstable subspaces for $Y$ are the lines through $Y$ with directions
matching the corresponding eigenvectors of $\rD f_L$:
\begin{align}
E^s(Y) &= \left\{ P \in \mathbb{R}^2 \,\middle|\, \sigma (P_1 - Y_1) + P_2 - Y_2 = 0 \right\}, \\
E^u(Y) &= \left\{ P \in \mathbb{R}^2 \,\middle|\, \lambda (P_1 - Y_1) + P_2 - Y_2 = 0 \right\}.
\end{align}
These lines intersect $\Sigma$ at
\begin{align}
S &= \left( 0, \frac{-\sigma}{\sigma - 1} \right), &
U &= \left( 0, \frac{\lambda}{1 - \lambda} \right),
\label{eq:SU}
\end{align}
see Fig.~\ref{fig:schem}.

Since the unstable stability multiplier $\sigma$ is positive,
the unstable set $W^u(Y)$ has two dynamically independent branches.
The line segment
\begin{equation}
\Gamma = \left\{ (1-\alpha) U + \alpha f(U) \,\middle|\, 0 \le \alpha < 1 \right\}
\nonumber
\end{equation}
is a {\em fundamental domain} for the right branch of $W^u(Y)$,
meaning that every orbit of $f$ in this branch contains exactly one point in $\Gamma$ \cite{KrOs05}.
Therefore this branch can be generated by iterating $\Gamma$ under $f$,
and this is illustrated in Fig.~\ref{fig:schem} which shows $f^k(\Gamma)$ for $k = 1,\ldots,4$.

%-------------------------------------------------------------------------------
\subsection{Coordinates relative to the eigendirections}
\label{sub:coords}

Any point $P \in \mathbb{R}^2$ can be expressed in coordinates $(a,b)$ relative to $E^s(Y)$ and $E^u(Y)$ by
\begin{equation}
P = Y + a(S-Y) + b(U-Y).
\label{eq:coordChange}
\end{equation}
By inverting \eqref{eq:coordChange}, and using the above formulas
for $S$, $U$, and $Y$, we obtain
\begin{align}
a(P) &= \frac{\sigma - 1}{\sigma - \lambda} \big( \lambda - (1-\lambda)(\lambda P_1 + P_2) \big), \label{eq:a} \\
b(P) &= \frac{1 - \lambda}{\sigma - \lambda} \big( \sigma + (\sigma-1)(\sigma P_1 + P_2) \big), \label{eq:b}
\end{align}
for any $P \in \mathbb{R}^2$.
In $(a,b)$-coordinates the switching line is $a + b = 1$.

Fig.~\ref{fig:chaoticAttr} provides an example to illustrate $(a,b)$-coordinates.
For the given parameter values,
$f$ appears to have a chaotic attractor (black dots).
Panel (a) uses $(x,y)$-coordinates,
while panel (b) uses $(a,b)$-coordinates over the orange parallelogram shown in panel (a).

%%%%%%%%%%%%%%%%%%%%%%%%%%%%%%%%%%%%%%%%%%%%%%%%%%%%%%%%%%%%%%%%%%%%%%%%%%%%%%%%%%%%%%%%%%%%%%%%%%%%
\begin{figure}[b!]
\begin{center}
\includegraphics[width=17cm]{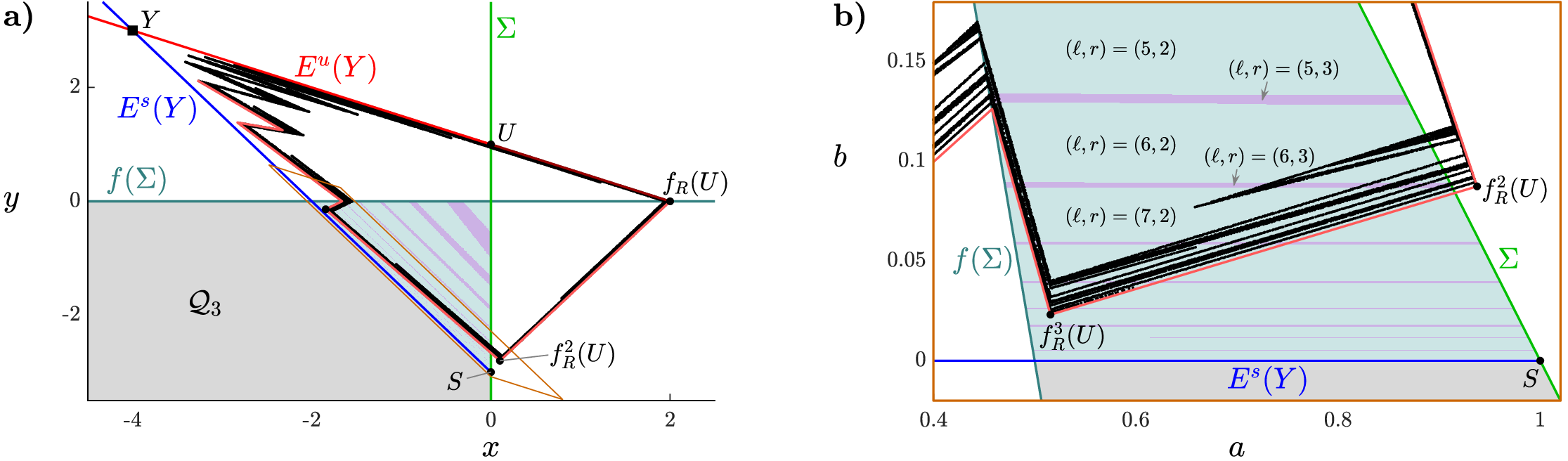}
\caption{
Panel (a) is a phase portrait of two-dimensional border-collision normal form $f$
with $(\tau_L,\delta_L,\tau_R,\delta_R) = (2,0.75,-0.45,1.4)$.
Panel (b) shows part of the same plot in $(a,b)$-coordinates \eqref{eq:coordChange}.
The black dots indicate the attractor of the map (they show several iterates of a typical forward orbit with transient dynamics removed).
The third quadrant $\cQ_3$ is shaded by the value of $r$ in \eqref{eq:F2}:
pale green: $r=2$;
lavender: $r=3$;
grey: $r$ is undefined.
\label{fig:chaoticAttr}
} 
\end{center}
\end{figure}
%%%%%%%%%%%%%%%%%%%%%%%%%%%%%%%%%%%%%%%%%%%%%%%%%%%%%%%%%%%%%%%%%%%%%%%%%%%%%%%%%%%%%%%%%%%%%%%%%%%%

The forward orbits of points in $\cQ_3$ that lie above $E^s(Y)$ return to $\cQ_3$.
This part of $\cQ_3$ is shaded pale green for points for which $r = 2$ in \eqref{eq:F2},
and lavender for points with $r = 3$.
The green and lavender strips correspond to successively larger values of $\ell$ as we approach $E^s(Y)$.

%-------------------------------------------------------------------------------
\subsection{Main result}
\label{sub:main}

In Fig.~\ref{fig:chaoticAttr} the point $f_R^2(U)$ lies relatively close to $S$.
Subsumed homoclinic connections occur when $f_R^m(U) = S$ for some $m \ge 2$,
assuming $f_R^k(U) \in \Omega_R$ for all $k = 1,2,\ldots,m-1$.
This is because, by an inductive argument, in this case each $f^k(\Gamma)$ for $k = 0,1,\ldots,m-1$ is a line segment
that does not cross the switching line, hence its image under $f$ is another line segment.
In particular, $f^m(\Gamma)$ is the line segment from $S$ to $f_R(S)$,
which is the part of the stable subspace $E^s(Y)$ that belongs to $\cQ_3$, and is a subset of the stable set $W^s(Y)$.
Since $\Gamma$ is a fundamental domain for the right branch of $W^u(Y)$,
we can conclude that this branch is contained within $W^s(Y)$.

Theorem \ref{th:main} given below contains the assumption that a subsumed homoclinic connection
occurs at a parameter point $\xi_0$ in the set
\begin{equation}
\Xi = \left\{ (\tau_L,\delta_L,\tau_R,\delta_R) \in \mathbb{R}^4
\,\middle|\, \tau_L > \delta_L + 1,\, 0 \le \delta_L < 1,\, \delta_R > 0 \right\}.
\label{eq:Xi}
\end{equation}
The constraint $\delta_L < 1$ ensures $\lambda \sigma < 1$ at $\xi_0$,
while the remaining constraints in \eqref{eq:Xi} have been mentioned above.
For any nearby parameter point $\xi$, let
\begin{align}
\eta &= b \left( f_R^{m+1}(U) \right), &
\nu &= b \left( f_R^m(U) \right),
\label{eq:nuetaDefn}
\end{align}
and 
\begin{equation}
\ee = {\rm max}[|\eta|,|\nu|],
\label{eq:ee}
\end{equation}
noticing that if $\xi = \xi_0$ then $\ee = 0$.
Also let
\begin{align}
\Psi &= \left\{ P \in \cQ_3 \,\middle|\, 0 < b(P) < 2 \ee \right\}, \label{eq:Psi} \\
\Psi_0 &= \left\{ P \in \Psi \,\middle|\, \sigma^{\ell(P)} b(P) \in [1,\sigma),\, r(P) = m \right\}, \label{eq:Psi0}
\end{align}
where we write $\ell(P)$ and $r(P)$ for the values in \eqref{eq:F2} of a point $P \in \cQ_3$.
Theorem \ref{th:main} shows that most points in $\Psi$ belong to $\Psi_0$,
and that if $P \in \Psi_0$ then $z' \approx h(z)$,
where $z = b(P)$ and $z' = b(F(P))$.
That is, $h$ approximates the $b$-dynamics of the first return map for most points $P$.
The parameter values of $h$ are given by \eqref{eq:nuetaDefn} and the unstable stability multiplier $\sigma$.

%...............................................................................
\begin{theorem}
Suppose for $\xi_0 \in \Xi$ there exists $m \ge 2$
such that $f_R^m(U) = S$ and $f_R^k(U) \in \Omega_R$ for all $k = 1,2,\ldots,m-1$.
Then there exists a neighbourhood $\cN \subset \mathbb{R}^4$ of $\xi_0$ and constants $C_1, C_2 > 0$
such that for all $\xi \in \cN \cap \Xi$ we have
\begin{align}
1 - \frac{{\rm area} \left( \Psi_0 \right)}{{\rm area}(\Psi)} &< C_1 \ee, \label{eq:fraction} \\
|z' - h(z)| &< C_2 \ee^c, \qquad \text{for all $P \in \Psi_0$}, \label{eq:mainApprox}
\end{align}
where $z = b(P)$, $z' = b(F(P))$, and $c \in \mathbb{R} \cup \{ \infty \}$ is such that $\lambda \sigma^c = 1$.
\label{th:main}
\end{theorem}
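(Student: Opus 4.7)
The plan is to work in the $(a, b)$-coordinates of \S\ref{sub:coords}, in which $f_L$ acts diagonally as $(a, b) \mapsto (\lambda a, \sigma b)$, the switching line $\Sigma$ becomes $a + b = 1$, and $\Psi$ is a strip of area $O(\ee)$ on which $|a(P)|$ is uniformly bounded. Since the coordinate change is affine, areas transform by a constant Jacobian, so all estimates can be carried out in these coordinates.

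For \eqref{eq:fraction}, I would bound separately the subsets of $\Psi$ on which each defining condition of $\Psi_0$ fails. When $b(P) \in I_k$, the iterate $f_L^j(P) = (\lambda^j a(P), \sigma^j b(P))$ lies in $\Omega_L$ iff $\lambda^j a(P) + \sigma^j b(P) < 1$, and since $\sigma^k b(P) \in [1, \sigma)$ the natural equality $\ell(P) = k$ holds except on a sub-strip adjacent to the upper endpoint $b = \sigma^{-(k-1)}$ of width at most $|a(P)| (\lambda/\sigma)^{k-1}$. Summing over $k \geq k_0 \sim \log_\sigma(1/\ee)$ gives a geometric series controlled by $(\lambda/\sigma)^{k_0}$, which equals $O(\ee^{c+1})$ because $\lambda^{k_0} = O(\ee^c)$ and $\sigma^{-k_0} = O(\ee)$; this yields fractional area $O(\ee^c)$. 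Next, for $P$ with $\ell(P) = k$, the point $Q = f_L^k(P)$ lies within $O(\ee^c)$ of the segment $\{ (0, t) : t \in [1, \sigma) \}$, which is the fundamental domain $\Gamma$ expressed in $(a,b)$-coordinates via $U = (0, 1)$ and $f_L(U) = (0, \sigma)$. The assumptions $f_R^j(U) \in \Omega_R$ for $1 \le j \le m-1$ and $f_R^m(U) = S$ at $\xi_0$, together with the continuity identity $f_L(U) = f_R(U)$, imply by affineness and continuity in $\xi$ that $r(P) = m$ except on a boundary region near $t = \sigma$ of area $O(\ee^2)$, i.e.\ fractional area $O(\ee)$.

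For \eqref{eq:mainApprox}, I would exploit the affineness of $f_R^m$. Writing $z = b(P)$ and $Q = f_L^k(P) = (\lambda^k a(P), \sigma^k z)$, decompose
\[
f_R^m(Q) = f_R^m\bigl( (0, \sigma^k z) \bigr) + L \bigl( \lambda^k a(P), 0 \bigr),
\]
where $L$ is the linear part of $f_R^m$ in $(a,b)$-coordinates. The $b$-coordinate of the first summand is an affine function of $t = \sigma^k z$; since $f_R^m((0, 1)) = f_R^m(U)$ has $b$-coordinate $\nu$ and $f_R^m((0, \sigma)) = f_R^m(f_L(U)) = f_R^{m+1}(U)$ has $b$-coordinate $\eta$, a direct calculation identifies it as $h_k(z)$. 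The $b$-coordinate of the second summand is $L_{21} \lambda^k a(P)$, uniformly bounded on a neighbourhood $\cN$ of $\xi_0$ by a constant times $\lambda^k = O(\ee^c)$, yielding \eqref{eq:mainApprox}.

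The main obstacle is ensuring all constants are uniform in $\xi \in \cN$: the matrix $L$, the coordinate change, and the margins by which $f_R^j(U) \in \Omega_R$ for $j < m$ all depend on $\xi$, so continuity and compactness arguments are required throughout the bookkeeping. A minor subtlety is the degenerate case $\lambda = 0$ (so $c = \infty$), where $f_L$ has rank one, the correction term $L(\lambda^k a(P), 0)$ vanishes for $k \geq 1$, and \eqref{eq:mainApprox} becomes an exact equality on $\Psi_0$.
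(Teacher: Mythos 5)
Your overall strategy is the same as the paper's: pass to the eigencoordinates $(a,b)$, show that the subset of $\Psi$ on which either $\ell(P)$ or $r(P)$ takes the ``wrong'' value has fractional area $O(\ee)$, and then use the affineness of $f_R^m$ and of $b$ to identify the image of the projection of $f_L^{\ell}(P)$ onto $E^u(Y)$ with $h_\ell(z)$ (interpolating between $\nu$ at $t=1$ and $\eta$ at $t=\sigma$), the discrepancy being controlled by $\lambda^{\ell}a(P)=O(\ee^c)$. Your treatment of \eqref{eq:mainApprox} is essentially the paper's Step~4, and your bound on the set where $\ell(P)\ne K(b(P),\sigma)$ is in fact sharper than the paper's ($O(\ee^c)$ versus $O(\ee)$), though you still need to rule out early escape at steps $j\le k-2$: when $\sigma<2$ and $\lambda>1-\tfrac{1}{\sigma}$ (which holds, e.g., for the paper's main example $\lambda=0.5$, $\sigma=1.5$) the inequality $\lambda^j a+\sigma^j b\ge 1$ with $b\in I_k$ and $j=k-2$ is only excluded once $k$ is large; this requires either the convexity of $j\mapsto\lambda^j a+\sigma^j b$ or the remark that $\lambda^{k-2}a>1-\tfrac{1}{\sigma}$ fails for $k$ large, neither of which you state.

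The genuine gap is the $r(P)=m$ step, which you dispatch with ``affineness and continuity.'' Two things are missing. First, the dangerous set consists of neighbourhoods of \emph{both} endpoints of $\Gamma$, not only $t=\sigma$: near $t=1$ (near $U$) one has $f_R^m(Z)\approx f_R^m(U)\approx S\in\Sigma$, so the orbit may fail to have left $\Omega_R$ after $m$ steps, while near $t=\sigma$ (near $f_R(U)$) one has $f_R^{m-1}(Z)\approx f_R^m(U)\approx S$, so the orbit may leave one step early. The $t=1$ end is the one tied directly to the defining degeneracy $f_R^m(U)=S$ and cannot be omitted. Second, what actually has to be proved is a statement about the preimage lines $f_R^{-j}(\Sigma)$: at $\xi_0$ they meet the closure of $\Gamma$ only at its endpoints (for $j=m$ at $U$ but not at $f_R(U)$, for $j=m-1$ at $f_R(U)$, and not at all for smaller $j$), they move by $O(\ee)$ under the parameter perturbation, and hence they miss the fundamental domain once its ends are trimmed by $t_1\ee$ for a suitably large constant $t_1$ \emph{and} it is fattened by $O(\ee^c)$ in the $a$-direction so as to contain $f_L^{\ell}(P)$ itself rather than only its projection onto $E^u(Y)$. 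This is precisely the role of the set $\Gamma_\ee$ in the paper's Step~1; without it, your claim that $r(P)=m$ fails only on a region of area $O(\ee^2)$ is an assertion rather than a proof. Once that is supplied, your bookkeeping for \eqref{eq:fraction} goes through and the constants are uniform on a compact neighbourhood of $\xi_0$ as you indicate.
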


%...............................................................................
\begin{remark}
If $\lambda = 0$ (equivalently $\delta_L = 0$) then $c = \infty$ and $\ee^c = 0$ in \eqref{eq:mainApprox}.
In this case, $z' = h(z)$, i.e.~there is no error in the approximation for points in $\Psi_0$.
This is because if $\lambda = 0$, then, before returning to $\cQ_3$,
the forward orbit any $P \in \cQ_3$ with $b(P) > 0$ lands on $\Gamma$,
from which the formula \eqref{eq:hk} for $h_k$ is constructed.

If instead $\lambda > 0$, then $c \in \mathbb{R}$ is such that $\lambda \sigma^c = 1$.
Since $\lambda \sigma = \delta_L < 1$ at $\xi_0$,
we can assume $\cN$ is chosen small enough that $c > 1$ throughout $\cN \cap \Xi$.
This ensures that the order of the absolute error $|z' - h(z)|$ is higher than the order of $h(z)$ (which is $\ee$).
\label{re:c}
\end{remark}

%...............................................................................
\begin{remark}
The condition on $\ell(P)$ in \eqref{eq:Psi0} is equivalent to $z \in I_{\ell(P)}$.
This ensures that for $P \in \Psi_0$ we have $h(z) = h_k(z)$, where $k = \ell(P)$.
Also $r(P) = m$ in \eqref{eq:Psi0},
hence each iteration of \eqref{eq:mainApprox}
corresponds to $\ell = k$ iterations of $f_L$ and $r = m$ iterations of $f_R$.
The value of $m$ is fixed, whereas $k$ is given in terms of $z$ by
\begin{equation}
k = K(z,\sigma) = \left\lceil -\frac{\ln(z)}{\ln(\sigma)} \right\rceil,
\label{eq:kFormula}
\end{equation}
obtained by rearranging \eqref{eq:Ik}.
\label{re:k}
\end{remark}

%...............................................................................
\begin{remark}
Fig.~\ref{fig:approx1d} illustrates the effectiveness of the approximation $z' \approx h(z)$ for the
example of Fig.~\ref{fig:chaoticAttr}.
Here $m = 2$ and $\eta = 0.023125$ and $\nu = 0.0875$.
The dots show pairs $(z,z')$ for points in the attractor, while the orange lines show the map $h$.
Each dot is turquoise if $P \in \Psi_0$, and purple otherwise.
As expected each turquoise dot is close to the corresponding branch $h_k$, that is $z' \approx h(z)$.
The approximation is better for larger values of $k$
because in this case the forward orbit of $P$ experiences more iterations under $f_L$ 
and hence gets closer the line segment $\Gamma$ from which the approximation is derived.
\label{re:example}
\end{remark}

%%%%%%%%%%%%%%%%%%%%%%%%%%%%%%%%%%%%%%%%%%%%%%%%%%%%%%%%%%%%%%%%%%%%%%%%%%%%%%%%%%%%%%%%%%%%%%%%%%%%
\begin{figure}[b!]
\begin{center}
\includegraphics[width=7.5cm]{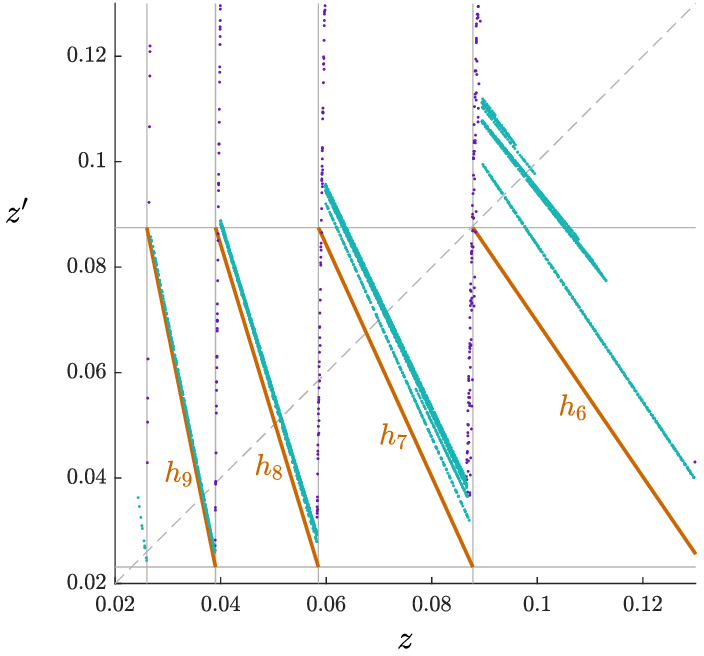}
\caption{
An illustration of the one-dimensional map $h$, given by \eqref{eq:h},
as an approximation to the dynamics of the two-dimensional border-collision normal form $f$, given by \eqref{eq:f},
with the parameter values of Fig.~\ref{fig:chaoticAttr}.
For each $P \in \cQ_3$ in the numerically computed attractor of $f$,
we plot $z' = b(F(P))$ against $z = b(P)$,
where $F(P)$ is the next point in the forward orbit of $P$ that belongs to $\cQ_3$.
The orange lines show the branches $h_k$
with $\sigma = 1.5$ and $\eta = 0.023125$ and $\nu = 0.0875$
given by the formulas \eqref{eq:eta2} and \eqref{eq:nu2}.
\label{fig:approx1d}
} 
\end{center}
\end{figure}
%%%%%%%%%%%%%%%%%%%%%%%%%%%%%%%%%%%%%%%%%%%%%%%%%%%%%%%%%%%%%%%%%%%%%%%%%%%%%%%%%%%%%%%%%%%%%%%%%%%%

%...............................................................................
\begin{remark}
If $\eta > 0$ and $\nu > 0$, then forward orbits of $f$ are trapped above $E^s(Y)$ and below $E^u(Y)$, so $f$ has an attractor.
The codimension-one scenarios $\eta = 0$ and $\nu = 0$ are {\em homoclinic corners} \cite{Si16b}
where the unstable set $W^u(Y)$ attains a non-trivial intersection with the stable set $W^s(Y)$.
In Fig.~\ref{fig:bifSetBCNF} these curves are where the attractor is destroyed
because the attractor approaches $U$ as $\eta \to 0$ or $\nu \to 0$,
and with $\eta < 0$ or $\nu < 0$ the forward orbits of points near $U$ enter $\Omega_L$
below $E^s(Y)$ and subsequently diverge.
\label{re:homoclinicCorners}
\end{remark}

%===============================================================================
\section{Verification of the one-dimensional approximation}
\label{sec:proof}

In this section we prove Theorem \ref{th:main}.
We first give a heuristic explanation, then provide a formal proof.

For any point $P \in \Psi$, the value $b(P)$ is small.
Thus the forward orbit of $P$ experiences many iterations of $f$ before escaping $\Omega_L$.
During this time the orbit approaches $E^u(Y)$,
thus its first point outside $\Omega_L$, namely $f_L^{\ell(P)}(P)$, lies close to the line segment $\Gamma$.
So 
\begin{equation}
f_L^{\ell(P)}(P) \approx (1-\alpha) U + \alpha f(U),
\label{eq:proofOutline10}
\end{equation}
for some $\alpha \in [0,1)$.
Also $b(U) = 1$ and $b(f(U)) = \sigma$,
thus we likely have $b \left( f_L^{\ell(P)}(P) \right) \in [1,\sigma)$,
which is equivalent to $\sigma^{\ell(P)} b(P) \in [1,\sigma)$.

Since $\eta$ and $\nu$ are small, the image of $\Gamma$ under $f_R^m$ lies close to $E^s(Y) \cap \cQ_3$.
Thus most points on and near $\Gamma$ take $m$ iterations to escape $\Omega_R$,
so we likely have $r(P) = m$.
Since $f_R$ is affine, \eqref{eq:proofOutline10} implies
\begin{equation}
f_R^m \left( f_L^{\ell(P)}(P) \right) \approx (1-\alpha) f_R^m(U) + \alpha f_R^{m+1}(U),
\nonumber
\end{equation}
and since $b$ is affine
\begin{equation}
z' \approx (1-\alpha) \nu + \alpha \eta,
\label{eq:proofOutline30}
\end{equation}
using the definition of $\eta$ and $\nu$ and assuming $P \in \Psi_0$.
Finally, from a formula for $\alpha$ in terms of $z = b(P)$,
we obtain $(1-\alpha) \nu + \alpha \eta = h(z)$, resulting in \eqref{eq:mainApprox}.
Most of the effort in the proof is in bounding the set $\Psi_0$
and showing that $C_1$ exists.

%...............................................................................
\begin{proof}[Proof of Theorem \ref{th:main}]
The proof is completed in four steps.
Given $t_1 > 0$ and $\ee > 0$ sufficiently small, the set
\begin{equation}
\Gamma_\ee = \left\{ Z \in \mathbb{R}^2 \,|\, 0 \le a(Z) \le (2 \ee)^c, 1 + t_1 \ee < b(Z) < \sigma - t_1 \ee \right\}
\label{eq:Gammaee}
\end{equation}
is a fattened version of the line segment $\Gamma$, but with its ends trimmed (except if $c = \infty$, then it is not fattened).
In Step 1 we show that $t_1 > 0$ and the neighbourhood $\cN$ of $\xi_0$ can be chosen so that
\begin{equation}
\text{$r(P) = m$ for all $P \in \cQ_3$ for which $f_L^{\ell(P)}(P) \in \Gamma_\ee$} \,,
\label{eq:GammaeeProperty}
\end{equation}
for all $\xi \in \cN \cap \Xi$.

Given $t_2 > 0$, $\ee > 0$, and $k \ge 0$, let
\begin{equation}
\Phi_k = \left\{ P \in \cQ_3 \,\middle|\, (1 + t_2 \ee) \sigma^{-k} < b(P) < (\sigma - t_2 \ee) \sigma^{-k} \right\}.
\label{eq:Phik}
\end{equation}
In Step 2 we show that $t_2 > 0$ can be chosen so that
\begin{equation}
\text{$\ell(P) = k$ and $f_L^{\ell(P)}(P) \in \Gamma_\ee$ for all $P \in \Psi \cap \Phi_k$,}
\label{eq:PhikProperty}
\end{equation}
for all $\xi \in \cN \cap \Xi$,
reducing the size of $\cN$ if necessary.
This implies $\Psi \cap \left( \bigcup_{k \ge 0} \Phi_k \right) \subset \Psi_0$
from which we obtain \eqref{eq:fraction} in Step 3.
Finally in Step 4 we verify \eqref{eq:mainApprox}.

\myStep{1}{Verify \eqref{eq:GammaeeProperty}.}
Consider the line $G = f_R^{-1} \left( E^s(Y) \right)$
and notice $S \in E^s(Y) \cap G$ (because $S, f_R(S) \in E^s(Y)$).
At $\xi_0$ the point $S$ is the only point of intersection of $E^s(Y)$ and $G$,
for otherwise $E^s(Y)$ and $G$ would coincide and be invariant under $f_R$, and we could not have $S = f_R^m(U)$.
Thus there exists a neighbourhood $\cN \subset \mathbb{R}^4$ of $\xi_0$
such that $E^s(Y) \cap G = \{ S \}$ for all $\xi \in \cN \cap \Xi$.

Throughout this neighbourhood
$f_R^m(U)$ is an order-$\eta$ distance from $G$
and an order-$\nu$ distance from $E^s(Y)$ (by the definition \eqref{eq:nuetaDefn} of $\eta$ and $\nu$).
Thus $f_R^m(U)$ is an order-$\ee$ distance from $S$,
so since $S \in \Sigma$ the line $f_R^{-m}(\Sigma)$ comes within an order-$\ee$ distance of $U$.
At $\xi_0$ this line passes through $U$ but not $f_R(U)$,
thus for all $\xi \in \cN \cap \Xi$ this line does not intersect $\Gamma_\ee$,
assuming $t_1 > 0$ is chosen sufficiently large and reducing the size of $\cN$ if necessary.
Thus for all $\xi \in \cN \cap \Xi$ and $Z \in \Gamma_\ee$,
we have $f_R^m(Z) \in \Omega_L$.
By a similar argument $f_R^k(Z) \in \Omega_R$
for all $k = 0,1,\ldots,m-1$
(increasing $t_1$ and shrinking $\cN$ if necessary),
hence \eqref{eq:GammaeeProperty} holds.

\myStep{2}{Verify \eqref{eq:PhikProperty}.}
Since the coordinates $a$ and $b$ are aligned with the stable and unstable directions of $f_L$,
iterating $f_L$ corresponds to scaling the values of $a$ and $b$ by the eigenvalues $\lambda$ and $\sigma$.
That is,
\begin{align}
a \left( f_L^k(P) \right) &= \lambda^k a(P), \label{eq:aScaling} \\
b \left( f_L^k(P) \right) &= \sigma^k b(P), \label{eq:bScaling}
\end{align}
for all $P \in \mathbb{R}^2$ and $k \ge 0$.
Since $\Omega_L$ consists of all points with $a + b < 1$,
for any $P \in \cQ_3$ the value $\ell = \ell(P)$ is the smallest positive integer for which
$a \left( f_L^\ell(P) \right) + b \left( f_L^\ell(P) \right) \ge 1$.

Set $t_2 = {\rm max} \left[ 2 \sigma^2, t_1 \right]$.
Let $k \ge 0$ and choose any $P \in \Psi \cap \Phi_k$.
Then $b(P) < 2 \ee$ and $\sigma^{-k} < b(P)$, thus $\sigma^{-k} < 2 \ee$.
Hence
\begin{equation}
a \left( f_L^k(P) \right)
= \lambda^k a(P)
\le \sigma^{-k c}
\le (2 \ee)^c,
\label{eq:boundA}
\end{equation}
using also $a(P) < 1$ and $c > 1$ (true if $\cN$ is sufficiently small).
Also, by \eqref{eq:Phik} and \eqref{eq:bScaling},
\begin{equation}
1 + t_2 \ee < b \left( f_L^k(P) \right) < \sigma - t_2 \ee.
\label{eq:boundB}
\end{equation}
From \eqref{eq:boundA} and \eqref{eq:boundB},
\begin{align}
a \left( f_L^{k-1}(P) \right) + b \left( f_L^{k-1}(P) \right)
< (2 \ee \sigma)^c + \frac{\sigma - t_2 \ee}{\sigma}
< 1
\label{eq:boundC}
\end{align}
using $t_2 \ge 2 \sigma^2$ to obtain the last inequality.
It follows that $a \left( f_L^j(P) \right) + b \left( f_L^j(P) \right) < 1$
for all $j = 0,1,\ldots,k-1$.
Also $a \left( f_L^k \right) + b \left( f_L^k \right) > 1 + t_2 \ee > 1$,
therefore $\ell(P) = k$.
Finally, \eqref{eq:boundA} and \eqref{eq:boundB} imply $f_L^{\ell(P)}(P) \in \Gamma_\ee$ (because $t_2 \ge t_1$),
hence \eqref{eq:PhikProperty} holds.

\myStep{3}{Verify \eqref{eq:fraction}.}
By \eqref{eq:GammaeeProperty} and \eqref{eq:PhikProperty},
any $P \in \Psi \cap \Phi_k$ has $\ell(P) = k$ and $r(P) = m$.
Also $\sigma^k b(P) \in [1,\sigma)$ by \eqref{eq:boundB},
so any $P \in \Psi \cap \Phi_k$ belongs to $\Psi_0$.

For all $k \ge 0$, let $\Phi_k' = \left\{ P \in \cQ_3 \,\middle|\, \sigma^{-k} \le b(P) < \sigma^{-(k-1)} \right\}$.
For any $\xi \in \cN \cap \Xi$
there exists $k^*$ such that $\Psi$ is the union of all $\Phi_k'$ with $k > k^*$,
and part of $\Phi_{k^*}'$ (cut off at $b = 2 \ee$).
Moreover, $\Psi_0$ contains all $\Phi_k$ with $k > k^*$.
So since
\begin{equation}
1 - \frac{{\rm area}(\Phi_k)}{{\rm area}(\Phi_k')} \to \frac{2 t_2 \ee}{\sigma - 1} \qquad \text{as $k \to \infty$,}
\nonumber
\end{equation}
we have \eqref{eq:fraction}, say with $C_1 = \frac{4 t_2}{\sigma - 1}$.

\myStep{4}{Verify \eqref{eq:mainApprox}.}
Given $P \in \Psi_0$, let $Q \in E^u(Y)$ be such that
\begin{equation}
b \left( f_L^\ell(P) \right) = b(Q),
\label{eq:proof7}
\end{equation}
and write
\begin{equation}
Q = (1-\alpha) U + \alpha f(U).
\label{eq:proof8}
\end{equation}
Since $b$ is affine with $b(U) = 1$ and $b(f(U)) = \sigma$,
we have $b(Q) = 1 - \alpha + \alpha \sigma$.
So from \eqref{eq:bScaling} and \eqref{eq:proof7},
\begin{equation}
\sigma^\ell b(P) = 1 - \alpha + \alpha \sigma,
\nonumber
\end{equation}
and by solving for $\alpha$ we obtain
\begin{equation}
\alpha = \frac{\sigma^\ell b(P) - 1}{\sigma - 1}.
\label{eq:proof12}
\end{equation}
By \eqref{eq:proof8},
\begin{equation}
f_R^m(Q) = (1-\alpha) f_R^m(U) + \alpha f_R^{m+1}(U),
\nonumber
\end{equation}
and by \eqref{eq:nuetaDefn},
\begin{equation}
b \left( f_R^m(Q) \right) = (1-\alpha) \nu + \alpha \eta.
\nonumber
\end{equation}
By then substituting \eqref{eq:proof12} and $z = b(P)$, we obtain
\begin{equation}
b \left( f_R^m(Q) \right) = \frac{\sigma - \sigma^\ell z}{\sigma - 1} \,\nu
+ \frac{\sigma^\ell z - 1}{\sigma - 1} \,\eta
= h_\ell(z;\eta,\nu,\sigma),
\label{eq:proof30}
\end{equation}
and notice $h_\ell = h$ because $\sigma^\ell z \in [1,\sigma)$.

Finally, by \eqref{eq:boundA} and \eqref{eq:proof7} we have
\begin{equation}
\left\| f_L^\ell(P) - Q \right\| \le (2 \ee)^c \| S - Y \|,
\nonumber
\end{equation}
using also the definition \eqref{eq:coordChange} of $a$ and $b$.
Thus
\begin{equation}
\left\| f_R^m \left( f_L^\ell(P) \right) - f_R^m(Q) \right\| \le (2 \ee)^c \| S - Y \| \| \rD f_R \|^m,
\nonumber
\end{equation}
and
\begin{equation}
\left| z' - b \left( f_R^m(Q) \right) \right| \le C_2 \ee^c,
\nonumber
\end{equation}
where $C_2 = \frac{2^c \| S - Y \| \| \rD f_R \|^m}{\| U - Y \|}$, using again \eqref{eq:coordChange}.
In view of \eqref{eq:proof30}, this verifies \eqref{eq:mainApprox}.
\end{proof}

%===============================================================================
\section{Examples}
\label{sec:examples}

In this section we first derive formulas for $\eta$ and $\nu$
for the example shown in Fig.~\ref{fig:bifSetBCNF}.
We then consider $\delta_L = 0$, which arises when $f$ captures the dynamics near grazing-sliding bifurcations \cite{DiKo02,Si25f},
and in this case $\lambda = 0$ so there is no error in the approximation \eqref{eq:mainApprox}.
Finally we derive an analogous one-dimensional approximation to the situation in Fig.~\ref{fig:subHCC}b
for which the homoclinic connection is associated with a period-three solution
instead of the fixed point $Y$.

For each example we show a bifurcation set of $f$
and a bifurcation set of the corresponding subfamily of $h$
(done already in Fig.~\ref{fig:bifSetBCNF} for the first example).
For the first example a more detailed examination of the bifurcation structure is provided in \S\ref{sec:fourStructures}.

%-------------------------------------------------------------------------------
\subsection{A minimal example}
\label{sub:exA}

Recall from \S\ref{sec:derivation} that when the homoclinic connection is associated with the fixed point $Y$,
the parameters $\eta$ and $\nu$ are given in terms of the forward orbit of $U$ by \eqref{eq:nuetaDefn}.
By iterating $U$, given by \eqref{eq:SU}, under the right piece of $f$, we obtain
\begin{align}
f_R(U) &= \left( \frac{1}{1-\lambda}, 0 \right), \label{eq:fRU} \\
f_R^2(U) &= \left( 1 + \frac{\tau_R}{1 - \lambda}, \frac{-\delta_R}{1 - \lambda} \right), \label{eq:fR2U} \\
f_R^3(U) &= \left( \tau_R + 1 + \frac{\tau_R^2 - \delta_R}{1 - \lambda},
-\delta_R - \frac{\tau_R \delta_R}{1 - \lambda} \right), \label{eq:fR3U} \\
f_R^4(U) &= \left( \tau_R^2 + \tau_R - \delta_R + 1 + \frac{\tau_R^3 - 2 \tau_R \delta_R}{1 - \lambda},
-\delta_R (\tau_R + 1) - \frac{\delta_R (\tau_R^2 - \delta_R)}{1 - \lambda} \right). \label{eq:fR4U}
\end{align}
With $m = 2$, we have $\eta = b \left( f_R^3(U) \right)$ and $\nu = b \left( f_R^2(U) \right)$.
So by substituting \eqref{eq:fR2U} and \eqref{eq:fR3U} into the formula \eqref{eq:b} for $b$, we obtain
\begin{align}
\eta &= \frac{1}{\sigma - \lambda} \Big( \delta_R (\tau_R - \lambda + 1)
+ \big( (\tau_R + \delta_R) \lambda - \tau_R (\tau_R + \delta_R + 1) \big) \sigma \nonumber \\
&\quad+ \big( \tau_R^2 + \tau_R - \delta_R + 1 - (1 + \tau_R) \lambda \big) \sigma^2 \Big), \label{eq:eta2} \\
\nu &= \frac{\delta_R - (\delta_R + \tau_R) \sigma + (\tau_R - \lambda + 1) \sigma^2}{\sigma - \lambda}. \label{eq:nu2}
\end{align}
Fig.~\ref{fig:bifSetBCNF}a shows a bifurcation set of $f$ with $\tau_L = 2$ and $\delta_L = 0.75$.
In this case the stability multipliers of $Y$ are $\lambda = 0.5$ and $\sigma = 1.5$,
so $\eta$ and $\nu$ are given by \eqref{eq:eta2} and \eqref{eq:nu2} with these values of $\lambda$ and $\sigma$.
The curves $\eta = 0$ and $\nu = 0$ intersect at $(\delta_R,\tau_R) = (1.5,-0.5)$
where $f$ has a subsumed homoclinic connection, Fig.~\ref{fig:subHCC}a.
In the region where $\eta > 0$ and $\nu > 0$, the map has an attractor, see Remark \ref{re:homoclinicCorners}.
This attractor is destroyed along $\eta = 0$ and $\nu = 0$
where the stable and unstable sets of $Y$ develop a non-trivial intersection.

Fig.~\ref{fig:bifSetBCNF}b shows that with
\eqref{eq:eta2}, \eqref{eq:nu2}, $\lambda = 0.5$, and $\sigma = 1.5$,
the bifurcation set of $h$ is broadly the same as that of $f$, and matching better nearer the codimension-two point.
Thus the one-dimensional family $h$ is able reproduce the general behaviour
of the two-dimensional normal form $f$ near the codimension-two subsumed homoclinic connection.
A more detailed comparison is provided in \S\ref{sec:fourStructures}.

%-------------------------------------------------------------------------------
\subsection{An example with $\delta_L = 0$}
\label{sub:exB}

Now consider $f$ with $\delta_L = 0$, in which case $\lambda = 0$ and $\sigma = \tau_L$.
Fig.~\ref{fig:schemexB} shows a typical phase portrait.
Here we illustrate the one-dimensional approximation with $m = 3$,
so $\eta = b \left( f_R^4(U) \right)$ and $\nu = b \left( f_R^3(U) \right)$.
By substituting \eqref{eq:fR3U} and \eqref{eq:fR4U} into \eqref{eq:b}, with also $\lambda = 0$, we obtain
\begin{align}
\eta &= \frac{\delta_R \left( \tau_R^2 + \tau_R - \delta_R + 1 \right)}{\sigma}
-\tau_R^3 - \tau_R^2 (\delta_R + 1) + \tau_R (\delta_R - 1) + \delta_R^2 \nonumber \\
&\quad+ \left( \tau_R^3 + \tau_R^2 + \tau_R - 2 \delta_R \tau_R - \delta_R + 1 \right) \sigma, \label{eq:eta3zero} \\
\nu &= \frac{\delta_R (\tau_R + 1)}{\sigma}
- \tau_R (\tau_R + \delta_R + 1)
+ \left( \tau_R^2 + \tau_R - \delta_R + 1 \right) \sigma. \label{eq:nu3zero}
\end{align}

Fig.~\ref{fig:bifSetBCNFexB}a shows a bifurcation set of $f$ with $\delta_L = 0$ and $\delta_R = 2$.
Again $f$ has an attractor where $\eta > 0$ and $\nu > 0$,
and again this attractor is destroyed along the curves $\eta = 0$ and $\nu = 0$.
These curves intersect at
$(\tau_L,\tau_R) = \left( 1 + \frac{1}{\sqrt{5}}, \frac{\sqrt{5} - 1}{2} \right) \approx (1.4472,0.6180)$,
where $f$ has a subsumed homoclinic connection with $f^3(U) = S$.

%%%%%%%%%%%%%%%%%%%%%%%%%%%%%%%%%%%%%%%%%%%%%%%%%%%%%%%%%%%%%%%%%%%%%%%%%%%%%%%%%%%%%%%%%%%%%%%%%%%%
\begin{figure}[b!]
\begin{center}
\includegraphics[width=8cm]{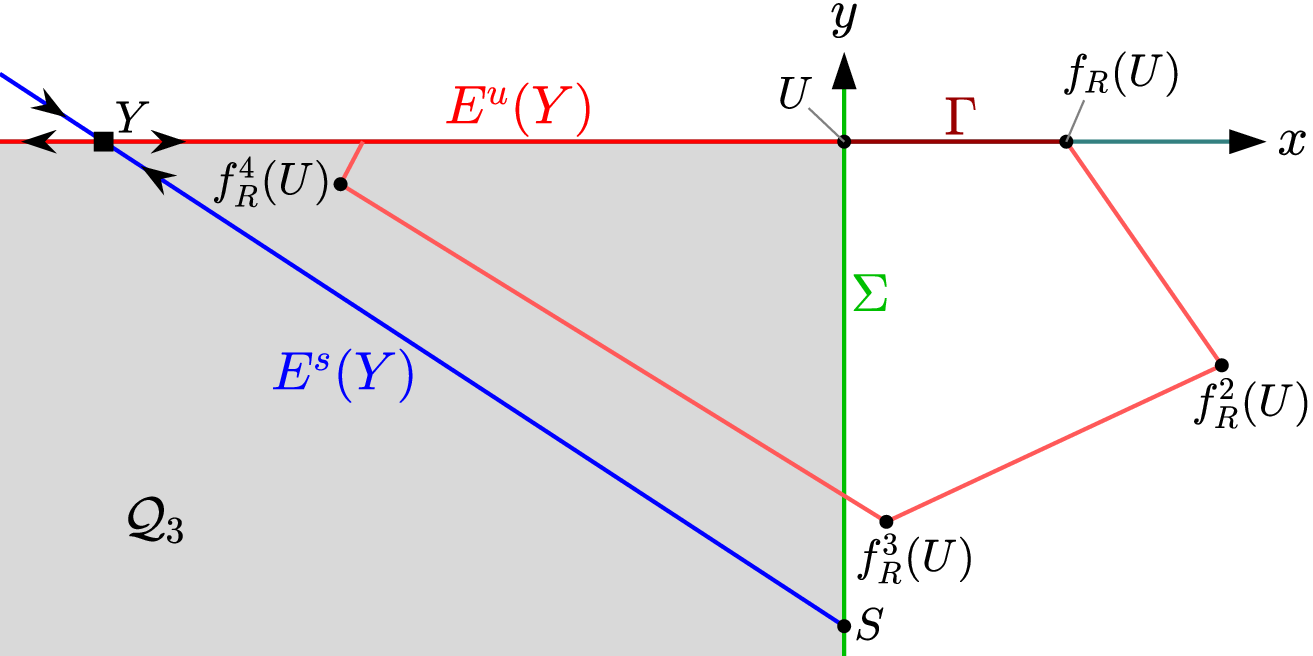}
\caption{
A sketch of the phase space of $f$ 
with $(\tau_L,\delta_L,\tau_R,\delta_R) = (1.3,0,0.7,2)$.
\label{fig:schemexB}
} 
\end{center}
\end{figure}
%%%%%%%%%%%%%%%%%%%%%%%%%%%%%%%%%%%%%%%%%%%%%%%%%%%%%%%%%%%%%%%%%%%%%%%%%%%%%%%%%%%%%%%%%%%%%%%%%%%%

Fig.~\ref{fig:bifSetBCNFexB}b shows the corresponding bifurcation set of $h$.
As with the previous example, the bifurcation sets of $f$ and $h$ are similar with
differences increasing with the distance from the codimension-two point.
However $\lambda = 0$, so each branch of $h$ captures the first return map $F$ with no error.
Thus bifurcation curves where periodic solutions lose stability
(e.g.~the upper boundaries of $\cP_k'$ and $\cP_k$) follow the same paths in the two bifurcation sets.
This is because the non-zero stability multiplier of a periodic solution of $f$
is the slope of the corresponding branch, or composition of branches, of $h$.
However, border-collision bifurcation curves (e.g.,~the left and right boundaries of $\cP_k'$ and $\cP_k$)
where a periodic solution is destroyed by colliding
with the switching line are different in the two bifurcation sets.
This is because the values of $\ell(P)$ and $r(P)$ for points $P \in \cQ_3$
have a dependency on the value of $a(P)$
that is not incorporated into the one-dimensional approximation.

%%%%%%%%%%%%%%%%%%%%%%%%%%%%%%%%%%%%%%%%%%%%%%%%%%%%%%%%%%%%%%%%%%%%%%%%%%%%%%%%%%%%%%%%%%%%%%%%%%%%
\begin{figure}[b!]
\begin{center}
\setlength{\unitlength}{1cm}
\begin{picture}(17,7)
\put(.7,0){\includegraphics[height=7cm]{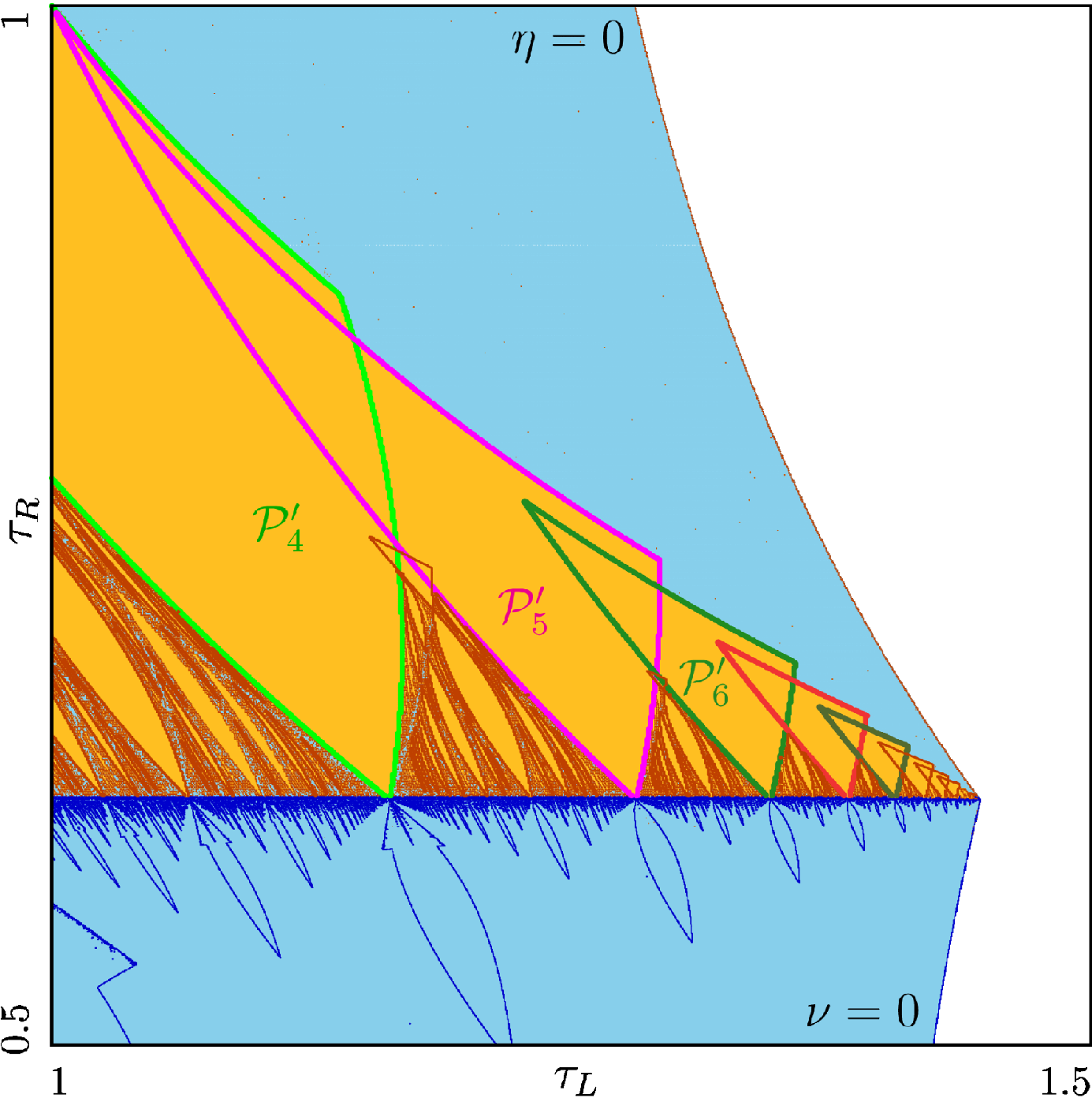}}
\put(10,0){\includegraphics[height=7cm]{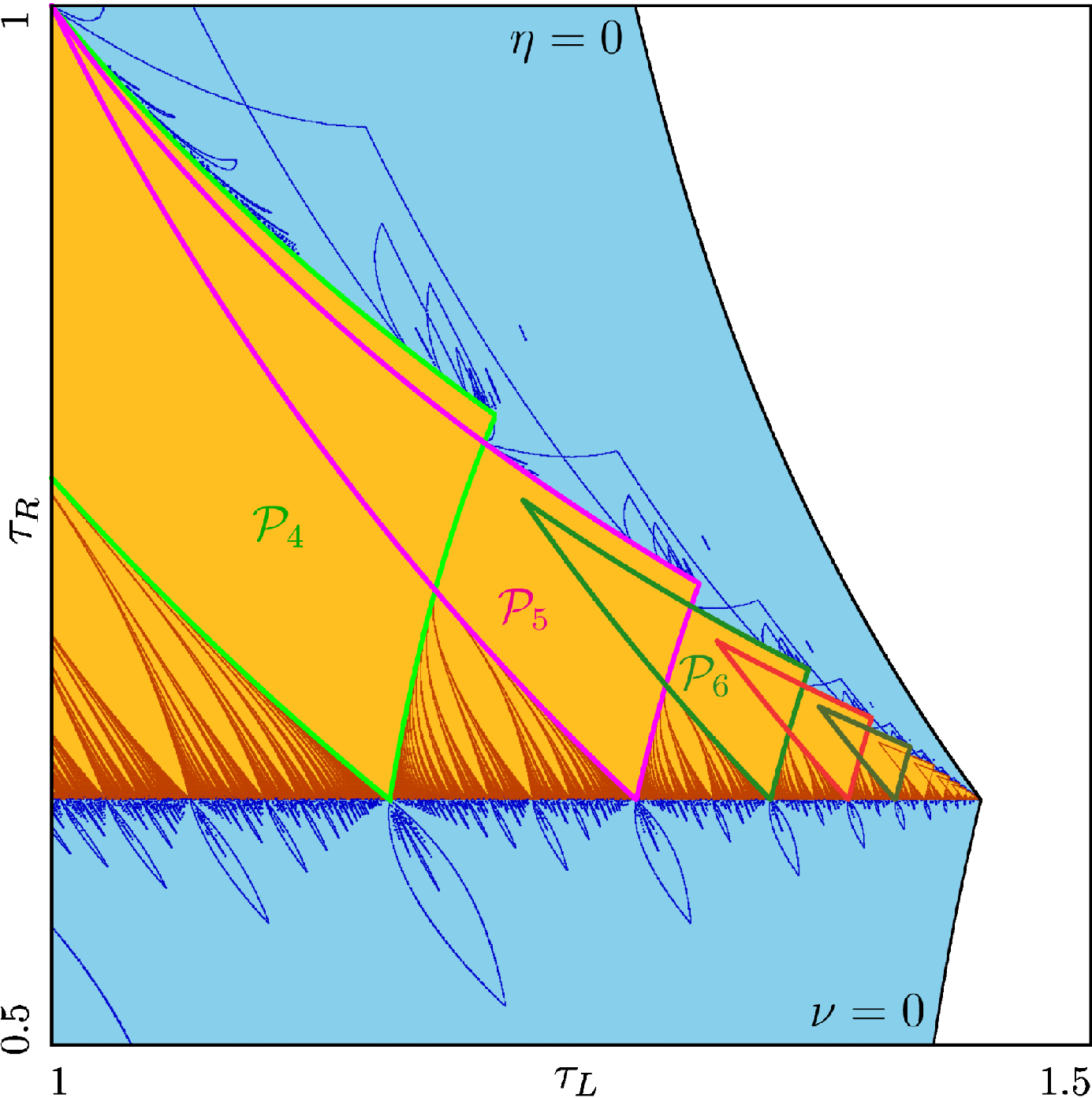}}
\put(0,6.6){{\bf a)}}
\put(9.3,6.6){{\bf b)}}
\end{picture}
\caption{
Panel (a) is a bifurcation set of $f$ with $\delta_L = 0$ and $\delta_R = 2$;
panel (b) is a bifurcation set of the corresponding one-dimensional family $h$
for which $\sigma = \tau_L$ and $\eta$ and $\nu$ are given by \eqref{eq:eta3zero} and \eqref{eq:nu3zero}.
The colour conventions are the same as in Fig.~\ref{fig:bifSetBCNF}.
In each $\cP_k'$ of panel (a), $f$ has a stable $L^k R^3$-cycle.
\label{fig:bifSetBCNFexB}
} 
\end{center}
\end{figure}
%%%%%%%%%%%%%%%%%%%%%%%%%%%%%%%%%%%%%%%%%%%%%%%%%%%%%%%%%%%%%%%%%%%%%%%%%%%%%%%%%%%%%%%%%%%%%%%%%%%%

%-------------------------------------------------------------------------------
\subsection{An example for a saddle periodic solution}
\label{sub:exD}

Here we fix $\tau_R = -2.5$ and $\delta_R = 2$, and vary $\tau_L$ and $\delta_L$.
In this case $f$ has a subsumed homoclinic connection to a period-three solution ($RLR$-cycle)
when $\left( \tau_L, \delta_L \right) = \left( -\frac{23}{33}, \frac{13}{66} \right) \approx \left( -0.6970, 0.1970 \right)$ \cite{Si20}.
Fig.~\ref{fig:subHCC}b shows a phase portrait of $f$ at this point,
while Fig.~\ref{fig:schemexD} shows part of a phase portrait of $f$ at a typical nearby parameter point.
Fig.~\ref{fig:bifSetBCNFexD}a shows a bifurcation set of $f$,
while Fig.~\ref{fig:bifSetBCNFexD}b shows a bifurcation set of the corresponding subfamily of $h$.
Again, the bifurcation structure matches well with minor differences developing with the distance from the codimension-two point.

%%%%%%%%%%%%%%%%%%%%%%%%%%%%%%%%%%%%%%%%%%%%%%%%%%%%%%%%%%%%%%%%%%%%%%%%%%%%%%%%%%%%%%%%%%%%%%%%%%%%
\begin{figure}[b!]
\begin{center}
\includegraphics[width=8cm]{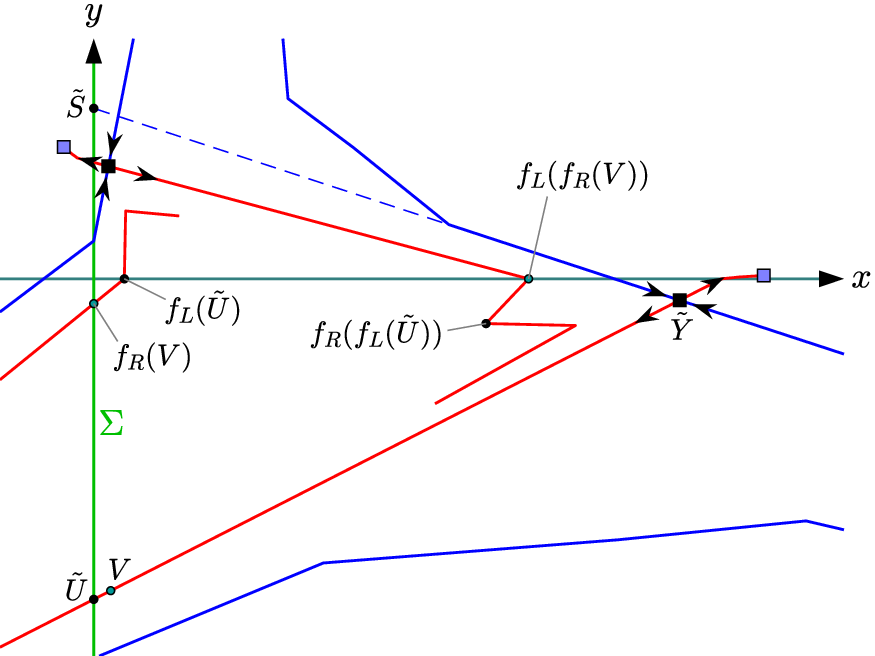}
\caption{
A sketch of the phase space of $f$ 
with $(\tau_L,\delta_L,\tau_R,\delta_R) = (-0.7,0.15,-2.5,2)$.
Two points of a saddle $RLR$-cycle are visible, shown with black squares, and the right-most point is labelled $\tilde{Y}$.
The stable and unstable sets of the $RLR$-cycle are coloured blue and red respectively.
Two points of a stable $RLL$-cycle are visible, shown with blue squares.
\label{fig:schemexD}
} 
\end{center}
\end{figure}
%%%%%%%%%%%%%%%%%%%%%%%%%%%%%%%%%%%%%%%%%%%%%%%%%%%%%%%%%%%%%%%%%%%%%%%%%%%%%%%%%%%%%%%%%%%%%%%%%%%%

%%%%%%%%%%%%%%%%%%%%%%%%%%%%%%%%%%%%%%%%%%%%%%%%%%%%%%%%%%%%%%%%%%%%%%%%%%%%%%%%%%%%%%%%%%%%%%%%%%%%
\begin{figure}[b!]
\begin{center}
\setlength{\unitlength}{1cm}
\begin{picture}(17,7)
\put(.7,0){\includegraphics[height=7cm]{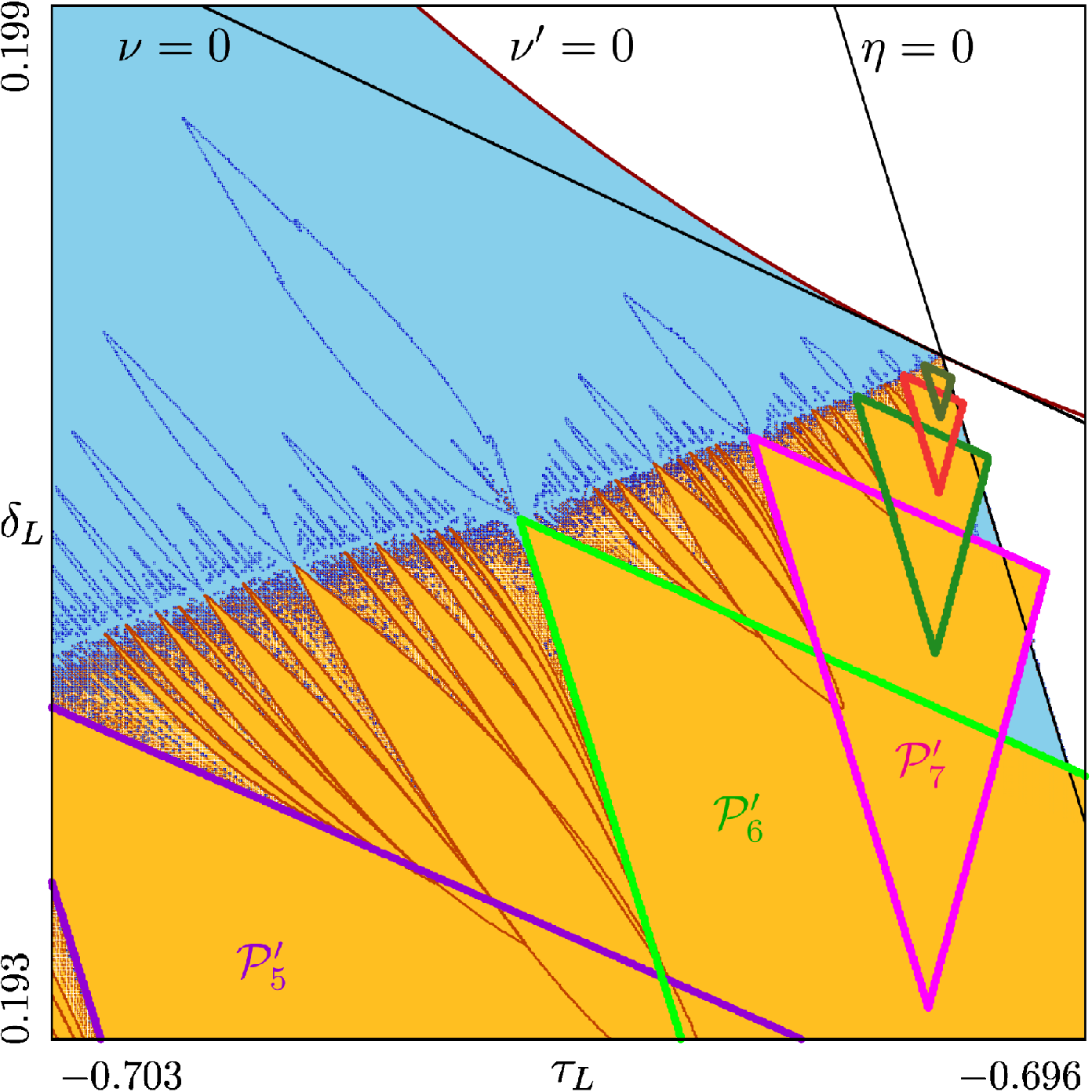}}
\put(10,0){\includegraphics[height=7cm]{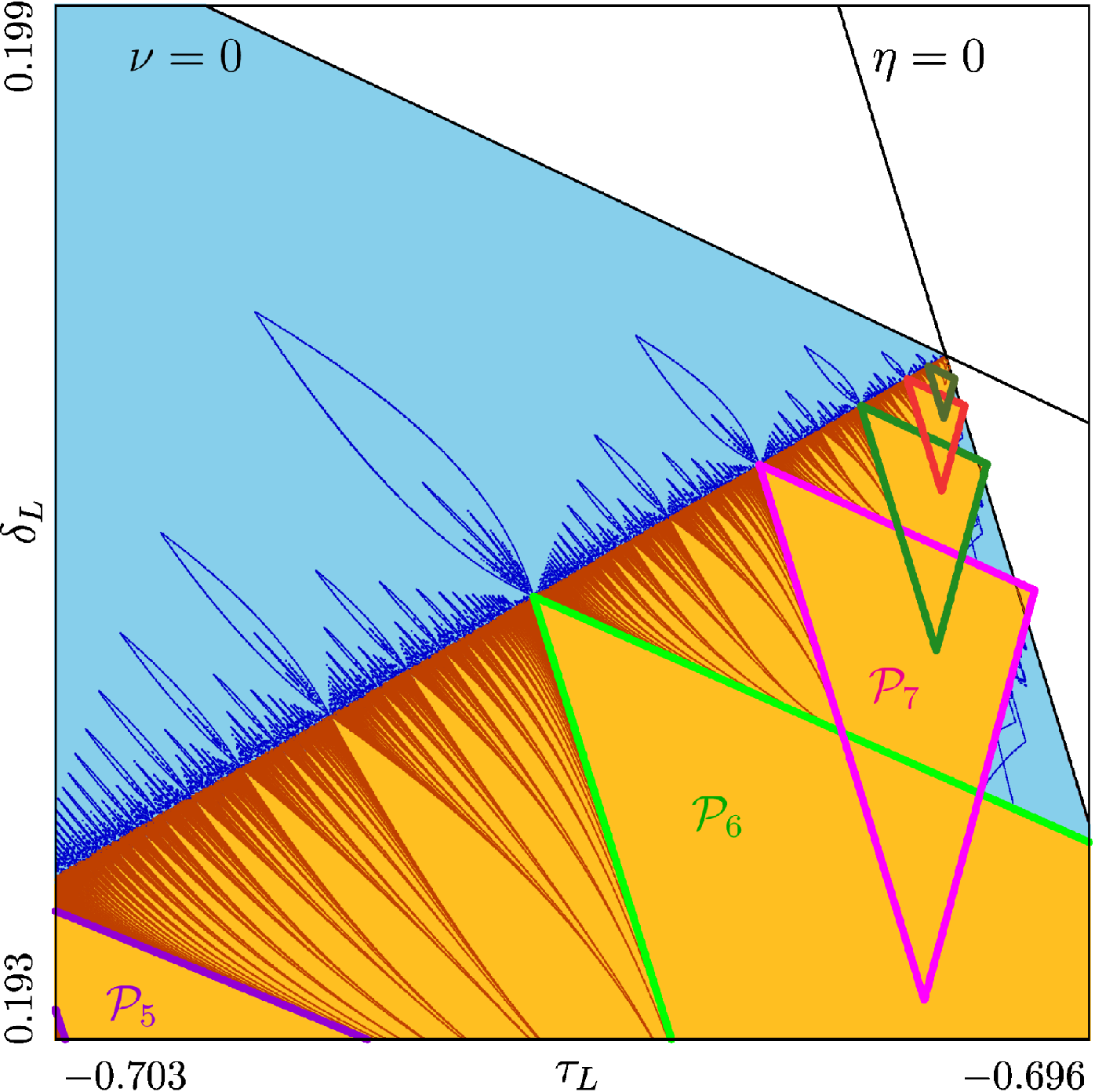}}
\put(0,6.6){{\bf a)}}
\put(9.3,6.6){{\bf b)}}
\end{picture}
\caption{
Panel (a) is a bifurcation set of $f$ with $\tau_R = -2.5$ and $\delta_R = 2$;
panel (b) is a bifurcation set of the corresponding one-dimensional family $h$
for which $\sigma$ is the unstable stability multiplier of the $RLR$-cycle,
and $\eta$ and $\nu$ are given by \eqref{eq:etaPeriod3} and \eqref{eq:nuPeriod3}.
The colour conventions are the same as in Fig.~\ref{fig:bifSetBCNF}.
In each $\cP_k'$ of panel (a), $f$ has a stable $(RLR)^k LR$-cycle.
\label{fig:bifSetBCNFexD}
} 
\end{center}
\end{figure}
%%%%%%%%%%%%%%%%%%%%%%%%%%%%%%%%%%%%%%%%%%%%%%%%%%%%%%%%%%%%%%%%%%%%%%%%%%%%%%%%%%%%%%%%%%%%%%%%%%%%

We now outline the manner by which $\eta$, $\nu$, and $\sigma$ can be defined
for a general subsumed homoclinic connection of $f$, following \cite{Si20},
and apply this to the present example to explain how Fig.~\ref{fig:bifSetBCNFexD}b was produced.
We first pick a suitable point $\tilde{Y}$ in the saddle periodic solution associated with homoclinic connection.
We also identify an interval $\Gamma \subset E^u(\tilde{Y})$, with one endpoint on $\Sigma$, that forms a fundamental domain for
the branch of the unstable set of the periodic solution associated with the homoclinic connection.
By assumption, $\Gamma$ maps to $E^s(\tilde{Y})$ under some sequence of iterations of $f_L$ and $f_R$.
Using $(a,b)$ coordinates aligned with the stable and unstable directions of $\tilde{Y}$,
we define $\eta$ and $\nu$ to be $b$-coordinates of the images of the endpoints of $\Gamma$ under this sequence,
and let $\sigma$ be the unstable eigenvalue associated with the saddle periodic solution.

For the example here, we take $\tilde{Y}$ to be the right-most point of the $RLR$-cycle,
which is the unique fixed point of $f_R \circ f_L \circ f_R$.
Let $\tilde{S}$ and $\tilde{U}$ denote the intersections
of the stable and unstable subspaces $E^s(\tilde{Y})$ and $E^u(\tilde{Y})$ with $\Sigma$, see Fig.~\ref{fig:schemexD}.
Then let $\tilde{U}' = f_R(f_L(f_R(\tilde{U})))$ and
$\Gamma = \left\{ (1-\alpha) \tilde{U} + \alpha \tilde{U}' \,\middle|\, 0 \le \alpha < 1 \right\}$.
At the codimension-two point,
$f_R(f_L(\Gamma))$ belongs to $E^s(\tilde{Y})$
forming a subsumed homoclinic connection.
Thus
\begin{align}
\eta &= b \big( f_R(f_L(\tilde{U}')) \big), \label{eq:etaPeriod3} \\
\nu &= b \big( f_R(f_L(\tilde{U})) \big), \label{eq:nuPeriod3}
\end{align}
where, analogous to \eqref{eq:coordChange},
$(a,b)$-coordinates are defined by
\begin{equation}
P = \tilde{Y} + a (\tilde{S}-\tilde{Y}) + b (\tilde{U}-\tilde{Y}),
\label{eq:coordChangePeriod3}
\end{equation}
for any $P \in \mathbb{R}^2$.
Finally, $\sigma > 1$ is the unstable stability multiplier of the $RLR$-cycle
(an eigenvalue of $(\rD f_R)(\rD f_L)(\rD f_R)$).
For this example, $\sigma = \frac{13}{6}$ at the codimension-two point.

Unlike the previous two examples, the curves $\eta = 0$ and $\nu = 0$
are not bifurcations where the attractor of \eqref{eq:f} is destroyed.
Instead, the attractor is destroyed along the curve $\nu' = 0$, see Fig.~\ref{fig:bifSetBCNFexD}a,
where $\nu' = b \left( f_L(f_R(V)) \right)$
and $V$ is the unique point in $E^u(\tilde{Y})$ for which $f_R(V) \in \Sigma$.
The attractor is also destroyed along a similar curve close to $\eta = 0$
(not shown as it is almost indistinguishable from $\eta = 0$).
These curves are where the stable and unstable sets of the $RLR$-cycle
first attain a non-trivial intersection.

As a final remark, notice that the main periodicity regions $\cP_k'$ and $\cP_k$ overlap the curve $\eta = 0$.
This occurs because $\sigma > 2$, as shown in the next section.

%===============================================================================
\section{Basic properties of the one-dimensional family}
\label{sec:1d}

In this section we establish basic properties of the one-dimensional family $h$, given by \eqref{eq:h}.
We first establish a scaling property, whereby, as we step in parameter space towards $(\eta,\nu) = (0,0)$,
the dynamics is unchanged except occurs on different branches of $h$.
Invariant sets occur on an {\em absorbing interval} $J$ having endpoints $\eta$ and $\nu$,
and we characterise the number of branches $N$ that the map has over this interval.
This guides our more detailed analysis of $h$ in \S\ref{sec:fourStructures},
because where $N \le 2$ the dynamics of $h$ on $J$ are covered by existing theory \cite{AvGa19}.
Lastly in this section we analyse the fixed points of $h$.

%-------------------------------------------------------------------------------
\subsection{Scale invariance}
\label{sub:scaleInvariance}

%%%%%%%%%%%%%%%%%%%%%%%%%%%%%%%%%%%%%%%%%%%%%%%%%%%%%%%%%%%%%%%%%%%%%%%%%%%%%%%%%%%%%%%%%%%%%%%%%%%%
\begin{figure}[b!]
\begin{center}
\includegraphics[width=9cm]{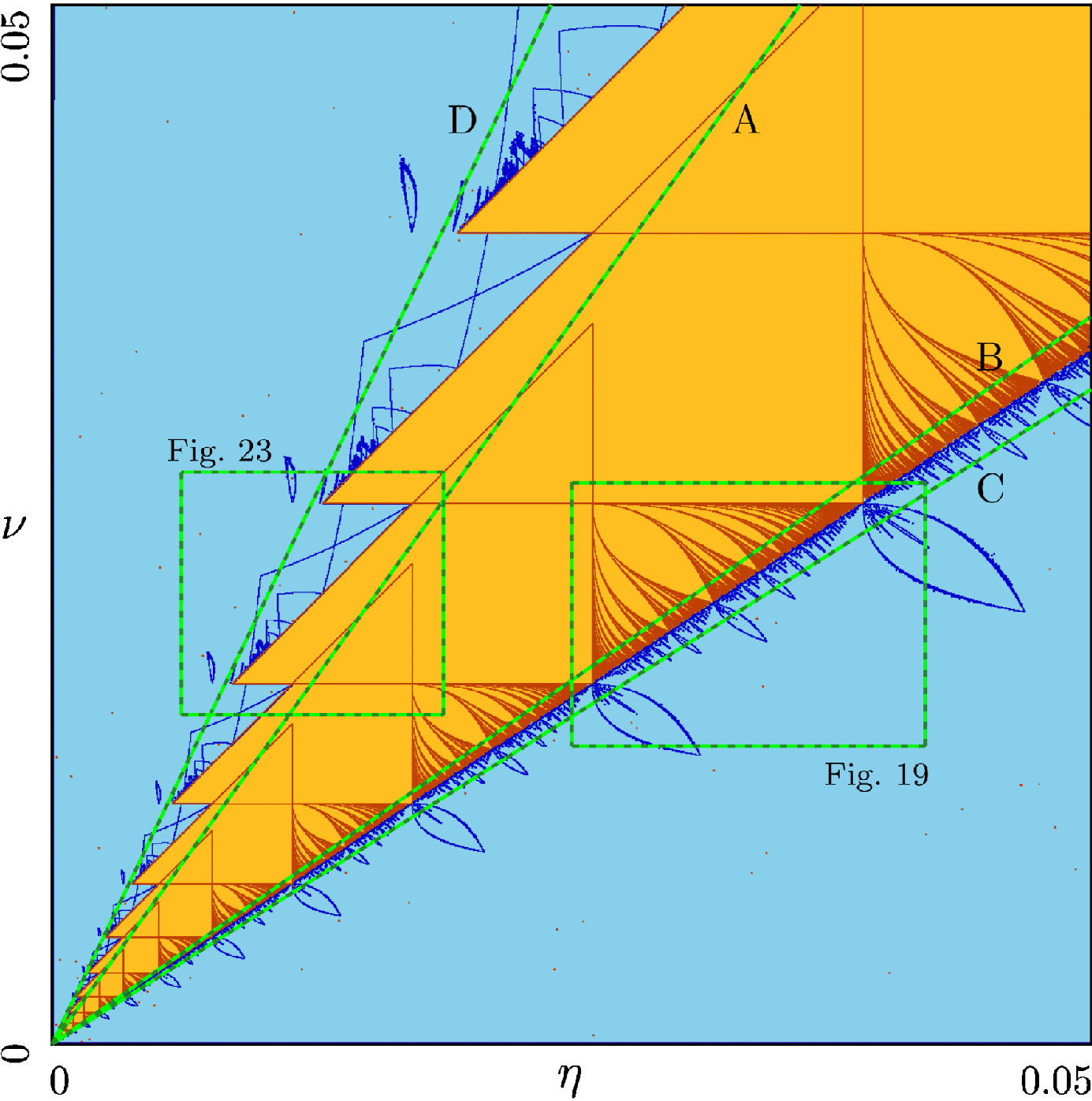}
\caption{
A bifurcation set of $h$ with $\sigma = 1.5$
using the same colour conventions as Fig.~\ref{fig:bifSetBCNF}.
The lines A--D correspond to the one-parameter slices
shown in Figs.~\ref{fig:periodInc}a, \ref{fig:periodAdd}a, \ref{fig:bandcountAdd}a, and \ref{fig:bandcountInc}a.
These are defined by:
A: $\frac{\nu}{\eta} = \frac{0.05}{0.0359} \approx 1.3928$;
B: $\frac{\nu}{\eta} = \frac{0.035}{0.05} = 0.7$;
C: $\frac{\nu}{\eta} = \frac{0.0315}{0.05} = 0.63$;
D: $\frac{\nu}{\eta} = \frac{0.05}{0.0262} \approx 1.9084$.
\label{fig:2D:num}
} 
\end{center}
\end{figure}
%%%%%%%%%%%%%%%%%%%%%%%%%%%%%%%%%%%%%%%%%%%%%%%%%%%%%%%%%%%%%%%%%%%%%%%%%%%%%%%%%%%%%%%%%%%%%%%%%%%%

Fig.~\ref{fig:2D:num} is a bifurcation set of $h$ with $\sigma = 1.5$,
and reveals a pattern that repeats over smaller and smaller areas converging to $(\eta,\nu) = (0,0)$.
This occurs because $h$ enjoys the following scaling property.

%...............................................................................
\begin{proposition}
The map $h$ satisfies
\begin{equation}
h \left( \tfrac{z}{\sigma}; \tfrac{\eta}{\sigma}, \tfrac{\nu}{\sigma}, \sigma \right) = \tfrac{1}{\sigma} h(z;\eta,\nu,\sigma), \qquad
\text{for all $z > 0$, $\eta, \nu \in \mathbb{R}$, and $\sigma > 1$}.
\label{eq:scaleInvariance}
\end{equation}
\label{pr:scaleInvariance}
\end{proposition}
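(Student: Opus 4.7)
The plan is to verify \eqref{eq:scaleInvariance} by direct algebraic computation, using the piecewise definition \eqref{eq:h} together with the explicit formula \eqref{eq:hk} for each branch. The single conceptual input needed is that dividing $z$ by $\sigma$ shifts which interval $I_k$ contains it, and this shift will exactly compensate for the rescaling of $\eta$ and $\nu$.

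First I would fix $z > 0$ and let $k \in \mathbb{Z}$ be such that $z \in I_k$. Since $I_k = \left[ \sigma^{-k}, \sigma^{-(k-1)} \right)$, the inequality $\sigma^{-k} \le z < \sigma^{-(k-1)}$ rescales to $\sigma^{-(k+1)} \le z/\sigma < \sigma^{-k}$, i.e. $z/\sigma \in I_{k+1}$. Consequently, by \eqref{eq:h}, the left-hand side of \eqref{eq:scaleInvariance} equals $h_{k+1}(z/\sigma; \eta/\sigma, \nu/\sigma, \sigma)$, while the right-hand side equals $\sigma^{-1} h_k(z; \eta, \nu, \sigma)$.

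Second, I would substitute these into \eqref{eq:hk} and compare the resulting affine functions of $z$ coefficient by coefficient. For the slope, the left-hand side gives
\begin{equation}
\frac{\eta/\sigma - \nu/\sigma}{\sigma - 1} \,\sigma^{k+1} \cdot \frac{1}{\sigma}
= \frac{\eta - \nu}{\sigma(\sigma - 1)} \,\sigma^k ,
\nonumber
\end{equation}
which matches $\sigma^{-1}$ times the slope $\frac{\eta-\nu}{\sigma-1}\sigma^k$ of $h_k(\cdot;\eta,\nu,\sigma)$. For the intercept, the left-hand side gives
\begin{equation}
\frac{-\eta/\sigma + \sigma(\nu/\sigma)}{\sigma - 1}
= \frac{-\eta + \sigma \nu}{\sigma(\sigma - 1)} ,
\nonumber
\end{equation}
which is $\sigma^{-1}$ times the intercept $\frac{-\eta + \sigma\nu}{\sigma-1}$ of $h_k(\cdot;\eta,\nu,\sigma)$. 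Since $z$ was arbitrary and $k$ is determined by $z$, this establishes \eqref{eq:scaleInvariance}.

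There is no real obstacle here; the proposition is essentially a bookkeeping identity. The only point requiring care is the index shift $k \mapsto k+1$ in the first step, without which the two sides would not visibly match, and which is what makes the statement informative: the self-similar pattern seen in Fig.~\ref{fig:2D:num} arises because successive rescalings by $\sigma^{-1}$ in parameter space reproduce the same dynamics, merely with the roles of the branches $h_k$ relabelled.
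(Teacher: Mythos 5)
Your proof is correct and follows exactly the same route as the paper's: identify that $z \in I_k$ implies $z/\sigma \in I_{k+1}$, so the two sides reduce to $h_{k+1}\left(\tfrac{z}{\sigma};\tfrac{\eta}{\sigma},\tfrac{\nu}{\sigma},\sigma\right)$ and $\tfrac{1}{\sigma}h_k(z;\eta,\nu,\sigma)$, which agree by \eqref{eq:hk}. The only difference is that you write out the slope and intercept comparison explicitly, where the paper leaves that algebra implicit.
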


%...............................................................................
\begin{proof}
Fix $z > 0$, $\eta, \nu \in \mathbb{R}$, and $\sigma > 1$,
and let $k \in \mathbb{Z}$ be such that $z \in I_k$.
The right-hand side of \eqref{eq:scaleInvariance}
is $\frac{1}{\sigma} h_k(z;\eta,\nu,\sigma)$,
while the left-hand side of \eqref{eq:scaleInvariance}
is $h_{k+1} \left( \frac{z}{\sigma}; \frac{\eta}{\sigma}, \frac{\nu}{\sigma}, \sigma \right)$.
By \eqref{eq:hk} these are identical.
\end{proof}

Thus if $\mathcal{X} \subset (0,\infty)$ is an orbit of $h(z;\eta,\nu,\sigma)$,
then $\frac{\mathcal{X}}{\sigma}$ is an orbit of $h \left( z; \frac{\eta}{\sigma}, \frac{\nu}{\sigma}, \sigma \right)$.
So when $\eta$ and $\nu$ are divided by $\sigma$,
the dynamics are unchanged except they occur at values of $z$ scaled by $\frac{1}{\sigma}$.
These dynamics occur on the branches of $h$ shifted one place to the left,
so correspond to values of $k$ that are increased by one.

In the context of the dynamics of $f$,
each increment in the value of $k$ corresponds to additional iterate of $f$ close to the saddle (see Remark \ref{re:k}).
More precisely, if $h(z;\eta,\nu;\sigma)$ has a period-$M$ solution corresponding to a period-$N$ solution of $f$,
then for any $j \ge 1$ the map $h \left( z; \frac{\eta}{\sigma^j}, \frac{\nu}{\sigma^j}, \sigma \right)$
has a period-$M$ solution corresponding to a period-$(j M + N)$ solution of $f$.

%-------------------------------------------------------------------------------
\subsection{Number of branches over the absorbing interval}
\label{sub:numberOfBranches}

If $\eta > \nu$ then each piece $h_k$ is increasing,
while if $\eta < \nu$ then each $h_k$ is decreasing.
Each $h_k$ has the same range, namely the set
\begin{equation}
J = \begin{cases} (\eta,\nu], & \eta < \nu, \\
\{ \eta \}, & \eta = \nu, \\
[\nu, \eta), & \eta > \nu.
\end{cases}
\label{eq:absorbingInterval}
\end{equation}
Any compact neighbourhood of $J$ in $(0,\infty)$ is a {\em trapping region} of $h$ \cite{Ro04,Ru17},
while if $\eta \ne \nu$, then $J$ is an {\em absorbing interval} \cite[pg.~12]{AvGa19}. 

Since any invariant set of $h$ must belong to $J$,
it suffices to consider $h$ on $J$.
Let $N(\eta,\nu,\sigma)$ be the number of intervals $I_k$
that have a non-empty intersection with $J$.
Fig.~\ref{fig:2D:count} shows how $N$ varies over the bifurcation set,
and the following result provides a formula for $N$.
At most parameter points $N$ is simply
the value of $k$ at ${\rm max}[\eta,\nu]$,
minus the value of $k$ at ${\rm min}[\eta,\nu]$, plus one,
where the $k$-values are given explicitly by the formula \eqref{eq:kFormula}.
An adjustment is needed to handle special cases where
$\frac{-\ln(\eta)}{\ln(\sigma)}$ or $\frac{-\ln(\nu)}{\ln(\sigma)}$ are integers.

%%%%%%%%%%%%%%%%%%%%%%%%%%%%%%%%%%%%%%%%%%%%%%%%%%%%%%%%%%%%%%%%%%%%%%%%%%%%%%%%%%%%%%%%%%%%%%%%%%%%
\begin{figure}[b!]
\begin{center}
\includegraphics[width=9cm]{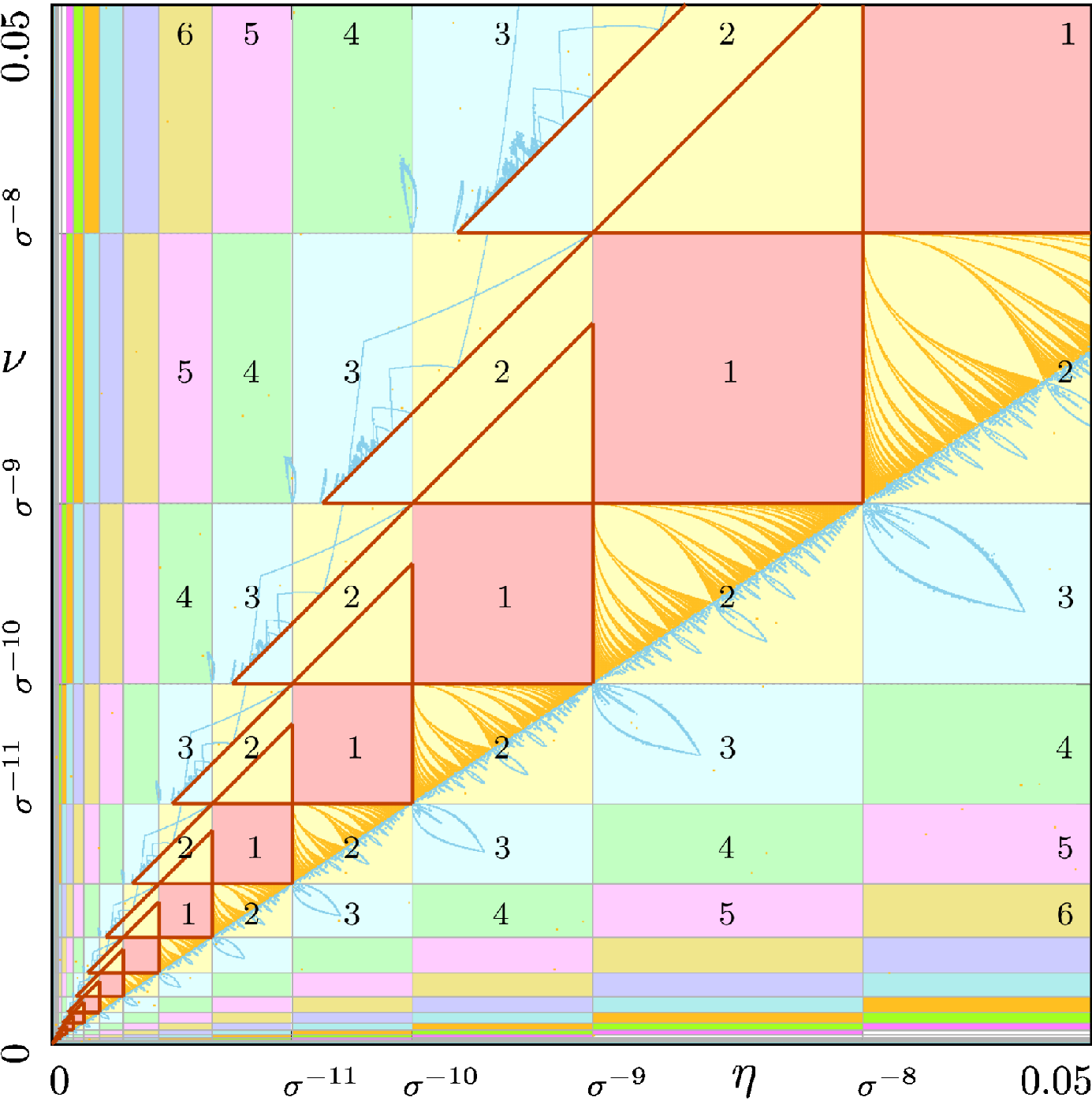}
\caption{
A division of the $(\eta,\nu)$-plane
by the number of pieces $N$ of $h$ over the absorbing interval $J$.
For example $N=1$ in the central pink rectangles.
This diagram was drawn using $\sigma = 1.5$ and we have overlaid the bifurcation curves of Fig.~\ref{fig:2D:num}.
\label{fig:2D:count}
} 
\end{center}
\end{figure}
%%%%%%%%%%%%%%%%%%%%%%%%%%%%%%%%%%%%%%%%%%%%%%%%%%%%%%%%%%%%%%%%%%%%%%%%%%%%%%%%%%%%%%%%%%%%%%%%%%%%

%...............................................................................
\begin{proposition}
For any $\eta > 0$, $\nu > 0$ and $\sigma > 1$,
\begin{equation}
N(\eta,\nu,\sigma) = \begin{cases}
\left\lceil \frac{-\ln(\eta)}{\ln(\sigma)} \right\rceil - \left\lceil \frac{-\ln(\nu)}{\ln(\sigma)} \right\rceil + 1,
& \eta \le \nu, \\[2mm]
\left\lceil \frac{-\ln(\nu)}{\ln(\sigma)} \right\rceil - \left\lfloor \frac{-\ln(\eta)}{\ln(\sigma)} \right\rfloor,
& \eta > \nu.
\end{cases}
\label{eq:N}
\end{equation}
\label{pr:N}
\end{proposition}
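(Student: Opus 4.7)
The plan is to count, in each of the two cases $\eta \le \nu$ and $\eta > \nu$, the maximum and minimum values of $k$ for which $I_k \cap J$ is non-empty. Because the intervals $I_k = [\sigma^{-k}, \sigma^{-(k-1)})$ partition $(0,\infty)$ and $k \mapsto \sigma^{-k}$ is monotonically decreasing, the set of intersecting indices forms a block of consecutive integers, so $N = k_{\max} - k_{\min} + 1$. I will use the formula $K(z,\sigma) = \lceil -\ln(z)/\ln(\sigma) \rceil$ from \eqref{eq:kFormula} that returns the unique $k$ with $z \in I_k$.

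First I treat the case $\eta \le \nu$, where $J = (\eta,\nu]$ (degenerating to $\{\eta\}$ if $\eta = \nu$). Since $\nu \in J$ is the largest element of $J$ and $k$ decreases in $z$, it is immediate that $k_{\min} = K(\nu,\sigma) = \lceil -\ln(\nu)/\ln(\sigma) \rceil$. To find $k_{\max}$, I need the index of intervals containing points $\eta + \varepsilon$ for arbitrarily small $\varepsilon > 0$. Because each $I_k$ is closed on the left and open on the right, whether $\eta$ sits strictly inside $I_{K(\eta,\sigma)}$ or equals its left endpoint $\sigma^{-K(\eta,\sigma)}$, the point $\eta + \varepsilon$ belongs to $I_{K(\eta,\sigma)}$ for sufficiently small $\varepsilon$. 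Thus $k_{\max} = \lceil -\ln(\eta)/\ln(\sigma) \rceil$, producing the first branch of \eqref{eq:N}.

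For the case $\eta > \nu$, $J = [\nu,\eta)$, and so $k_{\max} = K(\nu,\sigma) = \lceil -\ln(\nu)/\ln(\sigma) \rceil$ by the same reasoning, but the subtlety now appears at the open endpoint $\eta$. If $-\ln(\eta)/\ln(\sigma)$ is not an integer, then $\eta$ lies strictly inside $I_{K(\eta,\sigma)}$, points $\eta - \varepsilon$ remain in the same interval for small $\varepsilon$, and $k_{\min} = K(\eta,\sigma) = \lfloor -\ln(\eta)/\ln(\sigma) \rfloor + 1$. If instead $-\ln(\eta)/\ln(\sigma)$ is an integer, then $\eta = \sigma^{-K(\eta,\sigma)}$ is the left endpoint of $I_{K(\eta,\sigma)}$, points $\eta - \varepsilon$ drop into $I_{K(\eta,\sigma)+1}$, and $k_{\min} = K(\eta,\sigma) + 1 = \lfloor -\ln(\eta)/\ln(\sigma) \rfloor + 1$. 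In both subcases $k_{\min} = \lfloor -\ln(\eta)/\ln(\sigma) \rfloor + 1$, so substitution into $N = k_{\max} - k_{\min} + 1$ yields the second branch of \eqref{eq:N}.

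The only real obstacle is the bookkeeping when $\eta$ lands exactly on a grid point $\sigma^{-k}$; the asymmetric appearance of $\lceil \cdot \rceil$ in one branch of \eqref{eq:N} and $\lfloor \cdot \rfloor$ in the other is precisely what unifies the generic and boundary sub-cases into a single closed-form expression on each side of the diagonal $\eta = \nu$.
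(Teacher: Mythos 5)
Your proof is correct and follows essentially the same route as the paper's: both write $N = k_{\max} - k_{\min} + 1$, use the formula $K(z,\sigma)$ to index the branch containing a given $z$, and resolve the endpoint bookkeeping by splitting on whether $-\ln(\eta)/\ln(\sigma)$ is an integer, arriving at the same unified floor/ceiling expressions in each case. The only (inessential) difference is that you explicitly note the intersecting indices form a consecutive block, which the paper takes for granted.
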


%...............................................................................
\begin{proof}
We have $N = k_{\rm max} - k_{\rm min} + 1$, where
$k_{\rm max}$ is the $k$-value of the left-most branch of $h$ over $J$,
and $k_{\rm min}$ is the $k$-value of the right-most branch of $h$ over $J$.

First suppose $\eta > \nu$.
Then $J = [\nu,\eta)$, hence $k_{\rm max} = K(\nu,\sigma) = \left\lceil \frac{-\ln(\nu)}{\ln(\sigma)} \right\rceil$.
Also $k_{\rm min} = \lim_{z \to \eta^-} K(z,\sigma)$,
so if $\frac{-\ln(\eta)}{\ln(\sigma)} \notin \mathbb{Z}$
then $k_{\rm min} = K(\eta,\sigma) = \left\lceil \frac{-\ln(\nu)}{\ln(\sigma)} \right\rceil$,
which is the same as $\left\lfloor \frac{-\ln(\eta)}{\ln(\sigma)} \right\rfloor + 1$.
If instead $\frac{-\ln(\eta)}{\ln(\sigma)} \in \mathbb{Z}$,
then $k_{\rm min} = K(\eta,\sigma) + 1$,
so again $k_{\rm min} = \left\lfloor \frac{-\ln(\eta)}{\ln(\sigma)} \right\rfloor + 1$ as required.

Now suppose $\eta < \nu$, so then $J = (\eta,\nu]$.
Then $k_{\rm min} = K(\nu,\sigma)$ and $k_{\rm max} = \lim_{z \to \eta^+} K(z,\sigma)$.
In this case $k_{\rm max} = K(\eta,\sigma)$ regardless of whether or not
the value of $\frac{-\ln(\eta)}{\ln(\sigma)}$ is an integer, verifying \eqref{eq:N} in this case.
Finally, \eqref{eq:N} holds when $\eta = \nu$, because here $J$ is a single point.
\end{proof}

%-------------------------------------------------------------------------------
\subsection{Fixed points}
\label{sub:fixedPoints}

The slope of each $h_k$ is
\begin{equation}
s_k = \frac{\eta - \nu}{\sigma - 1} \,\sigma^k.
\label{eq:sk}
\end{equation}
If $s_k \ne 1$ then the fixed point equation $z = h_k(z)$ has the unique solution
\begin{equation}
z_k^* = \frac{-\eta + \sigma \nu}{\sigma - 1 - (\eta - \nu) \sigma^k}.
\label{eq:zkStar}
\end{equation}
This is a fixed point of $h$ if it belongs to the interval $I_k$,
with stability governed by the slope $s_k$.
These considerations lead to the following result
(which ignores the degenerate case $\eta = \nu$ for which $z_k^* = \eta$ for all $k \in \mathbb{Z}$).

%...............................................................................
\begin{proposition}
Consider $\eta, \nu \in \mathbb{R}$ with $\eta \ne \nu$, $\sigma > 1$, and $k \in \mathbb{Z}$.
Then $z_k^*$ is an asymptotically stable fixed point of $h$ if and only if $(\eta,\nu) \in \cP_k$, where
\begin{equation}
\cP_k = \left\{ (\eta,\nu) \in \mathbb{R}^2 \,\middle|\,
\eta < \sigma^{-k+1},\, \sigma^{-k} < \nu < \eta + \sigma^{-k+1} - \sigma^{-k} \right\}.
\label{eq:Pk}
\end{equation}
\label{pr:Pk}
\end{proposition}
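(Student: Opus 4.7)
The plan is to characterize asymptotic stability of $z_k^*$ as the combination of two conditions --- (a) $z_k^*$ lies in the \emph{interior} of $I_k$, and (b) the multiplier satisfies $|s_k|<1$ --- and then translate (a) and (b) into the three inequalities defining $\cP_k$ using the identities $h_k(\sigma^{-k})=\nu$ and $h_k(\sigma^{-k+1})=\eta$, both obtained by direct substitution into \eqref{eq:hk}.

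To justify that (a) and (b) are the right conditions, I would note that since $h$ restricted to $I_k$ equals $h_k$, the value $z_k^*$ is a fixed point of $h$ iff $z_k^*\in I_k$; and when $z_k^*$ lies in the interior of $I_k$ a neighborhood on which $h$ is continuous surrounds it, so asymptotic stability reduces to $|s_k|<1$ in the standard way. If instead $z_k^*=\sigma^{-k}$, a direct computation gives $h_{k+1}(\sigma^{-k})=\eta$, so $h$ has a jump discontinuity at $z_k^*$ (unless $\eta=\sigma^{-k}$), and points just to the left of $z_k^*$ get mapped near $\eta$ rather than back to $z_k^*$; hence asymptotic stability requires $z_k^*$ to lie strictly inside $I_k$. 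The right endpoint $\sigma^{-k+1}$ is excluded from $I_k$ and therefore cannot equal $z_k^*$.

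The translation step is then direct. When $s_k<1$ the graph of $h_k$ crosses the diagonal from above, so $z_k^*>\sigma^{-k}$ iff $\nu>\sigma^{-k}$, and $z_k^*<\sigma^{-k+1}$ iff $\eta<\sigma^{-k+1}$. Subtracting these two inequalities yields $\eta-\nu<(\sigma-1)\sigma^{-k}$, which by \eqref{eq:sk} is exactly $s_k<1$, so the first two inequalities of \eqref{eq:Pk} together encode ``$z_k^*\in\mathrm{int}(I_k)$ with $s_k<1$.'' The remaining stability bound $s_k>-1$ rearranges into $\nu<\eta+\sigma^{-k+1}-\sigma^{-k}$, the third inequality of \eqref{eq:Pk}. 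The case $s_k>1$ (where $h_k$ crosses the diagonal from below) forces $\nu<\sigma^{-k}$ or $\eta>\sigma^{-k+1}$ whenever $z_k^*\in I_k$, so it is automatically excluded by \eqref{eq:Pk}.

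The main obstacle is the boundary case $z_k^*=\sigma^{-k}$: because $h$ is discontinuous across $\sigma^{-k}$, one must argue carefully that asymptotic (not merely Lyapunov) stability fails there, which is what forces the strict inequality $\nu>\sigma^{-k}$ in $\cP_k$. Once this is settled, the remaining equivalences are routine algebraic manipulations packaged by the two endpoint evaluations of $h_k$.
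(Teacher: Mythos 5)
Your proposal is correct and follows essentially the same route as the paper's proof: reduce asymptotic stability to the two conditions $z_k^*\in\mathrm{int}(I_k)$ and $|s_k|<1$ (using the discontinuity of $h$ at the endpoints of $I_k$ to rule out the boundary case), then translate these into the three inequalities defining $\cP_k$. The only cosmetic difference is that you obtain the equivalences $z_k^*>\sigma^{-k}\Leftrightarrow\nu>\sigma^{-k}$ and $z_k^*<\sigma^{-k+1}\Leftrightarrow\eta<\sigma^{-k+1}$ via the endpoint evaluations $h_k(\sigma^{-k})=\nu$, $h_k(\sigma^{-k+1})=\eta$ and monotonicity of $h_k(z)-z$, whereas the paper substitutes the explicit formula \eqref{eq:zkStar} directly; both are routine and equivalent.
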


%...............................................................................
\begin{proof}
If $z_k^*$ belongs to an endpoint of $I_k$,
then $z_k^*$ cannot be Lyapunov stable (so certainly not asymptotically stable)
because \eqref{eq:h} is discontinuous at the endpoints of $I_k$.
Thus $z_k^*$ is an asymptotically stable fixed point of $h$ if and only if it belongs to the interior of $I_k$ and $|s_k| < 1$.
If $s_k \ge 1$, then by \eqref{eq:sk} $(\eta,\nu) \notin \cP_k$, so it remains to consider $s_k < 1$.
In this case $z_k^* \in {\rm int}(I_k)$ is equivalent to
$\frac{-\eta + \sigma \nu}{\sigma - 1 - (\eta - \nu) \sigma^k} > \sigma^{-k}$ and
$\frac{-\eta + \sigma \nu}{\sigma - 1 - (\eta - \nu) \sigma^k} < \sigma^{-k+1}$,
which are equivalent to $\nu > \sigma^{-k}$ and $\eta < \sigma^{-k+1}$ respectively.
Also $s_k > -1$ is equivalent to $\nu < \eta + \sigma^{-k+1} - \sigma^{-k}$.
\end{proof}

For any fixed $\sigma > 1$, each $\cP_k$ is a triangle in the $(\eta,\nu)$-plane. 
The edges $\eta = \sigma^{-k+1}$ and $\nu = \sigma^{-k}$
are border-collision bifurcations where $z_k^*$ meets an endpoint of $I_k$.
Along the third edge $\nu = \eta + \sigma^{-k+1} - \sigma^{-k}$
the fixed point $z_k^*$ loses stability because its stability multiplier becomes $-1$.
This edge does not correspond to a flip (or period-doubling) bifurcation because $h_k$ is linear
so cannot support an isolated period-two solution.
Instead it is a {\em degenerate flip bifurcation},
which is a standard bifurcation for piecewise-linear families
that typically generates a multi-band chaotic attractor \cite{AvGa19}.

The bottom-right vertex of $\cP_k$ is $\left( \sigma^{-k+1}, \sigma^{-k} \right)$,
the top vertex of $\cP_k$ is $\left( \sigma^{-k+1}, (2 \sigma - 1) \sigma^{-k} \right)$,
and the left vertex of $\cP_k$ is $\left( (2-\sigma) \sigma^{-k}, \sigma^{-k} \right)$.
Thus if $1 < \sigma < 2$, as in Fig.~\ref{fig:2D:count},
each $\cP_k$ is contained in the first quadrant of the $(\eta,\nu)$-plane,
while if $\sigma > 2$, as in Fig.~\ref{fig:bifSetBCNFexD}b,
the left vertices of $\cP_k$ protrude over $\eta = 0$.

The next result characterises intersections between the triangles $\cP_k$.

%...............................................................................
\begin{proposition}
Let $\sigma > 1$ and $k_1, k_2 \in \mathbb{Z}$ be distinct.
Then $\cP_{k_1} \cap \cP_{k_2} \ne \varnothing$ if and only if $|k_1 - k_2| = 1$.
\label{pr:triangleIntersections}
\end{proposition}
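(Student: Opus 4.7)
The plan is to handle the ``only if'' and ``if'' directions separately, in both cases exploiting that the defining inequalities for $\cP_k$ depend on the geometric quantities $\sigma^{-k}$ and $\sigma^{-k+1}$. Without loss of generality I would assume $k_2 > k_1$ and write $j = k_2 - k_1 \ge 1$.

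For the case $j \ge 2$ (showing $\cP_{k_1} \cap \cP_{k_2} = \varnothing$), I would extract the tightest upper bound on $\nu$ from membership in $\cP_{k_2}$ and the tightest lower bound from $\cP_{k_1}$. Specifically, any $(\eta,\nu) \in \cP_{k_2}$ satisfies $\eta < \sigma^{-k_2+1}$ together with $\nu < \eta + \sigma^{-k_2+1} - \sigma^{-k_2}$, which forces $\nu < (2\sigma - 1)\sigma^{-k_2}$; on the other hand membership in $\cP_{k_1}$ forces $\nu > \sigma^{-k_1}$. Non-empty intersection would therefore require $\sigma^{j} < 2\sigma - 1$, but $\sigma^{2} - (2\sigma - 1) = (\sigma-1)^{2} > 0$, and $\sigma^{j} \ge \sigma^{2}$ for $j \ge 2$, so this is impossible.

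For the case $j = 1$ I would construct an explicit point. Inspection of the defining inequalities suggests choosing $\eta$ slightly less than $\sigma^{-k_1}$ and $\nu$ slightly greater than $\sigma^{-k_1}$; concretely I would verify that for sufficiently small $\varepsilon > 0$ the point $(\eta,\nu) = (\sigma^{-k_1} - \varepsilon,\, \sigma^{-k_1} + \varepsilon)$ lies in $\cP_{k_1} \cap \cP_{k_1+1}$. Checking membership in $\cP_{k_1}$ amounts to $\eta < \sigma^{-k_1+1}$ (trivial since $\sigma > 1$), $\nu > \sigma^{-k_1}$ (by construction), and $\nu < \eta + (\sigma-1)\sigma^{-k_1}$, which reduces to $2\varepsilon < (\sigma-1)\sigma^{-k_1}$. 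Checking membership in $\cP_{k_1+1}$ amounts to $\eta < \sigma^{-k_1}$ (by construction), $\nu > \sigma^{-k_1-1}$ (immediate since $\sigma > 1$), and $\nu < \eta + (\sigma-1)\sigma^{-k_1-1}$, which reduces to $2\varepsilon < (\sigma-1)\sigma^{-k_1-1}$. The latter is the binding constraint, and any $\varepsilon \in \bigl(0,\tfrac{1}{2}(\sigma-1)\sigma^{-k_1-1}\bigr)$ works.

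The argument is essentially quantitative bookkeeping rather than conceptually hard; the only place where a subtlety could arise is in the $j=1$ case, where one must confirm that the strict inequality on the slanted edge $\nu < \eta + \sigma^{-k+1} - \sigma^{-k}$ is genuinely loose for at least one point in the overlap. This is where I expect to spend the most care, and it is resolved by the computation above showing that the binding constraint comes from $\cP_{k_1+1}$ and is satisfied for all small enough $\varepsilon$.
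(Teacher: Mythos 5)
Your proof is correct and follows essentially the same route as the paper: the disjointness for $|k_1-k_2|\ge 2$ rests on the identical inequality $\sigma^{j}\ge\sigma^{2}>2\sigma-1$ (via $(\sigma-1)^{2}>0$) comparing the supremum of $\nu$ over $\cP_{k_2}$ with its infimum over $\cP_{k_1}$, and the overlap for $|k_1-k_2|=1$ is shown by exhibiting a point in the intersection. The only cosmetic difference is the witness point: the paper uses the top-right corner of $\cP_{k+1}$ and checks it lies in $\cP_k$, whereas your $(\sigma^{-k_1}-\varepsilon,\,\sigma^{-k_1}+\varepsilon)$ is an explicitly interior point of both triangles, which if anything is slightly cleaner given that the $\cP_k$ are open.
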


%...............................................................................
\begin{proof}
For any $k \in \mathbb{Z}$ the top-right corner of $\cP_{k+1}$
is $(\eta,\nu) = \left( \sigma^{-k}, (2 \sigma - 1) \sigma^{-k-1} \right)$.
By \eqref{eq:Pk} this point lies in the interior of $\cP_k$,
thus $\cP_{k+1} \cap \cP_k \ne \varnothing$.
The top-right corner of $\cP_{k+j}$
is $(\eta,\nu) = \left( \sigma^{-k-j+1}, (2 \sigma - 1) \sigma^{-k-j} \right)$.
For all $j \ge 2$,
\begin{equation}
(2 \sigma - 1) \sigma^{-k-j} \le (2 \sigma - 1) \sigma^{-k-2}
= -(\sigma-1)^2 \sigma^{-k-2} + \sigma^{-k}
< \sigma^{-k},
\nonumber
\end{equation}
so this corner lies below the bottom edge $\nu = \sigma^{-k}$ of $\cP_k$,
thus $\cP_{k+j} \cap \cP_k = \varnothing$.
\end{proof}

We now prove that $h$ has no other attractors at parameter points in $\cP_k$.
In the special case that $(\eta,\nu) \in \cP_k$ belongs to the
$\nu = \eta + \sigma^{-k} - \sigma^{-k-1}$ boundary of $\cP_{k+1}$,
the map has an uncountable family of period-two solutions in $J \cap I_{k+1}$,
but these do not produce an attractor.

%...............................................................................
\begin{proposition}
Let $\sigma > 1$, $k \in \mathbb{Z}$, and $(\eta,\nu) \in \cP_k$.
The only possible topological attractors of $h$ are the fixed points
$z_{k-1}^*$, $z_k^*$, and $z_{k+1}^*$.
\label{pr:fpGlobalAttractor}
\end{proposition}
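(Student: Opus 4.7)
The plan is to restrict analysis to the absorbing interval $J$ from \eqref{eq:absorbingInterval}, inside which every topological attractor must lie, and split on the sign of $\eta - \nu$. If $\eta \ge \nu$, the defining inequalities \eqref{eq:Pk} of $\cP_k$ force $J = [\nu, \eta) \subset (\sigma^{-k}, \sigma^{-k+1}) \subset I_k$ (with $J = \{\eta\} = \{z_k^*\}$ when $\eta = \nu$), so $h|_J = h_k|_J$ is a single affine map with $|s_k|<1$, whose unique fixed point $z_k^*$ is the only attractor in $J$.

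The main case is $\eta < \nu$, where $J = (\eta,\nu]$ may cross several branches $I_j$. From $\nu < \eta + \sigma^{-k+1} - \sigma^{-k}$ and $\eta < \sigma^{-k+1}$ one obtains $\nu < (2\sigma - 1)\sigma^{-k} < \sigma^{-k+2}$, so $J$ cannot intersect any $I_j$ with $j \le k - 2$. However, $J$ may extend into $I_{k+2}, I_{k+3}, \ldots$ when $\eta$ is small, and the core of the argument is to rule out attractors supported in $R := \bigcup_{j \ge k+2}(J \cap I_j)$. For each $j \ge k + 2$, Proposition \ref{pr:triangleIntersections} gives $(\eta,\nu) \notin \cP_j$, so by Proposition \ref{pr:Pk} either $|s_j| \ge 1$ or $z_j^* \notin I_j$, meaning $h_j$ admits no asymptotically stable fixed point in $I_j$. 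Moreover, each $h_j$ is an affine bijection from $I_j$ onto $J$, so the relative Lebesgue measure $|h_j^{-1}(R) \cap I_j|/|I_j|$ is exactly $|R|/|J|$; since $J \cap I_k \ni z_k^*$ is a nonempty subset of $J \setminus R$, this ratio is strictly less than $1$. Iterating this uniform bound gives that the set of orbits staying in $R$ for all time has Lebesgue measure zero, and a topological argument --- propagating open intervals forward under the affine branches until they strike $J \setminus R$ --- upgrades this to empty interior, so no topological attractor can lie (even partially) in $R$.

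Every topological attractor is therefore contained in $\tilde J := J \cap (I_{k-1} \cup I_k \cup I_{k+1})$, on which $h$ has at most three affine branches $h_{k-1}, h_k, h_{k+1}$; $h_k$ is always contracting and $h_{k\pm1}$ is contracting precisely when $(\eta,\nu) \in \cP_{k\pm1}$. A direct case analysis of this one-, two-, or three-piece map --- exploiting the trapping neighborhood of $z_k^*$ inside $J \cap I_k$ and the fact that any expanding branch $h_{k\pm1}$ pushes orbits monotonically out of its domain into a region attracted by the neighbouring contracting branch --- shows every forward orbit in $\tilde J$ converges to a point in $\{z_{k-1}^*, z_k^*, z_{k+1}^*\}$, completing the proof. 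The main obstacle is the topological upgrade in the preceding step: converting the measure-zero decay of ``orbits trapped in $R$'' into the statement that the trapped set has empty interior, so it cannot support an attractor with open basin.
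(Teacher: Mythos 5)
Your overall frame (restrict to the absorbing interval $J$, observe that $h_k$ is contracting and forward-invariant on $J\cap I_k$, and show the remaining branches funnel everything there) matches the paper's in spirit, and your first case $\eta\ge\nu$ is correct and complete. But in the main case $\eta<\nu$ there are genuine gaps. The step you yourself flag as ``the main obstacle'' is exactly the crux, and it is not resolved: the measure-theoretic estimate is a red herring (a topological attractor inside $R$ is not excluded by the set of orbits \emph{permanently} trapped in $R$ having measure zero, since attracted orbits may leave and re-enter $R$), and the ``topological upgrade'' needs an engine you never supply. That engine is uniform expansion: one must show that every branch of $h$ over $J$ other than the contracting one(s) has slope of magnitude greater than $1$. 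This is true --- for instance, if $J$ meets $I_{k+2}$ then $\eta<\sigma^{-k-1}$ and $\nu>\sigma^{-k}$ give $\nu-\eta>(\sigma-1)\sigma^{-k-1}$, hence $|s_{k+1}|>1$ and so $|s_j|>1$ for all $j\ge k+1$ --- but your observation that $(\eta,\nu)\notin\cP_j$ yields only ``$|s_j|\ge1$ or $z_j^*\notin I_j$'', which is too weak to force open intervals to grow. With expansion in hand, the paper's argument is that an open interval either eventually meets $I_k$ or eventually contains a discontinuity point $\sigma^{-j}$, in which case its next image contains points near $h_j(\sigma^{-j})=\nu\in I_k$; you need this (or something equivalent) spelled out.

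Two of your closing claims are also false as stated. Orbits are not confined to $\tilde J$ once attractors in $R$ are ``ruled out'' --- every branch maps onto all of $J$, including $R$ --- so the final ``direct case analysis'' is not of a self-contained one-, two-, or three-piece map. And it is not true that \emph{every} forward orbit in $\tilde J$ converges to one of $z_{k-1}^*, z_k^*, z_{k+1}^*$: when expanding branches are present, $h$ has unstable fixed points, periodic points, and possibly a chaotic repeller coexisting with $z_k^*$ (the paper notes this explicitly right after the proposition). The correct and sufficient statement is purely topological: every open subinterval of $J$ contains a point whose orbit reaches the forward-invariant contracting piece, so no set disjoint from the stable fixed points can attract an open neighbourhood. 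Relatedly, a decreasing expanding branch does not push orbits ``monotonically'' out of its domain --- escape is oscillatory --- and individual orbits on the repeller need not escape at all.
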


%...............................................................................
\begin{proof}
If $\eta \ge \sigma^{-k}$ and $\nu < \sigma^{-k+1}$,
then $N(\eta,\nu,\sigma) = 1$ by \eqref{eq:N}.
In this case $h|_J$ is affine with asymptotically stable fixed point $z_k^*$
so all forward orbits of $h$ converge to $z_k^*$.

If $\nu \ge \sigma^{-k+1}$ then $N(\eta,\nu,\sigma) = 2$ and $(\eta,\nu) \in \cP_{k-1}$.
In this case $h$ is affine and forward invariant on $J \cap I_k$ with asymptotically stable fixed point $z_k^*$,
and affine and forward invariant on $J \cap I_{k-1}$ with asymptotically stable fixed point $z_{k-1}^*$.
Thus every forward orbit of $h$ enters one of these intervals and converges to $z_k^*$ or $z_{k-1}^*$.

Finally suppose $\eta < \sigma^{-k}$.
Again $h$ is affine and forward invariant on $J \cap I_k$ and all forward orbits in $J \cap I_k$ converge to $z_k^*$.
If $(\eta,\nu) \in \cP_{k+1}$,
then $N(\eta,\nu,\sigma) = 2$ and
$z_k^*$ and $z_{k+1}^*$ are asymptotically stable and the only attractors of $h$
by the previous argument (with $k+1$ in place of $k$).
If $\nu = \eta + \sigma^{-k} - \sigma^{-k-1}$,
then $z_{k+1}^*$ is neutrally stable and surrounded by period-two solutions.
Again $N(\eta,\nu,\sigma) = 2$ so these solutions fill $J \setminus I_k$, hence $z_k^*$ is the only attractor.
If $(\eta,\nu) \notin \cP_{k+1}$,
then $N(\eta,\nu,\sigma) \ge 2$ and every branch in $J$ besides $h_k$ has slope less than $-1$, so is expanding.
In this case for any open interval $I \subset J$
the sets $I, h(I), h^2(I), \ldots$ cannot all be contained in an interval where $h$ is expanding,
thus there exists $p \ge 0$ such that either $h^p(I) \cap I_k \ne \varnothing$,
or $h^p(I)$ contains a point of discontinuity of $h$ and $h^{p+1}(I) \cap I_k \ne \varnothing$.
Thus there exists $z \in I$ whose forward orbit reaches $I_k$, and hence converges to $z_k^*$.
Since $I$ is an arbitrary open subset of $J$, this shows that $z_k^*$ is the only attractor of $h$.
\end{proof}

Finally observe that at parameter points in $\cP_k$ sufficiently close to the left corner of $\cP_k$,
the map has $N \ge 3$ branches over $J$.
Each branch has a fixed point in $J$, so the stable fixed point $z_k^*$ coexists
with two or more unstable fixed points.
This fixed points may have homoclinic or heteroclinic connections, in which case
$z_k^*$ coexists with a chaotic repeller.

%===============================================================================
\section{Four bifurcation structures}
\label{sec:fourStructures}

In this section we explore the bifurcation structure of
the one-dimensional family $h$, given by \eqref{eq:h},
and compare it to the bifurcation structure of the two-dimensional
family $f$, given by \eqref{eq:f}, near subsumed homoclinic connections.

For $h$, there are four basic bifurcation structures and these
are realised by one-parameter bifurcation diagrams
defined by fixing $\sigma$ and the slope $\frac{\nu}{\eta}$.
Examples are provided by the slices through the bifurcation set Fig.~\ref{fig:2D:num} that correspond to:
(A) period-incrementing, \S\ref{sub:periodInc};
(B) period-adding, \S\ref{sub:periodAdd};
(C) bandcount-adding, \S\ref{sub:bandcountAdd}; and
(D) bandcount-incrementing, \S\ref{sub:bandcountInc}.
In each slice the structure repeats on a smaller and smaller scale as $(\eta,\nu) \to (0,0)$
due to the scaling property, Proposition \ref{pr:scaleInvariance}.
For $f$, the bifurcation set Fig.~\ref{fig:bifSetBCNFZoom} is a magnification of Fig.~\ref{fig:bifSetBCNF}
and indicates four analogous slices.

%%%%%%%%%%%%%%%%%%%%%%%%%%%%%%%%%%%%%%%%%%%%%%%%%%%%%%%%%%%%%%%%%%%%%%%%%%%%%%%%%%%%%%%%%%%%%%%%%%%%
\begin{figure}[b!]
\begin{center}
\includegraphics[height=7cm]{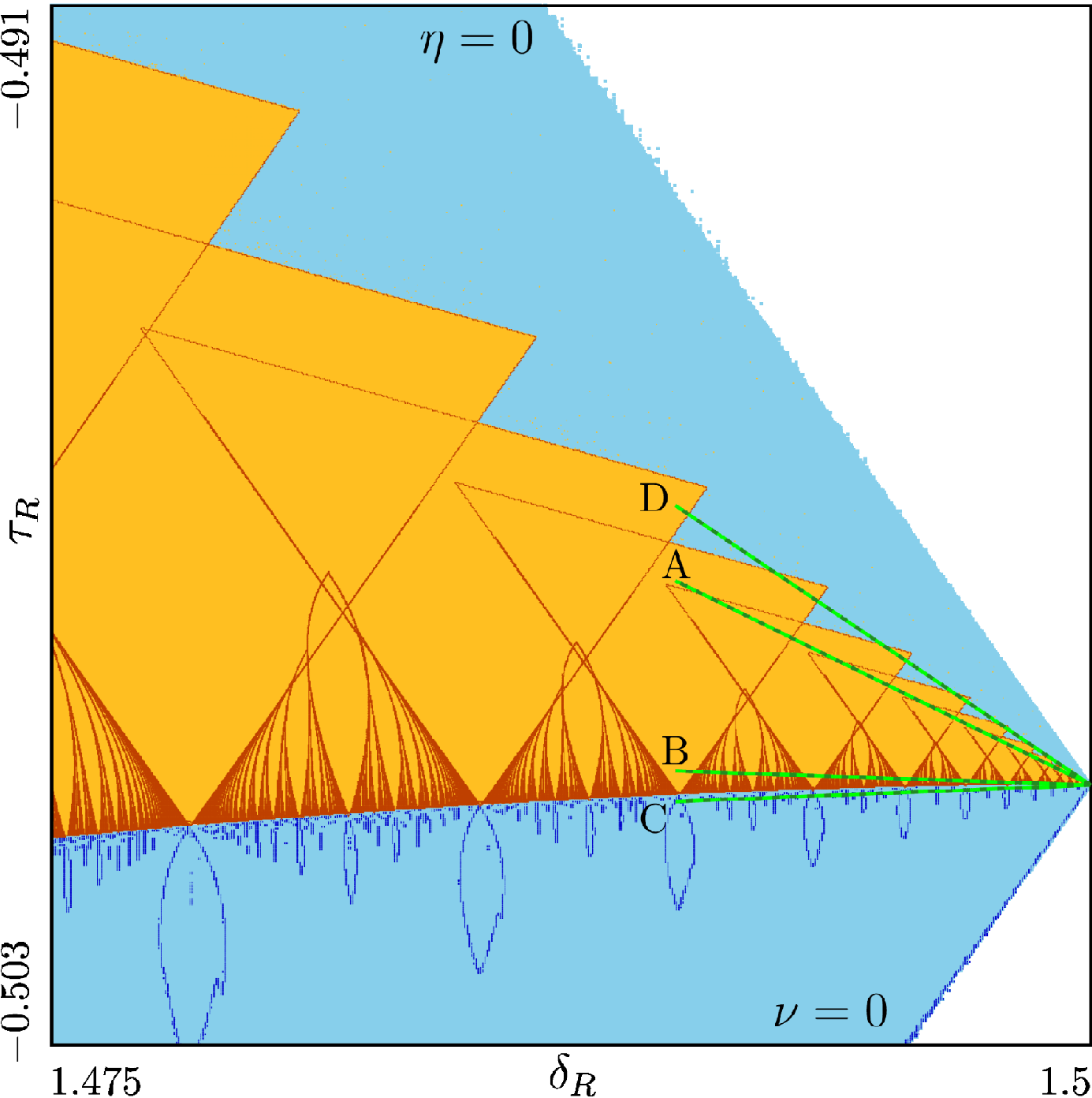}
\caption{
A magnification of Fig.~\ref{fig:bifSetBCNF} showing the bifurcation set of the two-dimensional
border-collision normal form $f$ near the codimension-two point $(\delta_R,\tau_R) = (1.5,-0.5)$.
The lines A--D indicate the one-parameter slices
shown in Figs.~\ref{fig:periodInc}b, \ref{fig:periodAdd}b, \ref{fig:bandcountAdd}b, and \ref{fig:bandcountInc}b.
These correspond approximately to the slices A--D of Fig.~\ref{fig:2D:num} for the one-dimensional family $h$.
\label{fig:bifSetBCNFZoom}
} 
\end{center}
\end{figure}
%%%%%%%%%%%%%%%%%%%%%%%%%%%%%%%%%%%%%%%%%%%%%%%%%%%%%%%%%%%%%%%%%%%%%%%%%%%%%%%%%%%%%%%%%%%%%%%%%%%%

%-------------------------------------------------------------------------------
\subsection{Period-incrementing ($1 < \frac{\nu}{\eta} < \sigma$)}
\label{sub:periodInc}

Here we suppose the slope $\frac{\nu}{\eta}$ belongs to the interval $(1,\sigma)$.
In this case $h$ has either one piece or two pieces over the absorbing interval $J$ by Proposition \ref{pr:N}.
Every point in the slice belongs to a triangle $\cP_k$ \eqref{eq:Pk}
where $h$ has a fixed point $z_k^*$.
Thus the slice shows a sequence of intervals $M_k$ where it intersects $\cP_k$.
For the example shown in Fig.~\ref{fig:periodInc}a we have
$M_k = \left( \xi_k^\ell, \theta_k \right)$,
where $\xi_k^\ell$ is the border-collision bifurcation $\nu = \sigma^{-k}$
corresponding to the bottom edge of $\cP_k$,
and $\theta_k$ is the degenerate flip bifurcation $\nu = \eta + \sigma^{-k+1} - \sigma^{-k}$
corresponding to the diagonal edge of $\cP_k$.
This occurs because $\frac{\nu}{\eta} \in \left( 2 - \frac{1}{\sigma}, \sigma \right)$,
and beyond $\theta_k$ the fixed point persists but is unstable.
If instead $\frac{\nu}{\eta} \in \left( 1, 2 - \frac{1}{\sigma} \right)$
then the right endpoint of each $M_k$ is the border-collision bifurcation $\eta = \sigma^{-k+1}$, denoted $\xi_k^r$,
corresponding to the right edge of $\cP_k$.
Each interval $M_k$ overlaps $M_{k-1}$ and $M_{k+1}$, by Proposition \ref{pr:triangleIntersections}.
The bifurcation diagram shows no other attractors by Proposition \ref{pr:fpGlobalAttractor}.

%%%%%%%%%%%%%%%%%%%%%%%%%%%%%%%%%%%%%%%%%%%%%%%%%%%%%%%%%%%%%%%%%%%%%%%%%%%%%%%%%%%%%%%%%%%%%%%%%%%%
\begin{figure}[b!]
\begin{center}
\setlength{\unitlength}{1cm}
\begin{picture}(17,4.3)
\put(.7,0){\includegraphics[height=4.3cm]{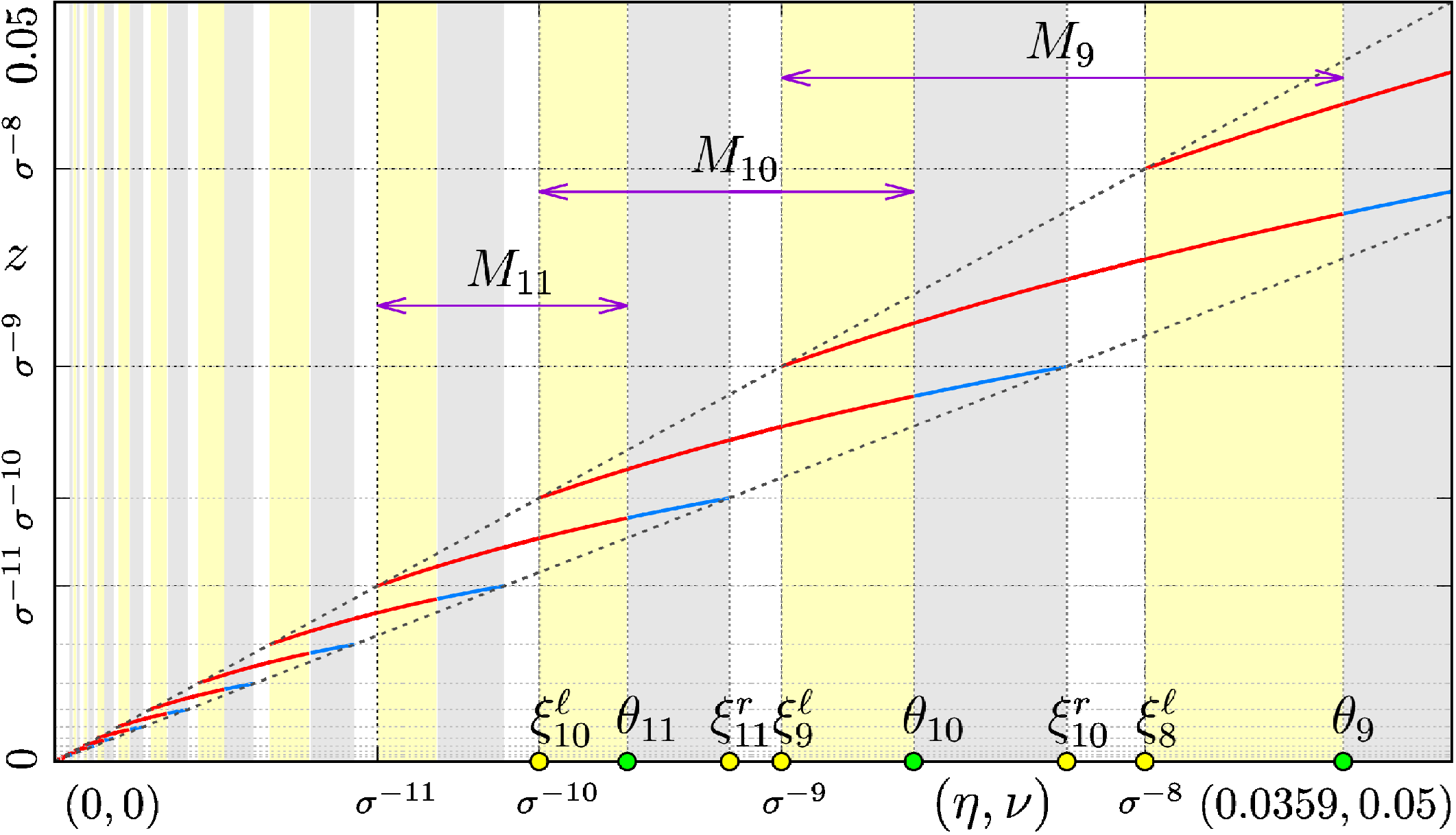}}
\put(9.5,0){\includegraphics[height=4.3cm]{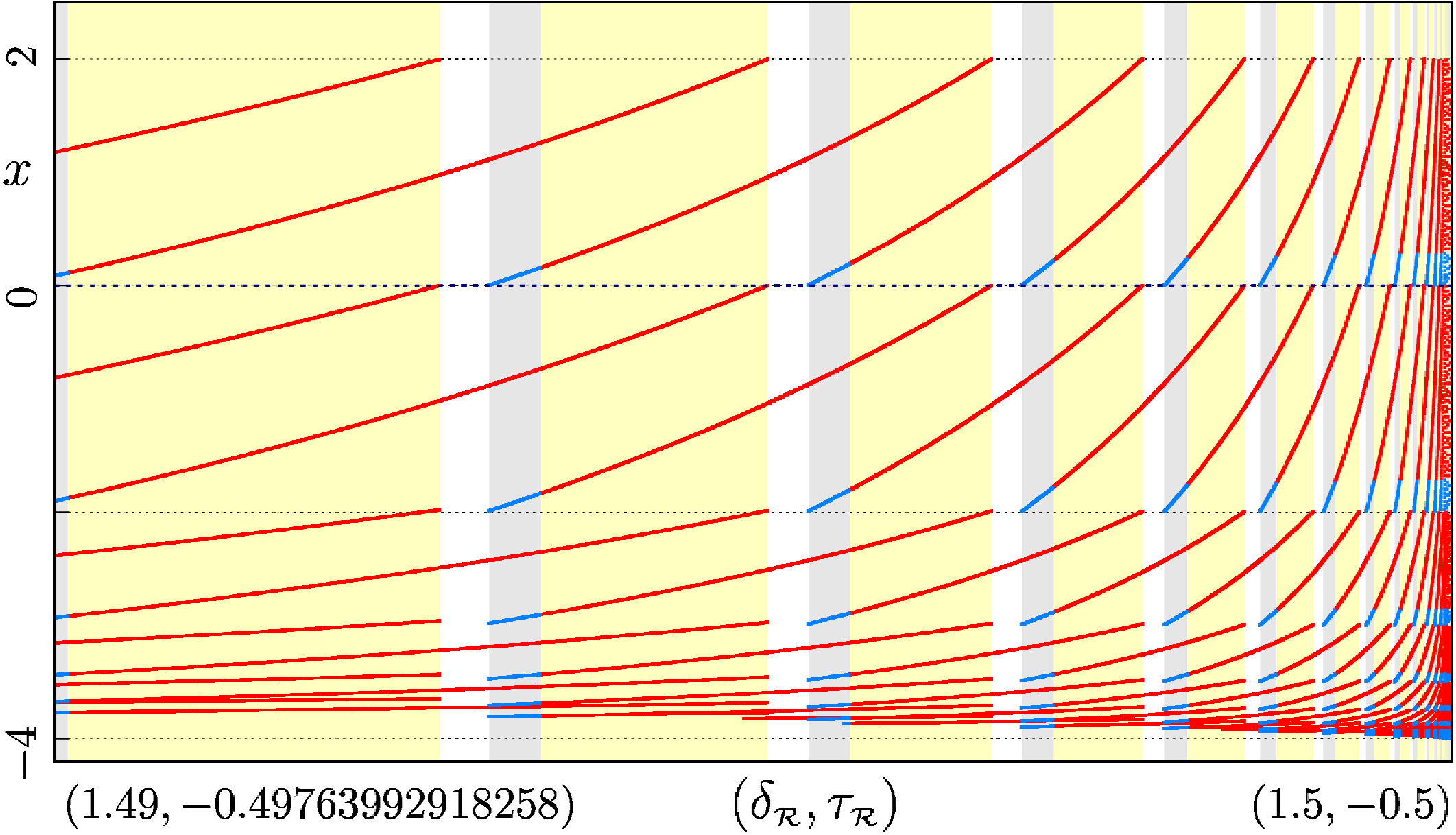}}
\put(0,3.9){{\bf a)}}
\put(8.8,3.9){{\bf b)}}
\end{picture}
\caption{
Bifurcation diagrams of $h$ (panel (a)) and $f$ (panel (b)) corresponding to the
lines A of Figs.~\ref{fig:2D:num} and \ref{fig:bifSetBCNFZoom} respectively.
In (a) the attractors are the fixed points $z_k^*$ (coloured red where they are stable and blue where they are unstable).
In yellow intervals two stable fixed points coexist,
in grey intervals two fixed points coexist but only one is stable,
and in white intervals $h$ has one fixed point (stable).
In (b) the attractors are $L^k R^2$-cycles and the same colour scheme is used.
We have only plotted points for which $y < {\rm max} \left[ -\frac{3 x}{4}, 0 \right]$ so that the diagram is not too cluttered.
\label{fig:periodInc}
} 
\end{center}
\end{figure}
%%%%%%%%%%%%%%%%%%%%%%%%%%%%%%%%%%%%%%%%%%%%%%%%%%%%%%%%%%%%%%%%%%%%%%%%%%%%%%%%%%%%%%%%%%%%%%%%%%%%

Fig.~\ref{fig:periodInc}b shows the corresponding bifurcation diagram for $f$.
Recall, each $z_k^*$ corresponds to a period-$(k+m)$ solution of $f$, and for this example $m=2$.
These solutions exist and are stable on intervals $M_k'$ corresponding to the intervals $M_k$ of panel (a).
Thus as we move left to right across the bifurcation diagram the period of the solution sequentially increases by one.
For this reason this bifurcation structure is referred to as {\em period-incrementing}.

The intervals $M_k'$ are small perturbations of the intervals $M_k$,
thus for sufficiently large values of $k$ each $M_k'$ overlaps $M_{k-1}'$ and $M_{k+1}'$. %\cite{Si20}.
For smaller values of $k$, i.e.~further from the codimension-two point,
other overlapping arrangements are possible.
For example at parameter point (i) in Fig.~\ref{fig:bifSetBCNF}a,
the regions $\cP_9'$, $\cP_{10}'$, and $\cP_{11}'$ overlap.
Fig.~\ref{fig:basins}a provides a phase portrait showing the corresponding
stable $L^k R^2$-cycles (for $k = 7,8,9$)
and their basins of attraction.

%%%%%%%%%%%%%%%%%%%%%%%%%%%%%%%%%%%%%%%%%%%%%%%%%%%%%%%%%%%%%%%%%%%%%%%%%%%%%%%%%%%%%%%%%%%%%%%%%%%%
\begin{figure}[b!]
\begin{center}
\includegraphics[width=17cm]{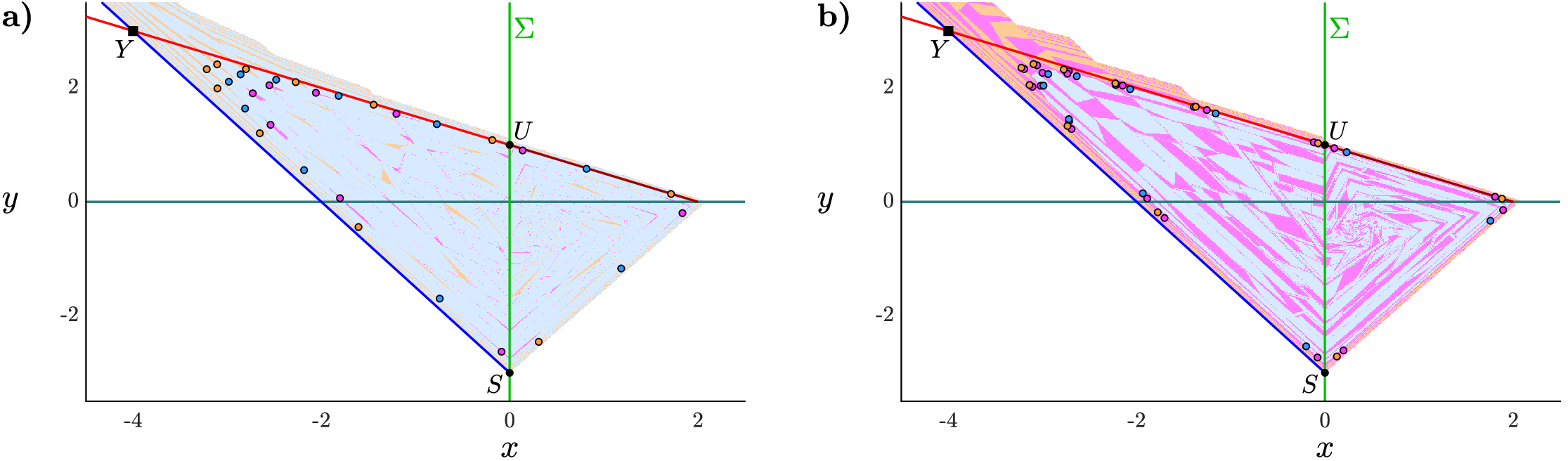}
\caption{
Phase portraits of $f$ with $(\tau_L,\delta_L,\tau_R,\delta_R) = (2,0.75,-0.484,1.433)$ in panel (a)
and $(\tau_L,\delta_L,\tau_R,\delta_R) = (2,0.75,-0.494,1.443)$ in panel (b).
These parameter points are labelled (i) and (ii) respectively in Fig.~\ref{fig:bifSetBCNF}.
In (a) $f$ has a stable $L^7 R^2$-cycle (pink),
a stable $L^8 R^2$-cycle (blue),
and a stable $L^9 R^2$-cycle (orange).
In (b) $f$ has a stable $L^8 R^2$-cycle (blue),
a stable $L^9 R^2$-cycle (orange),
and a stable $L^8 R^2 L^9 R^2$-cycle (pink).
Both panels include numerically computed basins of attraction.
\label{fig:basins}
} 
\end{center}
\end{figure}
%%%%%%%%%%%%%%%%%%%%%%%%%%%%%%%%%%%%%%%%%%%%%%%%%%%%%%%%%%%%%%%%%%%%%%%%%%%%%%%%%%%%%%%%%%%%%%%%%%%%

Proposition \ref{pr:fpGlobalAttractor}, which concerns the absence of other attractors,
does not easily extend from $h$ to $f$.
For example the parameter point labelled (ii) in Fig.~\ref{fig:bifSetBCNF}a
belongs to $\cP_{10}'$ and $\cP_{11}'$,
but in addition to the corresponding stable $L^8 R^2$ and $L^9 R^2$-cycles,
there also exists a stable $L^8 R^2 L^9 R^2$-cycle, see Fig.~\ref{fig:basins}b.

The period-incrementing structure
is common for families of one-dimensional piecewise-smooth maps with two pieces,
one piece with positive slope and the other piece with negative slope \cite{GrAl17}.
Here the incrementing structure arises
in a scenario where all pieces of the map have negative slope \cite[\S 8.3.2]{AvGa19}.

%-------------------------------------------------------------------------------
\subsection{Period-adding ($\frac{1}{\sigma} < \frac{\nu}{\eta} < 1$)}
\label{sub:periodAdd}

Now suppose the slope $\frac{\nu}{\eta}$ belongs to the interval $\left( \frac{1}{\sigma}, 1 \right)$.
Again the number of branches $N$ that $h$ has over $J$ is either $1$ or $2$ by Proposition \ref{pr:N}.
Intervals where $N = 1$ correspond to intersections with exactly one $\cP_k$,
where the attractor is unique and equal to the fixed point $z_k^*$.
Intervals where $N = 2$ intersect none of the $\cP_k$, and $h$ appears as in Fig.~\ref{fig:circleMaps}a.
The two pieces of $h$ are $h_k$ and $h_{k+1}$, where $k$ is such that $\sigma^{-k} \in (\nu,\eta)$.
These pieces have positive slope, and it is important to consider the value
\begin{equation}
\Delta = h_{k}(\eta) - h_{k+1}(\nu),
\label{eq:Delta}
\end{equation}
which is the difference between the value of $h$ at the left and right endpoints of $J$.
By \eqref{eq:hk},
\begin{equation}
\Delta = \frac{(\eta - \nu)(\eta - \sigma \nu)}{\sigma - 1},
\label{eq:Delta2}
\end{equation}
so with $\frac{\nu}{\eta} \in \left( \frac{1}{\sigma}, 1 \right)$ we have $\Delta < 0$.
In this case $h|_J$ is a one-to-one function, as in Fig.~\ref{fig:circleMaps}a.

%%%%%%%%%%%%%%%%%%%%%%%%%%%%%%%%%%%%%%%%%%%%%%%%%%%%%%%%%%%%%%%%%%%%%%%%%%%%%%%%%%%%%%%%%%%%%%%%%%%%
\begin{figure}[b!]
\begin{center}
\includegraphics[width=11cm]{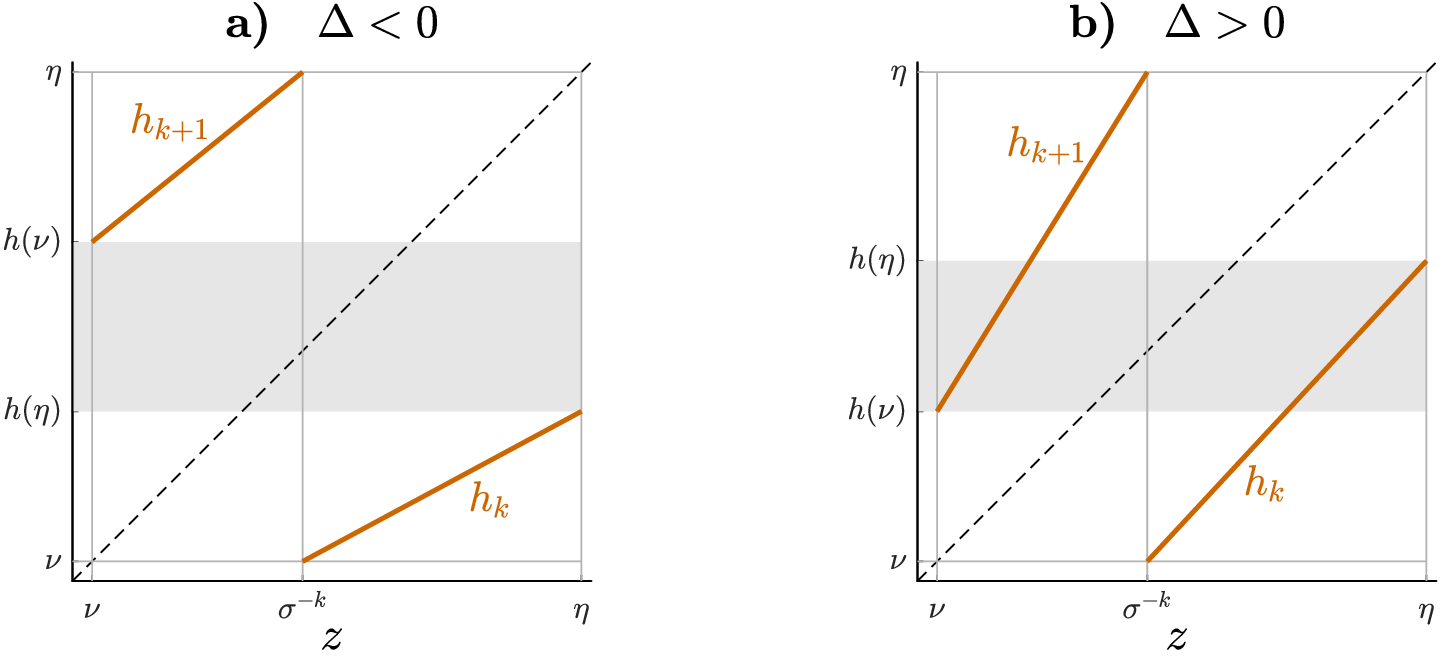}
\caption{
Sketches of the map $h$ when $\eta > \nu$ and $N = 2$.
In (a) the quantity $\Delta = h(\eta) - h(\nu)$ is negative and $h|_J$ is one-to-one but not onto,
while in (b) $\Delta$ is positive and $h|_J$ is onto but not one-to-one.
In the context of circle maps,
$h$ is {\em non-overlapping} if $\Delta < 0$
and {\em overlapping} if $\Delta > 0$ \cite{Ke80}.
\label{fig:circleMaps}
} 
\end{center}
\end{figure}
%%%%%%%%%%%%%%%%%%%%%%%%%%%%%%%%%%%%%%%%%%%%%%%%%%%%%%%%%%%%%%%%%%%%%%%%%%%%%%%%%%%%%%%%%%%%%%%%%%%%

With $\Delta < 0$, the dynamics of $h|_J$ are non-chaotic
(in general, either periodic or quasiperiodic), while if $\Delta > 0$ its dynamics are chaotic.
This is because for two-piece piecewise-linear maps the invertibility condition $\Delta = 0$
is precisely the boundary between non-chaotic and chaotic dynamics, 
see \cite{BrSt91}, \cite[pg.~186]{DiBu08}, or \cite[pg.~393]{AvGa19}.
We stress that this is specific to the piecewise-linear setting;
for typical smooth or piecewise-smooth families of maps the boundary of chaos is far more complex \cite{MaTr86}.

With $\Delta < 0$, the map $h$ has stable periodic solutions
and quasiperiodic Cantor set attractors that form the structure shown in Fig.~\ref{fig:periodAdd}a.
With $h|_J$ treated as a circle map, it has a unique {\em rotation number}, $\rho$ \cite{GrAl17,RhTh86}.
If $\rho$ is rational, specifically $\rho = \frac{u}{q} \in (0,1)$ where $\frac{u}{q}$ is irreducible,
then $h$ has a stable period-$q$ solution with $u$ points in $I_k$.
If $\rho$ is irrational, then
$h$ has a Cantor set attractor given by the closure of quasiperiodic orbits.
Importantly, $\rho$ varies continuously with parameters \cite[Thm.~5.8]{RhTh91},
so between intervals where $\rho = \frac{u_1}{q_1}$ and $\rho = \frac{u_2}{q_2}$,
if $\frac{u_1}{q_1}$ and $\frac{u_2}{q_2}$ are {\em Farey neighbours}
then there exists an interval where $\rho = \frac{u_1 + u_2}{q_1 + q_2}$ (the {\em Farey sum}).
In this way the periods `add'
and the bifurcation structure is referred to as period-adding, see \cite{AvGa19,GrAl17} for details.

%%%%%%%%%%%%%%%%%%%%%%%%%%%%%%%%%%%%%%%%%%%%%%%%%%%%%%%%%%%%%%%%%%%%%%%%%%%%%%%%%%%%%%%%%%%%%%%%%%%%
\begin{figure}[b!]
\begin{center}
\setlength{\unitlength}{1cm}
\begin{picture}(17,4.3)
\put(.7,0){\includegraphics[height=4.3cm]{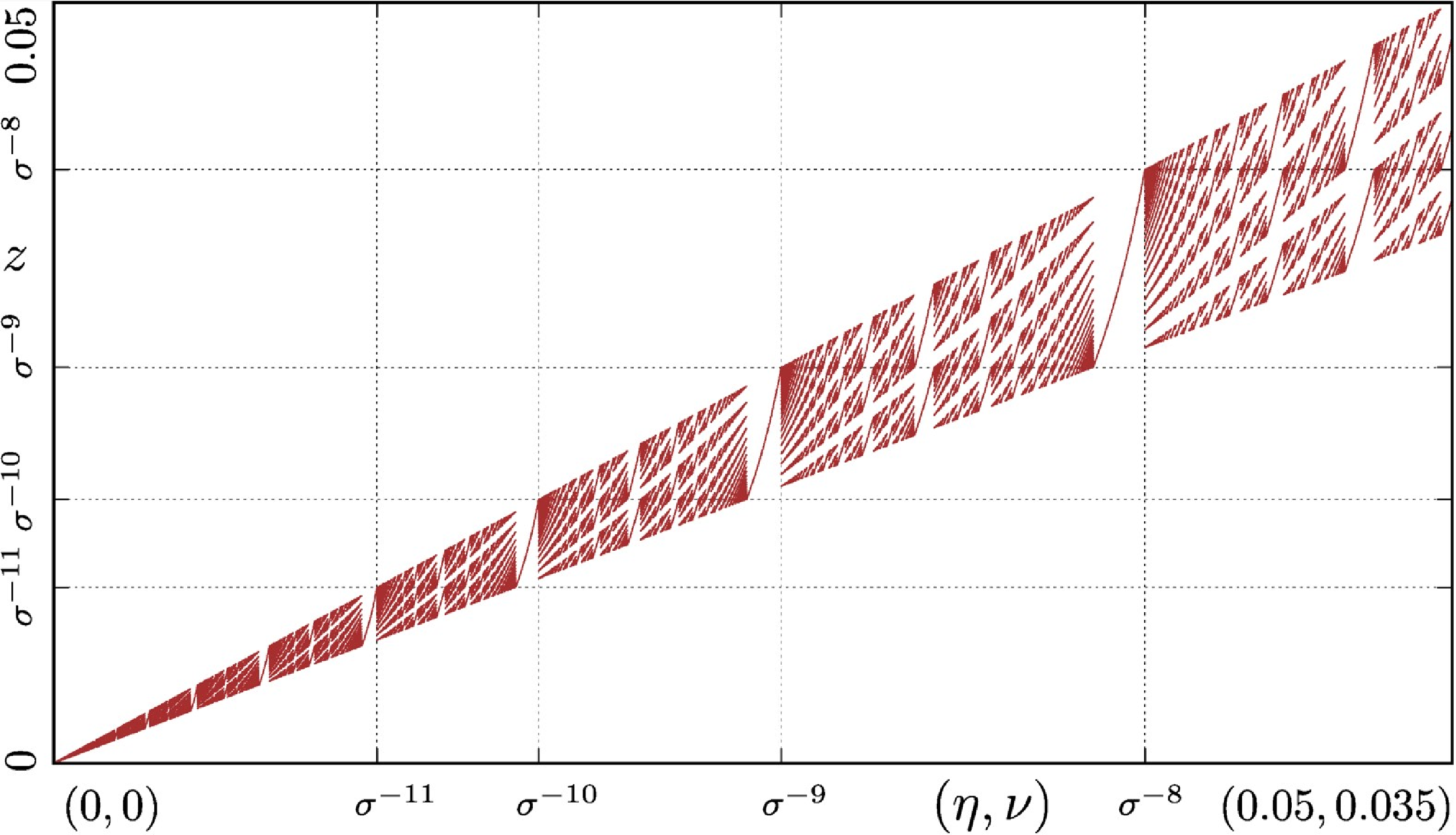}}
\put(9.5,0){\includegraphics[height=4.3cm]{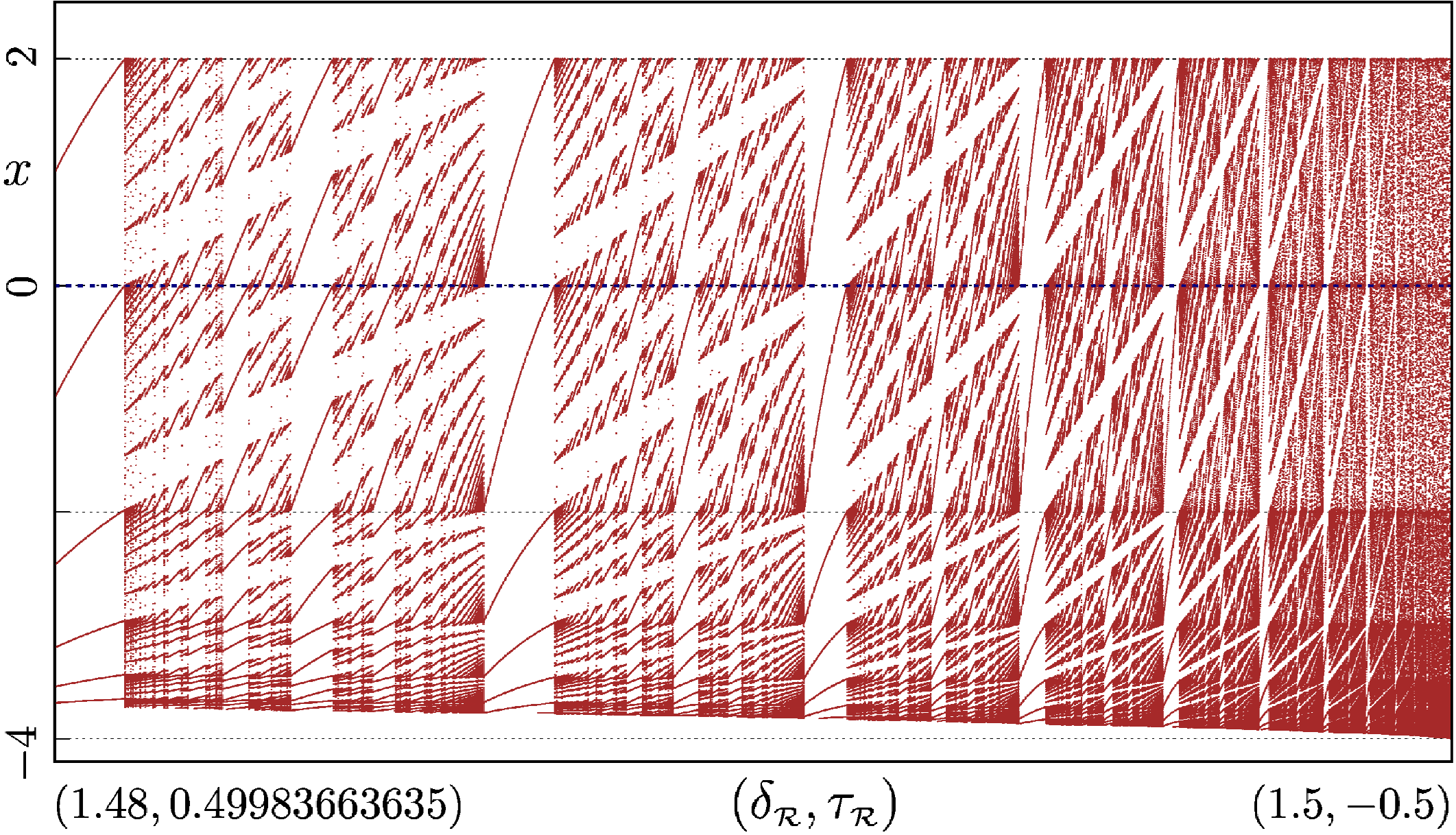}}
\put(0,3.9){{\bf a)}}
\put(8.8,3.9){{\bf b)}}
\end{picture}
\caption{
Bifurcation diagrams of $h$ (panel (a)) and $f$ (panel (b)) corresponding to the
lines B of Figs.~\ref{fig:2D:num} and \ref{fig:bifSetBCNFZoom} respectively.
For clarity panel (b) only shows points with $y < {\rm max} \left[ -\frac{3 x}{4}, 0 \right]$.
\label{fig:periodAdd}
} 
\end{center}
\end{figure}
%%%%%%%%%%%%%%%%%%%%%%%%%%%%%%%%%%%%%%%%%%%%%%%%%%%%%%%%%%%%%%%%%%%%%%%%%%%%%%%%%%%%%%%%%%%%%%%%%%%%

The rotation number $\rho$ varies monotonically from
the limit $\rho = 0$ at $\eta = \sigma^{-k}$ to the limit $\rho = 1$ at $\nu = \sigma^{-k}$.
Thus over every interval where $N = 2$ there is a {\em full} period-adding structure.

Fig.~\ref{fig:Y} shows a magnification of Fig.~\ref{fig:2D:num} focussing on one box where $N = 2$ with $\eta > \nu$.
Rational values of $\rho$ occur in `tongues' that issue from the codimension-two point
$(\eta,\nu) = \left( \sigma^{-k}, \sigma^{-k} \right)$,
described more generally in \cite[\S 8.1.1]{AvGa19}.
The boundaries of the tongues are border-collision bifurcations
where one point of the periodic solution reaches the discontinuity $z = \sigma^{-k}$.

%%%%%%%%%%%%%%%%%%%%%%%%%%%%%%%%%%%%%%%%%%%%%%%%%%%%%%%%%%%%%%%%%%%%%%%%%%%%%%%%%%%%%%%%%%%%%%%%%%%%
\begin{figure}[b!]
\begin{center}
\includegraphics[width=8cm]{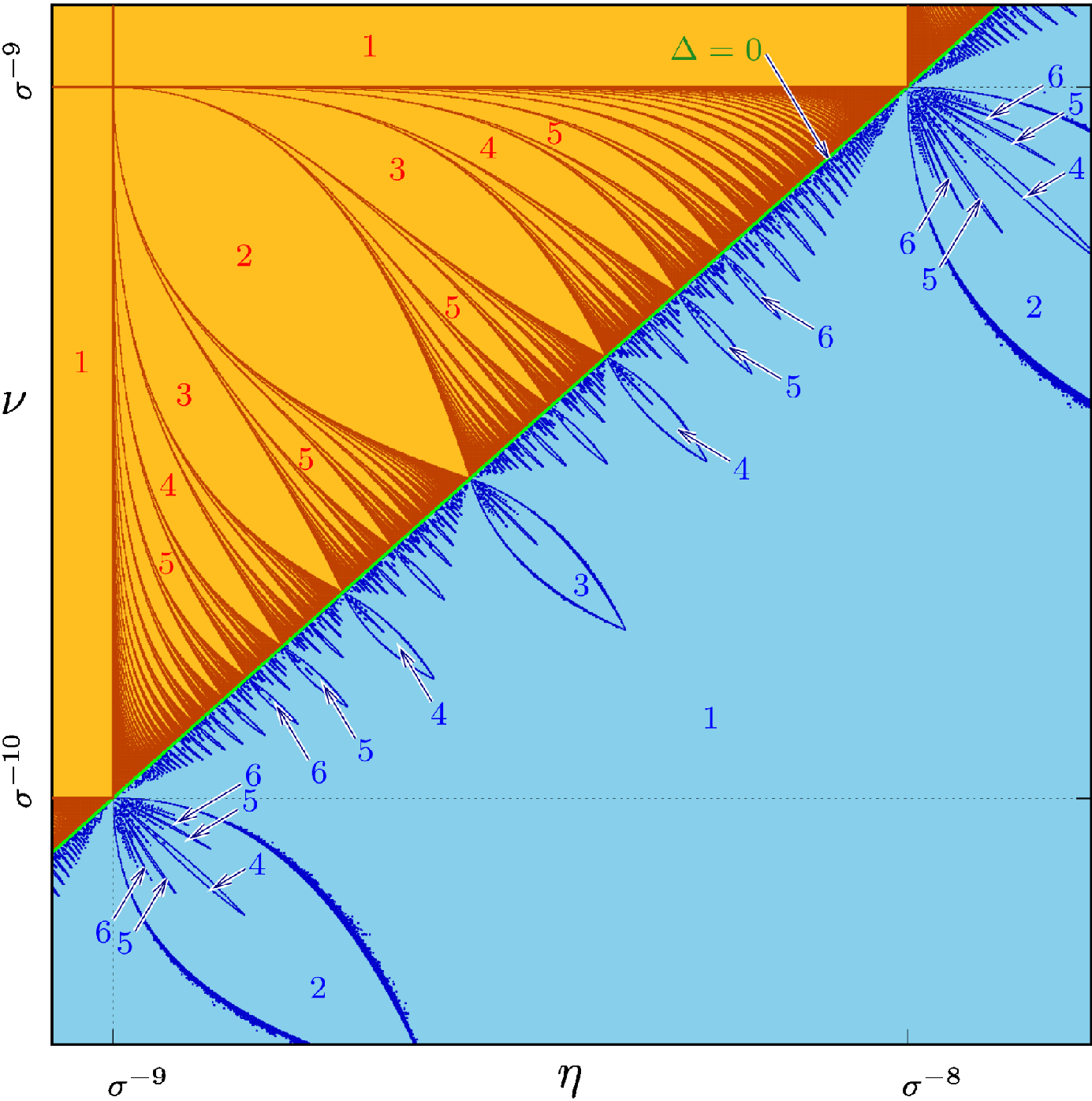}
\caption{
A magnification of the bifurcation set Fig.~\ref{fig:2D:num} of $h$ with $\sigma = 1.5$.
Throughout the rectangle $\sigma^{-9} < \eta < \sigma^{-8}$, $\sigma^{-10} < \nu < \sigma^{-9}$
the map $h$ has $N = 2$ pieces over $J$
and each piece is increasing, as in Fig.~\ref{fig:circleMaps}.
The yellow regions are numbered by the period of the corresponding stable periodic solution,
while the blue regions are numbered by the number of connected components of the chaotic attractor.
\label{fig:Y}
} 
\end{center}
\end{figure}
%%%%%%%%%%%%%%%%%%%%%%%%%%%%%%%%%%%%%%%%%%%%%%%%%%%%%%%%%%%%%%%%%%%%%%%%%%%%%%%%%%%%%%%%%%%%%%%%%%%%

Now see Fig.~\ref{fig:periodAdd}b for the corresponding bifurcation structure for $f$.
Where $\rho = \frac{u}{q} \in \mathbb{Q}$ is irreducible,
the map $h$ has a stable periodic solution with $u$ points in $I_k$ and $q-u$ points in $I_{k+1}$,
thus the corresponding periodic solution of $f$ has period
\begin{align}
p &= u (k+m) + (q-u) (k+m+1), \nonumber \\
&= q (k+m+1) - u, \nonumber
\end{align}
where $m = 2$ for this example.

In the bifurcation set for $f$ (Fig.~\ref{fig:bifSetBCNFZoom}),
the tongues have the same appearance as in the bifurcation set of $h$ (Fig.~\ref{fig:2D:num}),
except they overlap slightly giving rise to coexisting attractors.
This may be due to the presence of two nested invariant circles in the phase space of $f$, see \cite{AvZh19},
but more analysis is needed.
In our example with $\delta_L = 0$, see Fig.~\ref{fig:bifSetBCNFexB}a,
the tongues instead develop shrinking points near their intersections with $\cP_k'$.
For both the coexisting attractors and the shrinking points,
we believe these occur arbitrarily close to the codimension-two points, $(\eta,\nu) = (0,0)$.

%-------------------------------------------------------------------------------
\subsection{Bandcount-adding ($\frac{\nu}{\eta} < \frac{1}{\sigma}$)}
\label{sub:bandcountAdd}

Next we consider $\frac{\nu}{\eta} < \frac{1}{\sigma}$.
With also $\frac{\nu}{\eta} > \frac{1}{\sigma^2}$, as in Fig.~\ref{fig:bandcountAdd}a,
the bifurcation diagram has intervals over which $N = 2$.
Here again $h|_J$ can be treated as a circle map,
but by \eqref{eq:Delta2} we now have $\Delta > 0$, so $h|_J$ is not a one-to-one function, see Fig.~\ref{fig:circleMaps}b.
In this case the orbits of $h$ in $J$ do not all share the same rotation number,
instead there is a {\em rotation interval} \cite{Ke80,Mi86b},
and the map has a chaotic attractor.

This attractor is an interval or a union of disjoint intervals.
As parameters are varied, the number of intervals changes,
producing the so-called {\em bandcount-adding} structure of Fig.~\ref{fig:bandcountAdd}a \cite{AvSc08}.
In the bifurcation set shown in Fig.~\ref{fig:Y},
the number of intervals is greater than one in the lobes
that issue from the line $\frac{\nu}{\eta} = \frac{1}{\sigma}$.
For any $\rho = \frac{u}{q} \in (0,1)$, the corresponding periodicity tongue (yellow) gives rise at
$\frac{\nu}{\eta} = \frac{1}{\sigma}$ to a lobe (blue) where $h$ has an attractor consisting of $q+1$ or more disjoint intervals.
The boundaries of the lobe are where this attractor collides with an unstable period-$q$ solution
that has a positive stability multiplier.
This is a type of homoclinic bifurcation known as an {\em interior crisis} \cite{GrOt83},
and may more specifically be called an {\em expansion bifurcation} because the size of the attractor increases suddenly.
Within each lobe there is also a nested substructure of smaller lobes where the attractor
is comprised of yet larger numbers of disjoint intervals \cite[\S 6.4]{AvGa19}.

%%%%%%%%%%%%%%%%%%%%%%%%%%%%%%%%%%%%%%%%%%%%%%%%%%%%%%%%%%%%%%%%%%%%%%%%%%%%%%%%%%%%%%%%%%%%%%%%%%%%
\begin{figure}[b!]
\begin{center}
\setlength{\unitlength}{1cm}
\begin{picture}(17,4.3)
\put(.7,0){\includegraphics[height=4.3cm]{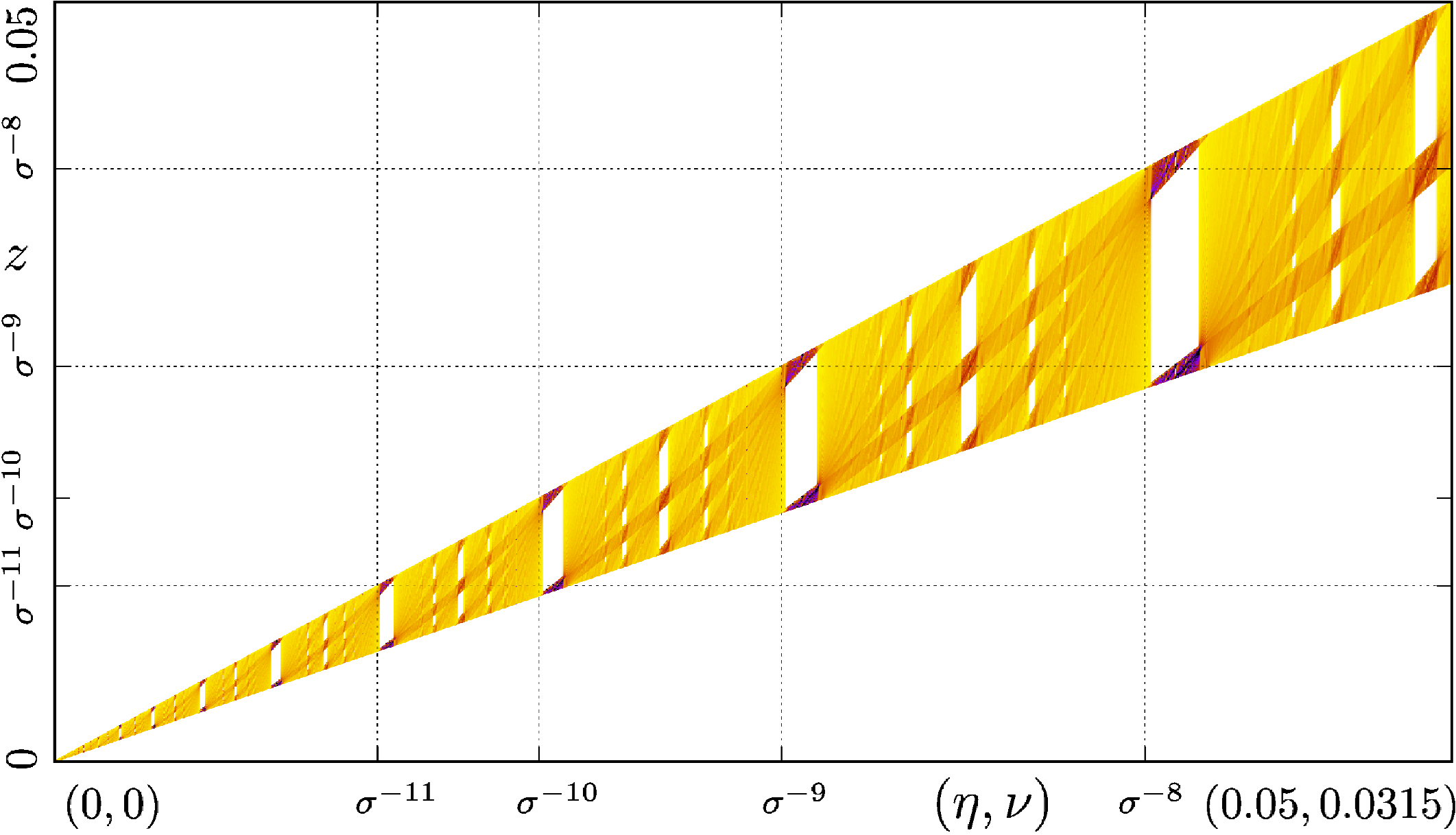}}
\put(9.5,0){\includegraphics[height=4.3cm]{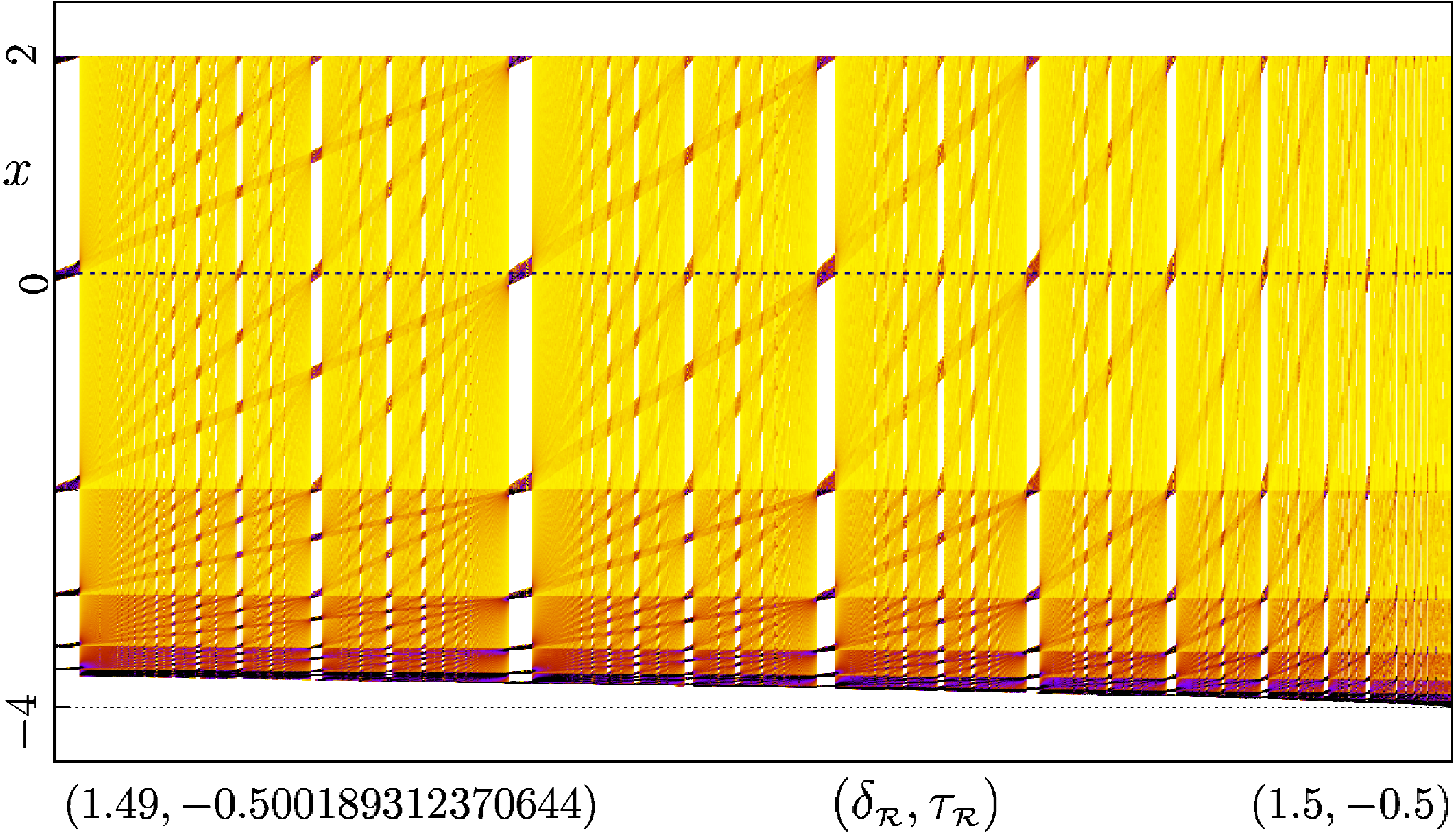}}
\put(0,3.9){{\bf a)}}
\put(8.8,3.9){{\bf b)}}
\end{picture}
\caption{
Bifurcation diagrams of $h$ (panel (a)) and $f$ (panel (b)) corresponding to the
lines C of Figs.~\ref{fig:2D:num} and \ref{fig:bifSetBCNFZoom} respectively.
Points are coloured according to density of the attractor.
For clarity panel (b) only shows points with $y < {\rm max} \left[ -\frac{3 x}{4}, 0 \right]$.
\label{fig:bandcountAdd}
} 
\end{center}
\end{figure}
%%%%%%%%%%%%%%%%%%%%%%%%%%%%%%%%%%%%%%%%%%%%%%%%%%%%%%%%%%%%%%%%%%%%%%%%%%%%%%%%%%%%%%%%%%%%%%%%%%%%

With $\frac{\nu}{\eta} < \frac{1}{\sigma}$ the bifurcation diagram
includes parameter combinations for which the number of branches $N$ of the map over $J$ is greater than two.
To our knowledge the bifurcation structures of families of such maps has not been explored before,
but if each piece of $h_k$ has slope greater than $1$, then
one can apply the classical theory of piecewise-expanding maps \cite{LaYo73,LiYo78}
to obtain some information on the nature of the attractors.

Numerically we observe that from the lower-right corner of each $\cP_k$
there issues a lobe into the adjoining rectangle where $N = 3$
throughout which $h$ has an attractor comprised of two or more disjoint intervals, e.g.~Fig.~\ref{fig:bandexA}c.
The boundaries of these lobes are expansion bifurcations
where the attractor collides with the fixed point $z_k^*$, which here is unstable.
Each lobe has a nested substructure of smaller lobes.
The largest nested region seems to be confined by expansion bifurcation curves
of a period-two solution of $h$ that has points in $I_{k-1}$ and $I_{k+1}$.
It remains for future work to characterise the expansion bifurcations
that form the remainder of the substructure.

%%%%%%%%%%%%%%%%%%%%%%%%%%%%%%%%%%%%%%%%%%%%%%%%%%%%%%%%%%%%%%%%%%%%%%%%%%%%%%%%%%%%%%%%%%%%%%%%%%%%
\begin{figure}[b!]
\begin{center}
\includegraphics[width=17cm]{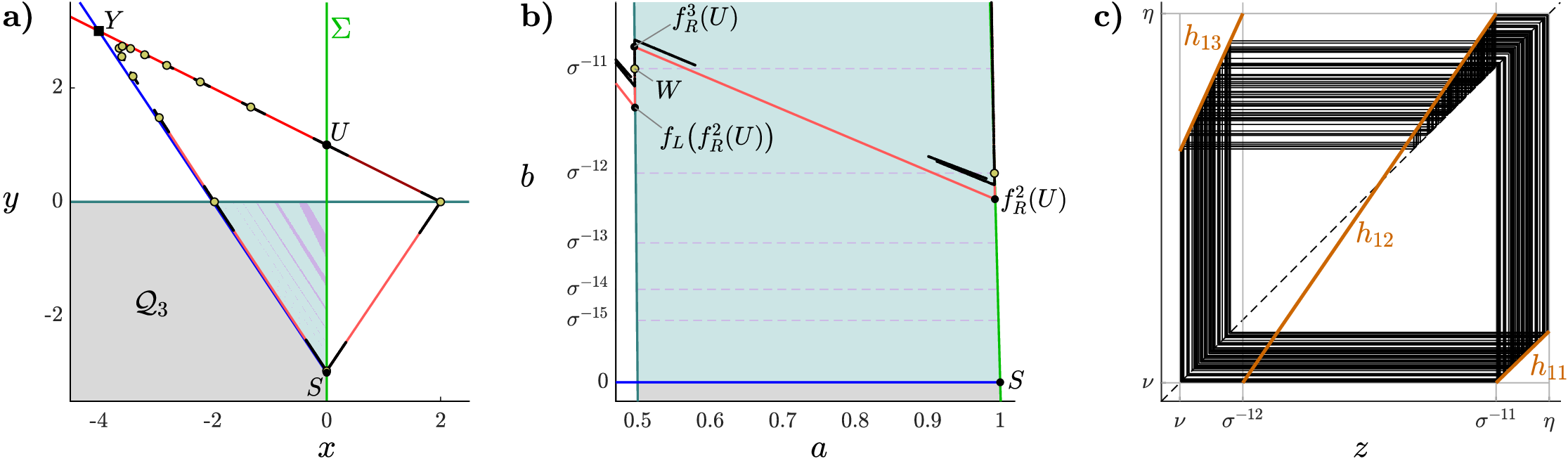}
\caption{
Panel (a) is a phase portrait of $f$ with $(\tau_L,\delta_L,\tau_R,\delta_R) = (2,0.75,-0.501,1.485)$
corresponding to the parameter point (iii) of Fig.~\ref{fig:bifSetBCNF}a.
The yellow circles indicate the points of a saddle $L^{13} R$-cycle,
the attractor is indicated in black,
and the third quadrant $\cQ_3$ is shaded by the value of $r$ as in Fig.~\ref{fig:chaoticAttr}.
Panel (b) shows part of the same plot in $(a,b)$-coordinates \eqref{eq:coordChange}
with dashed lines at $b = \sigma^{-k}$ for $k = 11,\ldots,15$
because the lavender regions are too narrow to see.
Panel (c) is a cobweb diagram of the corresponding map $h$
which uses $\sigma = 1.5$ and $\eta = 0.01236825$ and $\nu = 0.00675$
given by the formulas \eqref{eq:eta2} and \eqref{eq:nu2}.
\label{fig:bandexA}
} 
\end{center}
\end{figure}
%%%%%%%%%%%%%%%%%%%%%%%%%%%%%%%%%%%%%%%%%%%%%%%%%%%%%%%%%%%%%%%%%%%%%%%%%%%%%%%%%%%%%%%%%%%%%%%%%%%%

Fig.~\ref{fig:bandcountAdd}b shows a corresponding bifurcation diagram of $f$.
We observe no significant differences in the arrangement of the bifurcations between this diagram and that of $h$.
However, for $f$ the bifurcations that form the boundaries of the lobes can in some places
be border-collision bifurcations instead of expansion bifurcations.
In this case the lobes have a more complicated geometry, as seen in the lower-left part of Fig.~\ref{fig:bifSetBCNFexB}a.
If $h$ has a multi-band attractor, then $f$ often has an attractor with multiple connected components,
as explained below, but this is not always the case (the attractor of $f$ may have positive but low density
in places where the attractor of $h$ has gaps).
The left-most and right-most intervals that comprise the attractor of $h$ correspond to the same connected component
of the attractor of $f$ (see Fig.~\ref{fig:bandexA}), whereas any other intervals correspond to distinct connected components.

If the attractor of $h$ is the single interval $J$,
then the corresponding attractor of the first return map $F$ 
stretches from the switching line $\Sigma$ near $S$
to its image $f(\Sigma)$ near $f(S)$.
By iterating the attractor of $F$ under $f$,
we obtain the attractor of $f$, which in this case is typically a connected set.

Now suppose the attractor of $h$ is a disjoint union of two intervals,
as in Fig.~\ref{fig:bandexA}c which corresponds to a parameter point in the lobe issuing from $\cP_{12}$.
The corresponding attractor of $F$ has two connected components in $\cQ_3$, see Fig.~\ref{fig:bandexA}b.
For this example, the corresponding attractor of $f$, see Fig.~\ref{fig:bandexA}a, appears to have $14$ connected components
(matching the period $k+m = 12+2 = 14$ of the periodic solution ($L^{12} R^2$-cycle)
of $f$ that corresponds to the fixed point $z_k^*$ of $h$).
Since the bifurcation set Fig.~\ref{fig:bifSetBCNFZoom} for $f$ has lobes
mimicking those in Fig.~\ref{fig:2D:num} for $h$,
we believe this behaviour is typical, and provide the following heuristic explanation.

The forward orbits of most points $P \in \cQ_3$ with $0 < b(P) \ll 1$
experience $r(P) = m$ iterations in the right half-plane before returning to $\cQ_3$.
Only if $b(P) \approx \sigma^{-k}$ for some $k$ does the value of $r(P)$ differ from $m$.
Indeed in Fig.~\ref{fig:bandexA} our numerical scan of the value of $r(P)$
detected $r(P) \ne m$ in the lavender regions shown in panel (a),
but in panel (b) these regions are too narrow to be seen,
so we have instead plotted dashed lines at $b = \sigma^{-k}$ for several values of $k$ to indicate the location of these regions.
For a path of points in $\cQ_3$ from some $b = \sigma^{-k}$
vertically downwards to $b = \sigma_{-k-1}$,
their images under $F$ stretch roughly from $\Sigma$ to $f(\Sigma)$.

The attractor of $f$, indicated with black dots in Fig.~\ref{fig:bandexA}a,
appears to be the closure of the unstable set of a $L^{13} R$-cycle, indicated with yellow circles.
To explain why the attractor is not a connected set,
consider the result of growing the unstable set outwards from the point $W$
of the $L^{13} R$-cycle, see Fig.~\ref{fig:bandexA}b.
This set is linear until developing kinks near $f_R^3(U)$ and $f_L \left( f_R^2(U) \right)$.
The $b$-values of $f_R^3(U)$ and $f_L \left( f_R^3(U) \right)$ are $\eta$ and $\sigma \nu$,
where $\frac{1}{\sigma^2} < \frac{\nu}{\eta} < \frac{1}{\sigma}$,
and thus the initial part of the unstable set is relatively short.
On a log-scale it is shorter than the distance from one dashed line $b = \sigma^{-k}$ to the next,
thus the image of this part of the unstable set does not stretch from $\Sigma$ to $f(\Sigma)$.
Subsequent images also do not stretch from $\Sigma$ to $f(\Sigma)$,
thus each of the 14 components of the unstable set are bounded away from one another.

%-------------------------------------------------------------------------------
\subsection{Bandcount-incrementing ($\frac{\nu}{\eta} > \sigma$)}
\label{sub:bandcountInc}

Finally suppose $\frac{\nu}{\eta} > \sigma$.
If also $\frac{\nu}{\eta} < \sigma^2$, as in Fig.~\ref{fig:bandcountInc}a,
the bifurcation diagram has intervals over which $N = 2$.
In the context of two-piece maps, the bifurcation structure is termed {\em bandcount-incrementing} \cite{AvEc08,AvEc08b,AvEc09}.
There are many possibilities for the precise nature of this structure,
as can be inferred from the bifurcation set shown in Fig.~\ref{fig:2D:num},
a magnification of which is provided by Fig.~\ref{fig:U}.

%%%%%%%%%%%%%%%%%%%%%%%%%%%%%%%%%%%%%%%%%%%%%%%%%%%%%%%%%%%%%%%%%%%%%%%%%%%%%%%%%%%%%%%%%%%%%%%%%%%%
\begin{figure}[b!]
\begin{center}
\setlength{\unitlength}{1cm}
\begin{picture}(17,4.3)
\put(.7,0){\includegraphics[height=4.3cm]{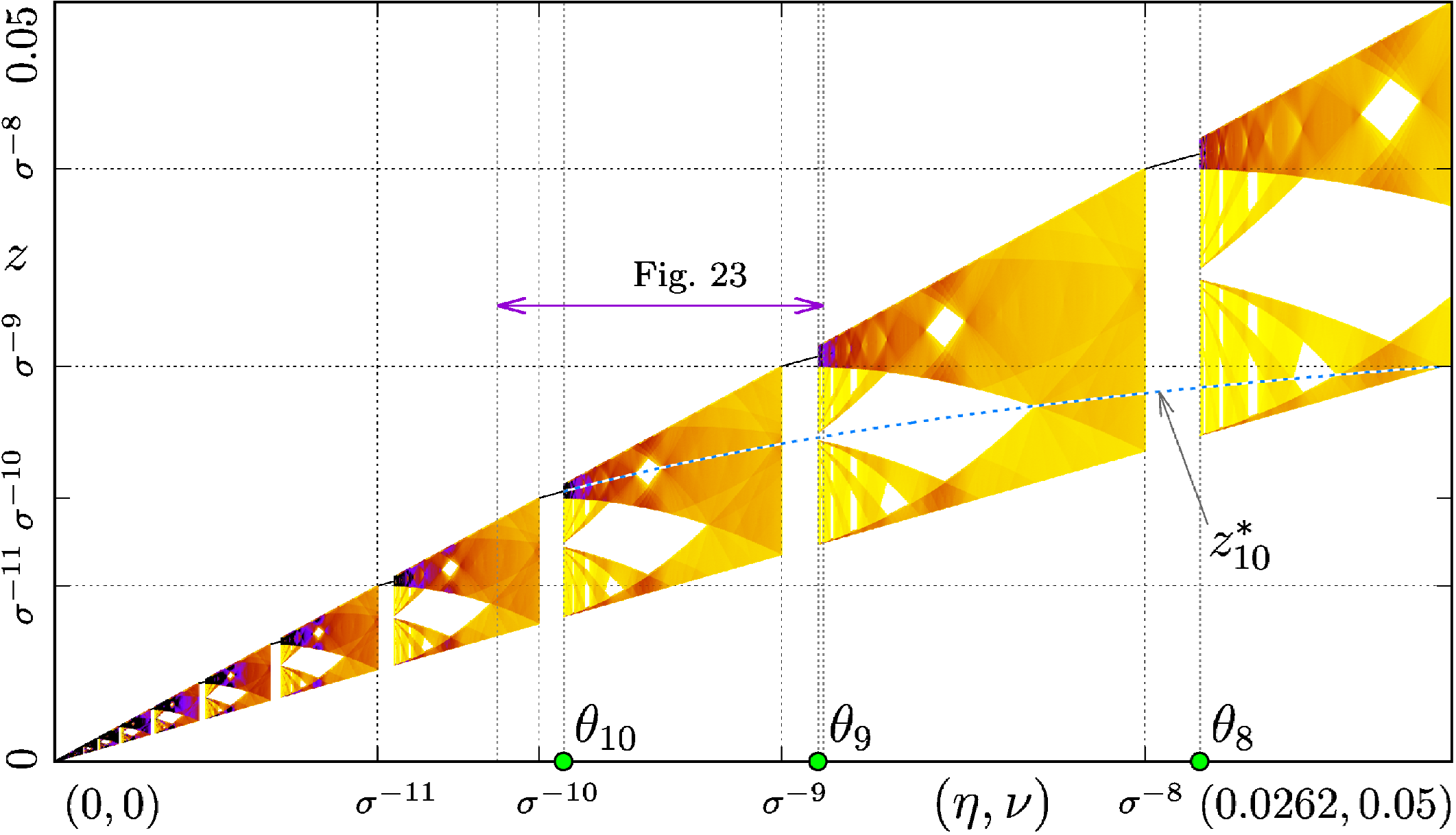}}
\put(9.5,0){\includegraphics[height=4.3cm]{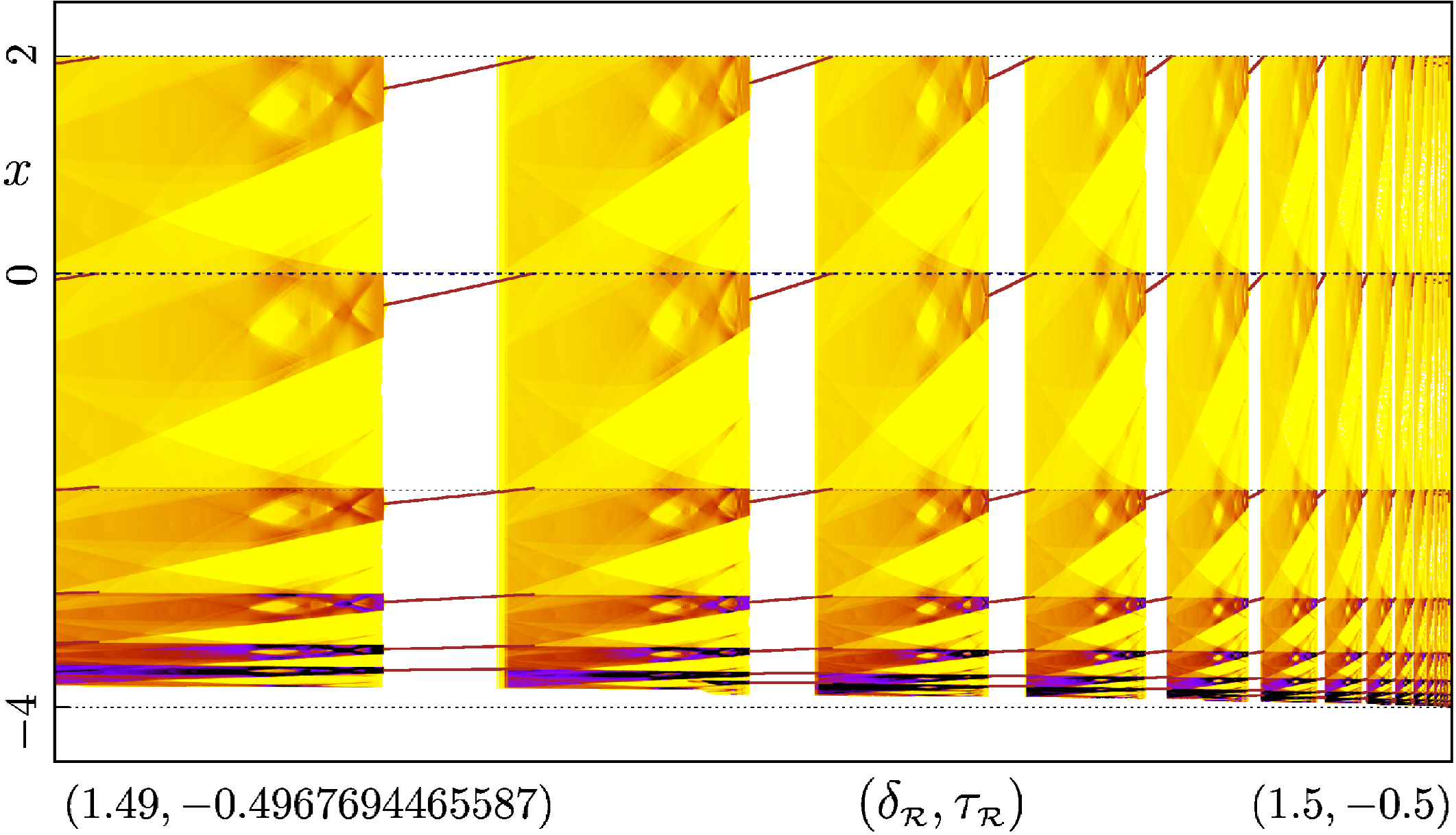}}
\put(0,3.9){{\bf a)}}
\put(8.8,3.9){{\bf b)}}
\end{picture}
\caption{
Bifurcation diagrams of $h$ (panel (a)) and $f$ (panel (b)) corresponding to the
lines D of Figs.~\ref{fig:2D:num} and \ref{fig:bifSetBCNFZoom} respectively.
Points are coloured according to density of the attractor.
For clarity panel (b) only shows points with $y < {\rm max} \left[ -\frac{3 x}{4}, 0 \right]$.
\label{fig:bandcountInc}
} 
\end{center}
\end{figure}
%%%%%%%%%%%%%%%%%%%%%%%%%%%%%%%%%%%%%%%%%%%%%%%%%%%%%%%%%%%%%%%%%%%%%%%%%%%%%%%%%%%%%%%%%%%%%%%%%%%%

%%%%%%%%%%%%%%%%%%%%%%%%%%%%%%%%%%%%%%%%%%%%%%%%%%%%%%%%%%%%%%%%%%%%%%%%%%%%%%%%%%%%%%%%%%%%%%%%%%%%
\begin{figure}[b!]
\begin{center}
\includegraphics[width=8cm]{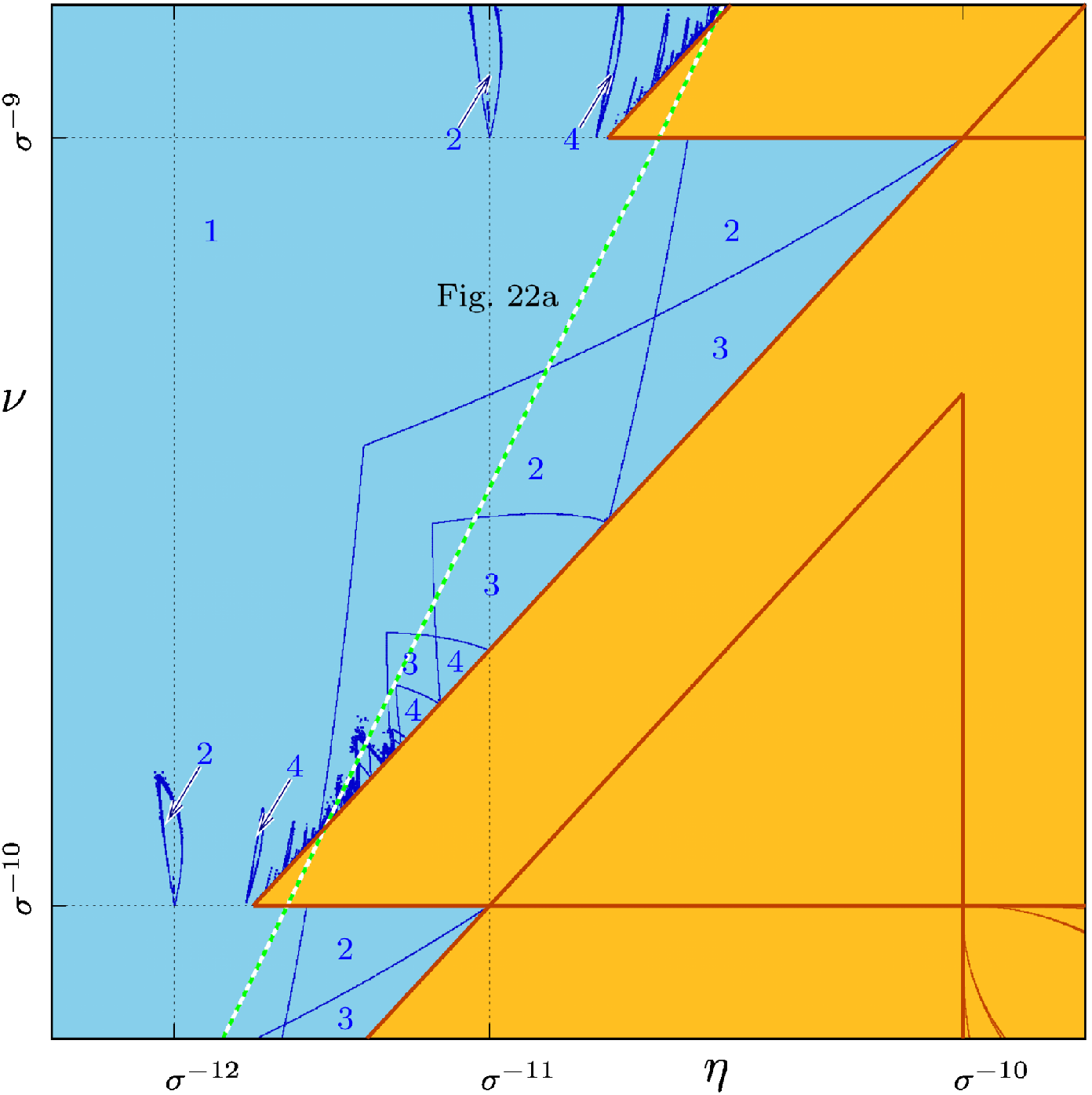}
\caption{
A magnification of the bifurcation set Fig.~\ref{fig:2D:num} of $h$ with $\sigma = 1.5$.
The blue regions are numbered by the number of connected components of the chaotic attractor.
\label{fig:U}
} 
\end{center}
\end{figure}
%%%%%%%%%%%%%%%%%%%%%%%%%%%%%%%%%%%%%%%%%%%%%%%%%%%%%%%%%%%%%%%%%%%%%%%%%%%%%%%%%%%%%%%%%%%%%%%%%%%%

To the left of the triangles $\cP_k$, $h$ has a chaotic attractor.
Again this attractor is either an interval or a union of disjoint intervals.
The number of intervals comprising the attractor changes at expansion bifurcations (described above)
and {\em merging bifurcations}.
At a merging bifurcation a fixed point $z_k^*$ of $h$ with a negative stability multiplier
has a homoclinic orbit that includes a point of discontinuity of $h|_J$.
The point of discontinuity is not necessarily an endpoint of the interval $I_k$ associated with the fixed point.
Merging bifurcations cause
orbits in the attractor to gain or lose access to a neighbourhood of the fixed point.
Each piece of $h$ has the same range with endpoints $\eta$ and $\nu$,
thus merging bifurcations occur at parameter values
for which $\eta$ or $\nu$ is a rank-$j$ preimage of a fixed point for some $j \ge 1$
(that is, $\eta$ or $\nu$ maps to a fixed point under $j$ iterations of $h$).
It is worth noticing that the merging bifurcations occurring just above the diagonal edge
of $\cP_k$ where the fixed point $z_k^*$ loses stability by attaining a stability multiplier of $-1$,
do not necessarily involve this fixed point.
The majority of the merging bifurcations involve $z_j^*$ for some $j > k$.
For example, in Fig.~\ref{fig:bandcountInc}a several merging bifurcations involving $z_{10}^*$
occur just beyond $\theta_9$ and $\theta_8$ where $z_9^*$ and $z_8^*$ lose stability.

In part of Fig.~\ref{fig:bandcountInc}a, the attractor consists of a single interval.
The roughly rhombus-shaped holes are due to two merging bifurcations
that cause the attractor to develop a gap.
In Fig.~\ref{fig:bandcountInc}a there are also narrow parameter intervals
with four-interval attractors bounded by expansion bifurcations.
These intervals are caused by {\em floating regions}
that originate at the boundary between chaotic and non-chaotic attracting dynamics,
but under parameter variation have become disconnected from this boundary \cite{AvEc13}.

In Fig.~\ref{fig:U} we also observe additional merging bifurcation curves forming numerous substructures.
The precise arrangement of the curves differs for different values of $\sigma$
because each $\cP_k$ extends further to the left of the $(\eta,\nu)$-plane as the value of $\sigma$ is increased.
Some of the merging bifurcation curves extend into $\cP_k$ where they appear to give birth to chaotic repellers.
A careful description of this behaviour is left for future work.

Fig.~\ref{fig:bandcountInc}b shows the corresponding bifurcation diagram of $f$.
As expected $f$ has a stable period-$(k+2)$ solution
at parameter values belonging to the regions $\cP_k'$, and a chaotic attractor otherwise.
However, the chaotic attractors appear to be connected sets
in places where the corresponding attractor of $h$ is a union of several disjoint intervals.
Gaps in the attractors of $h$ correspond to low density areas for the attractors of $f$.
Consequently, roughly rhombus-shaped regions are still recognisable in Fig.~\ref{fig:bandcountInc}b.

%%%%%%%%%%%%%%%%%%%%%%%%%%%%%%%%%%%%%%%%%%%%%%%%%%%%%%%%%%%%%%%%%%%%%%%%%%%%%%%%%%%%%%%%%%%%%%%%%%%%
\begin{figure}[b!]
\begin{center}
\includegraphics[width=17cm]{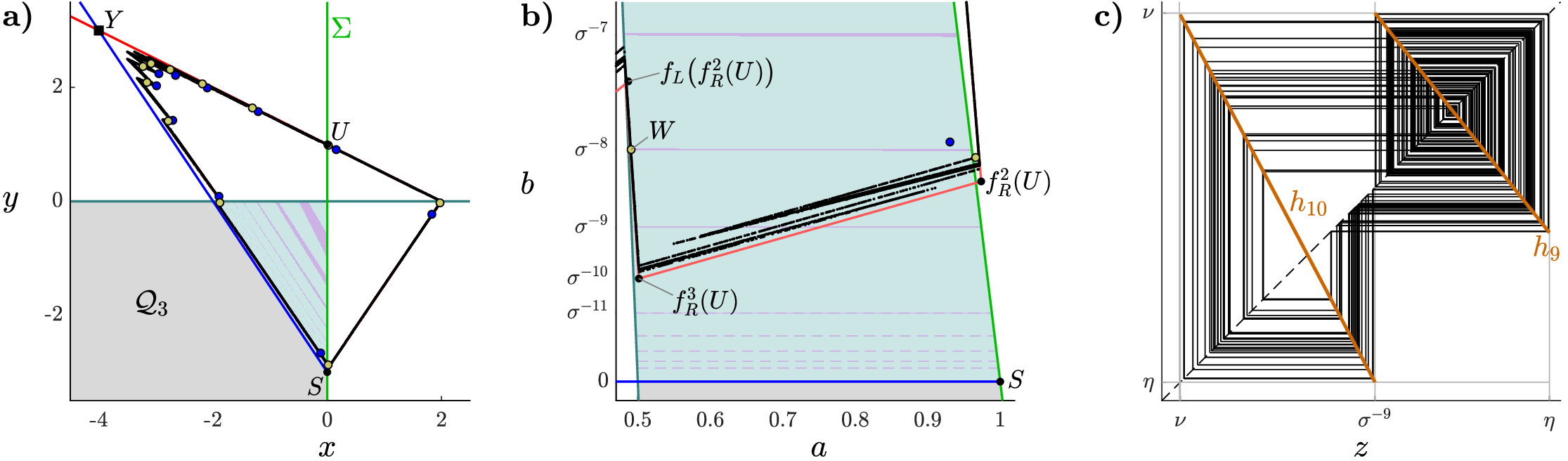}
\caption{
Panel (a) is a phase portrait of $f$ with $(\tau_L,\delta_L,\tau_R,\delta_R) = (2,0.75,-0.485,1.455)$
corresponding to the parameter point (iv) of Fig.~\ref{fig:bifSetBCNF}a.
The blue circles indicate the points of a stable $L^8 R^2$-cycle,
the yellow circles indicate the points of a saddle $L^8 R^3$-cycle,
and the numerically computed chaotic attractor is indicated in black.
Panel (b) shows part of the same plot in $(a,b)$-coordinates \eqref{eq:coordChange}.
Panel (c) is a cobweb diagram of the corresponding map $h$
which uses $\sigma = 1.5$ and $\eta = 0.01738125$ and $\nu = 0.03375$
given by \eqref{eq:eta2} and \eqref{eq:nu2}.
\label{fig:bandexB}
} 
\end{center}
\end{figure}
%%%%%%%%%%%%%%%%%%%%%%%%%%%%%%%%%%%%%%%%%%%%%%%%%%%%%%%%%%%%%%%%%%%%%%%%%%%%%%%%%%%%%%%%%%%%%%%%%%%%

To explain why this occurs, we provide a representative example in Fig.~\ref{fig:bandexB}.
Here $h$ has a two-interval attractor, panel (c),
while $f$ appears to have a connected chaotic attractor equal to the closure of the unstable set of a saddle $L^8 R^3$-cycle, panel (a).
The map $f$ also a stable $L^8 R^2$-cycle because the parameter point (iv) belongs to $\cP_8'$, see Fig.~\ref{fig:bifSetBCNF}a.

Let $W$ be the point of the saddle $L^8 R^3$-cycle that belongs to $\cQ_3$.
As we grow the unstable set of this solution outwards from $W$,
the set is linear until developing kinks near $f_R^3(U)$ and $f_L \left( f_R^2(U) \right)$.
But $\frac{\nu}{\eta} > \sigma$, so the line segment from $f_R^3(U)$ and $f_L \left( f_R^2(U) \right)$
that approximates the initial part of the unstable set
intersects at least two horizontal lines $b = \sigma^{-k}$.
Hence the image of this part of the unstable set stretches from $\Sigma$ to $f(\Sigma)$.
For this reason the 11 branches of the unstable set are intertwined,
and so their closure (the chaotic attractor of $f$) is a connected set.

%===============================================================================
\section{Summary and outlook}
\label{sec:conc}

The bifurcation structures of the two-dimensional border-collision normal form $f$ are not fully understood.
In this paper we have shown how a common structure involving periodicity and chaos
is underpinned by a three-parameter family $h$ of one-dimensional discontinuous maps.
Theorem \ref{th:main} shows that $h$ approximates $f$ functionally,
while our numerical explorations show that $h$ captures the dynamics of $f$ near
the codimension-two subsumed homoclinic connections from which $h$ is derived.

As we move away from the codimension-two points, the bifurcation structure of $f$ develops
features not exhibited by the bifurcation structure of $h$.
For example, any parameter point of $h$ belongs to at most two periodicity triangles $\cP_k$,
but this is not true for $f$ sufficiently far from the codimension-two point, e.g.~at the point (i) of Fig.~\ref{fig:bifSetBCNF}a.
The periodicity tongues forming the period-adding structure of $h$, see Fig.~\ref{fig:Y},
do not overlap due to the uniqueness of the rotation number for non-overlapping circle maps.
But for $f$, some tongues do overlap,
and we believe overlaps occur arbitrarily close to the codimension-two point, see Fig.~\ref{fig:bifSetBCNFZoom}.
Also the tongues of $f$ can develop pinch points, as in Fig.~\ref{fig:bifSetBCNFexB}a.

The main difference between the bifurcation structures of $f$ and $h$ visible in Fig.~\ref{fig:bifSetBCNF}
concerns the connectedness of chaotic attractor above the regions $\cP_k'$ and $\cP_k$.
For $h$ the attractor is disconnected in some regions, shown more clearly in Fig.~\ref{fig:U},
while for $f$ the attractor is connected but has low density
where the corresponding attractor of $h$ contains gaps, see Fig.~\ref{fig:bandcountInc}.

From an applied viewpoint these differences are minor.
The bifurcation structure that emanates from a subsumed homoclinic tangency
often extends to a large area of parameter space, as in Fig.~\ref{fig:bifSetLargeScale}.
Our main point is that the family $h$ provides a simple and striking first-order approximation
to the bifurcation structure of $f$ over large areas.

For piecewise-linear maps of dimension greater than two,
we expect the same family $h$ applies near subsumed homoclinic connections.
However, it is likely that such maps will not have any attractors near the homoclinic connections
because in more than two dimensions the unstable set of the saddle is not forced to spiral inwards, as in Fig.~\ref{fig:schem}.
Additional dimensions allow the stable and unstable sets to intersect more readily,
and these intersections destroy the attractor.

It remains to consider families of two-dimensional maps that are discontinuous
or involve more than two linear pieces, as in \cite{Si20},
and study the analogous reduction to one dimension.
It also remains to further study some details of the bifurcation structure of $h$,
as several features relating to bandcount-incrementing
involve more than two pieces of $h$ and are not accounted for by the existing two-piece theory.

%===============================================================================
\section*{Acknowledgements}

DJWS was supported by Marsden Fund contract MAU2209 managed by Royal Society Te Ap\={a}rangi.
VA was supported by the German Research Foundation within the scope of the project AV 111/3-1.
The authors thank Lukas Ott for assistance with the numerical explorations.

%=====================================================================


\begin{thebibliography}{10}

\bibitem{NuYo92}
H.E. Nusse and J.A. Yorke.
\newblock Border-collision bifurcations including ``period two to period
  three'' for piecewise smooth systems.
\newblock {\em Phys. D}, 57:39--57, 1992.

\bibitem{Si23e}
D.J.W. Simpson.
\newblock Detecting invariant expanding cones for generating word sets to
  identify chaos in piecewise-linear maps.
\newblock {\em J. Difference Eqn. Appl.}, 29:1094--1126, 2023.

\bibitem{DiBu08}
M.~di~Bernardo, C.J. Budd, A.R. Champneys, and P.~Kowalczyk.
\newblock {\em Piecewise-smooth Dynamical Systems. Theory and Applications.}
\newblock Springer-Verlag, New York, 2008.

\bibitem{Si16}
D.J.W. Simpson.
\newblock Border-collision bifurcations in $\mathbb{R}^n$.
\newblock {\em SIAM Rev.}, 58(2):177--226, 2016.

\bibitem{DiKo03}
M.~di~Bernardo, P.~Kowalczyk, and A.~Nordmark.
\newblock Sliding bifurcations: {A} novel mechanism for the sudden onset of
  chaos in dry friction oscillators.
\newblock {\em Int. J. Bifurcation Chaos}, 13(10):2935--2948, 2003.

\bibitem{SzOs08}
R.~Szalai and H.M. Osinga.
\newblock Invariant polygons in systems with grazing-sliding.
\newblock {\em Chaos}, 18(2):023121, 2008.

\bibitem{ZhMo03}
Z.T. Zhusubaliyev and E.~Mosekilde.
\newblock {\em Bifurcations and Chaos in Piecewise-Smooth Dynamical Systems.}
\newblock World Scientific, Singapore, 2003.

\bibitem{PuSu06}
T.~Puu and I.~Sushko, editors.
\newblock {\em Business Cycle Dynamics: Models and Tools.}
\newblock Springer-Verlag, New York, 2006.

\bibitem{Lo78}
R.~Lozi.
\newblock Un attracteur \'{e}trange (?) du type attracteur de {H}\'{e}non.
\newblock {\em J. Phys. (Paris)}, 39(C5):9--10, 1978.
\newblock In French.

\bibitem{AvSc12}
V.~Avrutin, M.~Schanz, and S.~Banerjee.
\newblock Occurrence of multiple attractor bifurcations in the two-dimensional
  piecewise linear normal form map.
\newblock {\em Nonlin. Dyn.}, 67:293--307, 2012.

\bibitem{AvZh16}
V.~Avrutin, Z.T. Zhusubaliyev, A.~Saha, S.~Banerjee, I.~Sushko, and L.~Gardini.
\newblock Dangerous bifurcations revisited.
\newblock {\em Int. J. Bifurcation Chaos}, 26(14):1630040, 2016.

\bibitem{BaGr99}
S.~Banerjee and C.~Grebogi.
\newblock Border collision bifurcations in two-dimensional piecewise smooth
  maps.
\newblock {\em Phys. Rev. E}, 59(4):4052--4061, 1999.

\bibitem{BaYo98}
S.~Banerjee, J.A. Yorke, and C.~Grebogi.
\newblock Robust chaos.
\newblock {\em Phys. Rev. Lett.}, 80(14):3049--3052, 1998.

\bibitem{FaSi23}
H.O. Fatoyinbo and D.J.W. Simpson.
\newblock A synopsis of the non-invertible, two-dimensional, border-collision
  normal form with applications to power converters.
\newblock {\em Int. J. Bifurcation Chaos}, 33(8):2330019, 2023.

\bibitem{PaBa02}
S.~Parui and S.~Banerjee.
\newblock Border collision bifurcations at the change of state-space dimension.
\newblock {\em Chaos}, 12(4):1054--1069, 2002.

\bibitem{SiMe08b}
D.J.W. Simpson and J.D. Meiss.
\newblock Neimark-{S}acker bifurcations in planar, piecewise-smooth, continuous
  maps.
\newblock {\em SIAM J. Appl. Dyn. Sys.}, 7(3):795--824, 2008.

\bibitem{SuAv22}
I.~Sushko, V.~Avrutin, and L.~Gardini.
\newblock {L}ozi map embedded in the {2D} border collision normal form.
\newblock {\em J. Difference Eqn. Appl.}, 29:965--981, 2022.

\bibitem{Si14}
D.J.W. Simpson.
\newblock Sequences of periodic solutions and infinitely many coexisting
  attractors in the border-collision normal form.
\newblock {\em Int. J. Bifurcation Chaos}, 24(6):1430018, 2014.

\bibitem{Si24c}
D.J.W. Simpson.
\newblock Border-collision bifurcations from stable fixed points to any number
  of coexisting chaotic attractors.
\newblock {\em J. Difference Eqn. Appl.}, 30(1):90--110, 2024.

\bibitem{Si17c}
D.J.W. Simpson.
\newblock The structure of mode-locking regions of piecewise-linear continuous
  maps: {I}. {N}earby mode-locking regions and shrinking points.
\newblock {\em Nonlinearity}, 30(1):382--444, 2017.

\bibitem{SuGa08}
I.~Sushko and L.~Gardini.
\newblock Center bifurcation for two-dimensional border-collision normal form.
\newblock {\em Int. J. Bifurcation Chaos}, 18(4):1029--1050, 2008.

\bibitem{ZhMo06b}
Z.T. Zhusubaliyev, E.~Mosekilde, S.~Maity, S.~Mohanan, and S.~Banerjee.
\newblock Border collision route to quasiperiodicity: Numerical investigation
  and experimental confirmation.
\newblock {\em Chaos}, 16(2):023122, 2006.

\bibitem{Si24d}
D.J.W. Simpson.
\newblock The necessity of the sausage-string structure for mode-locking
  regions of piecewise-linear maps.
\newblock {\em Phys. D}, 462:134142, 2024.

\bibitem{Gl17}
P.~Glendinning.
\newblock Robust chaos revisited.
\newblock {\em Eur. Phys. J. Special Topics}, 226(9):1721--1738, 2017.

\bibitem{GhSi22}
I.~Ghosh and D.J.W. Simpson.
\newblock Renormalisation of the two-dimensional border-collision normal form.
\newblock {\em Int. J. Bifurcation Chaos}, 32(12):2250181, 2022.

\bibitem{GhMc24}
I.~Ghosh, R.I. McLachlan, and D.J.W. Simpson.
\newblock The bifurcation structure within robust chaos for two-dimensional
  piecewise-linear maps.
\newblock {\em Commun. Nonlin. Sci. Numer. Simul.}, 134:108025, 2024.

\bibitem{NuYo95}
H.E. Nusse and J.A. Yorke.
\newblock Border-collision bifurcations for piecewise-smooth one-dimensional
  maps.
\newblock {\em Int. J. Bifurcation Chaos.}, 5(1):189--207, 1995.

\bibitem{SuAv16}
I.~Sushko, V.~Avrutin, and L.~Gardini.
\newblock Bifurcation structure in the skew tent map and its application as a
  border collision normal form.
\newblock {\em J. Diff. Eq. Appl.}, 22(8):1040--1087, 2016.

\bibitem{SiTu17}
D.J.W. Simpson and C.P. Tuffley.
\newblock Subsumed homoclinic connections and infinitely many coexisting
  attractors in piecewise-linear continuous maps.
\newblock {\em Int. J. Bifurcation Chaos}, 27(2):1730010, 2017.

\bibitem{Si20}
D.J.W. Simpson.
\newblock Unfolding codimension-two subsumed homoclinic connections in
  two-dimensional piecewise-linear maps.
\newblock {\em Int. J. Bifurcation Chaos}, 30(3):2030006, 2020.

\bibitem{DiKo02}
M.~di~Bernardo, P.~Kowalczyk, and A.~Nordmark.
\newblock Bifurcations of dynamical systems with sliding: Derivation of
  normal-form mappings.
\newblock {\em Phys. D}, 170:175--205, 2002.

\bibitem{Si25f}
D.J.W. Simpson.
\newblock The two-dimensional border-collision normal form with a zero
  determinant.
\newblock {\em SIAM J. Appl. Dyn. Syst.}, 24(3):2205--2245, 2025.

\bibitem{AvGa19}
V.~Avrutin, L.~Gardini, I.~Sushko, and F.~Tramontana.
\newblock {\em Continuous and Discontinuous Piecewise-Smooth One-Dimensional
  Maps.}
\newblock World Scientific, Singapore, 2019.

\bibitem{AvEc07}
V.~Avrutin, B.~Eckstein, and M.~Schanz.
\newblock On detection of multi-band chaotic attractors.
\newblock {\em Proc. R. Soc. A}, 463:1339--1358, 2007.

\bibitem{Ec06}
B.~Eckstein.
\newblock {\em Bandcounter: {C}ounting Bands of Multiband Chaotic Attractors.}
\newblock PhD thesis, University of Stuttgart, 2006.

\bibitem{KrOs05}
B.~Krauskopf, H.~Osinga, E.J. Doedel, M.E. Henderson, J.~Guckenheimer,
  A.~Vladimirsky, M.~Dellnitz, and O.~Junge.
\newblock A survey of methods for computing (un)stable manifolds of vector
  fields.
\newblock {\em Int. J. Bifurcation Chaos}, 15(3):763--791, 2005.

\bibitem{Si16b}
D.J.W. Simpson.
\newblock Unfolding homoclinic connections formed by corner intersections in
  piecewise-smooth maps.
\newblock {\em Chaos}, 26:073105, 2016.

\bibitem{Ro04}
R.C. Robinson.
\newblock {\em An Introduction to Dynamical Systems. Continuous and Discrete.}
\newblock Prentice Hall, Upper Saddle River, NJ, 2004.

\bibitem{Ru17}
S.~Ruette.
\newblock {\em Chaos on the interval.}
\newblock American Mathematical Society, Providence, RI, 2017.

\bibitem{GrAl17}
A.~Granados, L.~Alsed\`{a}, and M.~Krupa.
\newblock The period adding and incrementing bifurcations: {F}rom rotation
  theory to applications.
\newblock {\em SIAM Rev.}, 59(2):225--292, 2017.

\bibitem{Ke80}
J.~Keener.
\newblock Chaotic behavior in piecewise continuous difference equations.
\newblock {\em Trans. Amer. Math. Soc.}, 261(2):589--604, 1980.

\bibitem{BrSt91}
P.C. Bressloff and J.~Stark.
\newblock Neuronal dynamics based on discontinuous circle maps.
\newblock {\em Phys. Lett. A}, 150:187--195, 1991.

\bibitem{MaTr86}
R.S. MacKay and C.~Tresser.
\newblock Transition to topological chaos for circle maps.
\newblock {\em Phys. D}, 19(2):206--237, 1986.

\bibitem{RhTh86}
F.~Rhodes and C.L. Thompson.
\newblock Rotation numbers for monotone functions on the circle.
\newblock {\em J. London Math. Soc.}, 34:360--368, 1986.

\bibitem{RhTh91}
F.~Rhodes and C.L. Thompson.
\newblock Topologies and rotation numbers for families of monotone functions on
  the circle.
\newblock {\em J. London Math. Soc.}, 43:156--170, 1991.

\bibitem{AvZh19}
V.~Avrutin and Z.T. Zhusubaliyev.
\newblock Nested closed invariant curves in piecewise smooth maps.
\newblock {\em Int. J. Bifurcation Chaos}, 29(7):1930017, 2019.

\bibitem{Mi86b}
M.~Misiurewicz.
\newblock Rotation intervals for a class of maps of the real line into itself.
\newblock {\em Ergod. Th. \& Dynam. Sys.}, 6:117--132, 1986.

\bibitem{AvSc08}
V.~Avrutin and M.~Schanz.
\newblock On the fully developed bandcount adding scenario.
\newblock {\em Nonlinearity}, 21:1077--1103, 2008.

\bibitem{GrOt83}
C.~Grebogi, E.~Ott, and J.A. Yorke.
\newblock Crises, sudden changes in chaotic attractors, and transient chaos.
\newblock {\em Phys. D}, 7:181--200, 1983.

\bibitem{LaYo73}
A.~Lasota and J.A. Yorke.
\newblock On the existence of invariant measures for piecewise monotonic
  transformations.
\newblock {\em Trans. Amer. Math. Soc.}, 186:481--488, 1973.

\bibitem{LiYo78}
T.~Li and J.A. Yorke.
\newblock Ergodic transformations from an interval into itself.
\newblock {\em Trans. Amer. Math. Soc.}, 235(1):183--192, 1978.

\bibitem{AvEc08}
V.~Avrutin, B.~Eckstein, and M.~Schanz.
\newblock The bandcount increment scenario. {I}. {B}asic structures.
\newblock {\em Proc. R. Soc. A}, 464(2095):1867--1883, 2008.

\bibitem{AvEc08b}
V.~Avrutin, B.~Eckstein, and M.~Schanz.
\newblock The bandcount increment scenario. {II}. {I}nterior structures.
\newblock {\em Proc. R. Soc. A}, 464(2097):2247--2263, 2008.

\bibitem{AvEc09}
V.~Avrutin, B.~Eckstein, and M.~Schanz.
\newblock The bandcount increment scenario. {III}. {D}eformed structures.
\newblock {\em Proc. R. Soc. A}, 465(2101):41--57, 2009.

\bibitem{AvEc13}
V.~Avrutin, B.~Eckstein, M.~Schanz, and B.~Schenke.
\newblock Bandcount incrementing scenario revisited and floating regions within
  robust chaos.
\newblock {\em Math. Comput. Simul.}, 95:23--38, 2013.

\end{thebibliography}
\end{document}